\documentclass[11pt]{amsart}

\usepackage[utf8]{inputenc}
\usepackage[T1]{fontenc}

\usepackage{amsmath,amssymb,amsfonts,textcomp,amsthm,xifthen,graphicx,color,pgfplots}

\usepackage{enumerate}
\usepackage{enumitem}

\usepackage{fullpage}

\usepackage[utf8]{inputenc}

\usepackage[pdftex,
            pdfauthor={Thomas F\"uhrer},
            pdftitle={Multilevel decompositions and norms for negative order Sobolev spaces},
            ]{hyperref}

% Create PDF pictures on the fly
% pgfplots --> 
\usepackage{pgfplots}
% Create PDF pictures on the fly
\usepgfplotslibrary{external}
\usepgfplotslibrary{colorbrewer}
\tikzexternalize

\newtheorem{theorem}{Theorem}
\newtheorem{lemma}[theorem]{Lemma}
\newtheorem{corollary}[theorem]{Corollary}

\newtheorem{remark}[theorem]{Remark}

\newcommand{\patch}{\omega}
\newcommand{\Patch}{\Omega}

\newcommand{\II}{\mathcal{I}}
\newcommand{\HHH}{\mathcal{H}}

\newcommand{\Mmat}{\boldsymbol{M}}

\newcommand{\Bmat}{\boldsymbol{B}}
\newcommand{\Cmat}{\boldsymbol{C}}
\newcommand{\Dmat}{\boldsymbol{D}}
\newcommand{\Pmat}{\boldsymbol{P}}

\newcommand{\Hmat}{\boldsymbol{H}}
\newcommand{\Imat}{\boldsymbol{I}}
\newcommand{\Rmat}{\boldsymbol{R}}
\newcommand{\Tmat}{\boldsymbol{T}}
\newcommand{\Qmat}{\boldsymbol{Q}}

%-------------------------------------------------------------------------------
%

\newcommand{\XX}{\mathcal{X}}

\DeclareMathOperator{\gen}{gen}
\DeclareMathOperator{\linhull}{span}

\DeclareMathOperator{\supp}{supp}
\def\enorm#1{|\hspace*{-.5mm}|\hspace*{-.5mm}|#1|\hspace*{-.5mm}|\hspace*{-.5mm}|}

%-------------------------------------------------------------------------------

\newcommand{\interp}[3]{[#1,#2]_{#3}}
\newcommand{\projHmOne}{\Pi^{(-1)}}
\newcommand{\projHmOneHat}{\widehat\Pi^{(-1)}}
\newcommand{\projTildeHmOne}{\Pi^{(-1),\sim}}
\newcommand{\projTildeHmOneHat}{\widehat\Pi^{(-1),\sim}}

\newcommand{\projSZ}{J}
\newcommand{\projHone}{P}
\newcommand{\projHmOneSZ}{Q}
\newcommand{\projBubble}{B}
\newcommand{\projLtwo}{\Pi^0}
\newcommand{\projHdiv}{\boldsymbol{P}}

\newcommand{\ip}[2]{(#1\hspace*{.5mm},#2)}
\newcommand{\dual}[2]{\langle#1\hspace*{.5mm},#2\rangle}

\newcommand{\norm}[3][]{#1\|#2#1\|_{#3}}

\newcommand{\diam}{\mathrm{diam}}

\def\div{{\rm div\,}}

\def\intr{{\rm int}}

\newcommand{\Hdivset}[1]{\boldsymbol{H}(\div;#1)}
\newcommand{\HdivsetZero}[1]{\boldsymbol{H}_0(\div;#1)}

\newcommand{\set}[2]{\big\{#1\,:\,#2\big\}}

\newcommand{\RT}{\ensuremath{\mathcal{RT}}}
\newcommand{\mfun}{\ensuremath{\widetilde{m}}}
\newcommand{\mfunUp}{\ensuremath{\overline{m}}}
\newcommand{\tfun}{\ensuremath{\widetilde{t}}}

\newcommand{\R}{\ensuremath{\mathbb{R}}}
\newcommand{\N}{\ensuremath{\mathbb{N}}}

\newcommand{\TT}{\ensuremath{\mathcal{T}}}

\newcommand{\cS}{\ensuremath{\mathcal{S}}}

\newcommand{\PP}{\ensuremath{\mathcal{P}}}

\newcommand{\OO}{\ensuremath{\mathcal{O}}}
\newcommand{\EE}{\ensuremath{\mathcal{E}}}
\newcommand{\NN}{\ensuremath{\mathcal{N}}}

\newcommand{\normal}{\ensuremath{{\boldsymbol{n}}}}

%*** vector functions

\newcommand{\ppsi}{{\boldsymbol\psi}}

\newcommand{\ssigma}{{\boldsymbol\sigma}}
\newcommand{\ttau}{{\boldsymbol\tau}}

%*** bios

\newcommand{\xx}{\boldsymbol{x}}

%===================================================================================================
\begin{document}

% Title and Authors
\title{Multilevel decompositions and norms for negative order Sobolev spaces}
\date{\today}

\author{Thomas F\"uhrer}
\address{Facultad de Matem\'{a}ticas, Pontificia Universidad Cat\'{o}lica de Chile, Santiago, Chile}
\email{tofuhrer@mat.uc.cl}

\thanks{{\bf Acknowledgment.} 
This work was supported by ANID through FONDECYT projects 11170050 and 1210391.}

\keywords{additive Schwarz, multilevel norms, subspace decomposition, preconditioner}
\subjclass[2010]{65F08, %  Preconditioners for iterative methods
                 65F35, %  Matrix norms, conditioning, scaling
                 65N30, % FEM, Galerkin
                 65N38  % BEM
                 %65N12 % Stability and convergence of numerical methods
                 }
%%%%%%%%%%%%%%%%%%%%%%%%%%%%%%%%%%%%%%%%%%%%%%%%%%%%%%%%%%%%%%%%%%%%%
% ABSTRACT
\begin{abstract}
  We consider multilevel decompositions of piecewise constants on simplicial meshes that are stable in $H^{-s}$ for $s\in (0,1)$. 
  Proofs are given in the case of uniformly and locally refined meshes. 
  Our findings can be applied to define local multilevel diagonal preconditioners that lead to bounded condition numbers (independent of the mesh-sizes and levels) and have optimal computational complexity. 
  Furthermore, we discuss multilevel norms based on local (quasi-)projection operators that allow the efficient evaluation of negative order Sobolev norms. 
  Numerical examples and a discussion on several extensions and applications conclude this article. 
\end{abstract}
%%%%%%%%%%%%%%%%%%%%%%%%%%%%%%%%%%%%%%%%%%%%%%%%%%%%%%%%%%%%%%%%%%%%%
% Make Title
\maketitle
%%%%%%%%%%%%%%%%%%%%%%%%%%%%%%%%%%%%%%%%%%%%%%%%%%%%%%%%%%%%%%%%%%%%%

%===================================================================================================
% INTRODUCTION
%===================================================================================================
\section{Introduction}
This work deals with the analysis of multilevel decompositions and multilevel norms of piecewise constant functions for the Sobolev spaces $H^{-s}(\Omega)$ resp. $\widetilde H^{-s}(\Omega)$ with $s\in(0,1)$. 

Stability results for subspace decompositions are needed in the analysis of, e.g., additive and multiplicative Schwarz preconditioners, see~\cite{oswald94,ToselliWidlund} for an overview. 
An important use case is the definition of additive Schwarz preconditioners for weakly-singular integral equations~\cite{TranS_96_ASM,HeuerST_98_MAS,Heuer_01_ApS}. The two dimensional case (1 dimensional boundary) follows from corresponding results in Sobolev spaces of opposite (and therefore positive) order whereas the higher dimensional case needs different techniques. 
Another application is given in fictitious domain methods~\cite{BerroneBonitoStevenson19}.
In our recent work~\cite{ABEMsolve} we have defined a multilevel diagonal preconditioner for the weakly-singular integral operator which is optimal on locally refined meshes and closed boundaries for three-dimensional problems.
The proofs are based on the abstract framework from~\cite{oswald99}.
Although verified numerically, optimality for open boundaries is not shown in~\cite{ABEMsolve}. 
Moreover, it is not clear if the techniques given in~\cite{ABEMsolve} extend to the general case with $s\in(0,1)$. 
In the recent work~\cite{BaerlandKuchtaMardal19}, additive multigrid methods are analyzed for problems involving the fractional Laplacian leading to level dependent condition number estimates.

A different approach is the framework of operator preconditioning, see~\cite{Hiptmair06} for an overview. 
One advantage is that the history of meshes is usually not needed. 
One drawback is the use of dual meshes and discretized operators of opposite order which often is computationally expensive.
The latter issues have been tackled in a series of recent articles~\cite{StevensonVanVenetie2020mathComp,StevensonVanVenetie2020posOrder,StevensonVanVenetie2020negOrder}.

Multilevel norms for negative order Sobolev spaces and piecewise constant functions have been analyzed in~\cite{Oswald98} but do not lead to level independent estimates for $s\geq 1/2$. 
Multilevel norms for piecewise affine and globally continuous functions are found in, e.g.,~\cite{BornemannYserentant,BramblePasciakVassilevski00,Oswald98}, see also the recent article~\cite{FaustmannMelenk21}.
Multigrid methods are analyzed in, e.g.,~\cite{WuChen06,HiptmairWuZheng2012,CNX12,CHX13}.
Other works that use a matrix-based approach to treat the evaluation of fractional Sobolev norms include~\cite{Burstedde07,ArioliLoghin09} but rely on the evaluation of fractional powers of non-trivial matrices or the use of wavelet bases. 
Wavelet techniques are used for boundary element methods, see, e.g.,~\cite{SchmidlinSchwab02,HarbrechtSchneider04,DHS07}.
More details on the theory of (pre-)wavelet methods are found in, e.g.,~\cite{Stevenson96,Stevenson98,CTU99}.

%%%%%%
%
% SOME RESULTS
% %%%%%%%%%%%%
\subsection{Some known results on multilevel norms}\label{sec:knownresults:multilevel}
We recall some results on multilevel norms from~\cite{Oswald98} adopted to the notation used in the present work.
Let $(\TT_\ell)_{\ell\in\N_0}$ denote a sequence of uniformly refined simplicial meshes with mesh sizes $(h_\ell)_{\ell\in\N_0}$.
For the range $s\in(0,1/2)$ one gets from~\cite[Eq.(3)]{Oswald98} that
\begin{align}\label{eq:mlnL2}
  \norm{\phi}{-s}^2 \simeq \norm{\phi}{-s,\sim}^2 \simeq \sum_{\ell=0}^\infty h_\ell^{2s}\norm{(\projLtwo_\ell-\projLtwo_{\ell-1})\phi}{}^2,
\end{align}
where $\projLtwo_\ell$ denotes the $L^2(\Omega)$ orthogonal projection on the space of piecewise constants $\PP^0(\TT_\ell)$. 
The multilevel norm can be efficiently evaluated if $\phi\in\PP^0(\TT_L)$. 

However, the latter equivalence does not include one of the arguably most important cases, $s=1/2$. 
It is shown (\cite[Theorem~2]{Oswald98}) that
\begin{align*}
  \norm{\phi}{-1/2}^2 \lesssim \sum_{\ell=0}^L h_\ell \norm{(\projLtwo_\ell-\projLtwo_{\ell-1})\phi}{}^2
  \lesssim (L+1)^2 \norm{\phi}{-1/2}^2 \quad\text{for all }\phi \in \PP^0(\TT_L)
\end{align*}
and the factor $(L+1)^2$ can not be improved in general, thus, yielding suboptimal results.

\subsection{Novel contributions}
Rather than using duality arguments to transfer results for positive order Sobolev spaces to negative order spaces (see e.g.~\cite[Section~2]{Oswald98}), 
we exploit the deep connection between interpolation and approximation spaces where we view $H^{-s}(\Omega)$ as an intermediate space between $H^{-1}(\Omega)$ and $L^2(\Omega)$. 
At first glance this seems to further complicate the problem due to the necessity of handling the $H^{-1}(\Omega)$ norm.
However, some ideas resp. results from our recent work~\cite{FuehrerHeuerQuasiDiagonal19} and the work at hand show how to define local decompositions in the $H^{-1}(\Omega)$ norm by using Haar-type functions that can be written as the divergence of Raviart--Thomas functions. 
Furthermore, we introduce and analyze locally defined projection operators in $H^{-1}(\Omega)$ onto the space of piecewise constant functions. 
We note that for problems of positive order the Sobolev space $H^s(\Omega)$ is an intermediate space between $L^2(\Omega)$ and $H^t(\Omega)$ for some $t\in (1,3/2)$. Together with stability of the canonical basis of piecewise affine and globally continuous functions in $L^2(\Omega)$ this yields optimality of the BPX preconditioner, see, e.g.,~\cite{Bornemann94,BornemannYserentant} and references therein for a detailed analysis.

Let us summarize two of our main results that can be found in Section~\ref{sec:main} and are valid for uniform as well as adaptive meshes:
Let $(\TT_\ell)_{\ell\in\N_0}$ denote a sequence of meshes with mesh-size functions $(h_\ell)_{\ell\in\N_0}$ and facets $(\EE_\ell)_{\ell\in\N_0}$. 
For $\ell\in\N_0$, $E\in\EE_\ell$ we define spaces $\XX_{\ell,E} = \linhull\{\psi_{\ell,E}\}\in\PP^0(\TT_\ell)$ where $\psi_{\ell,E}$ is supported on at most two elements of $\TT_\ell$ (which share the facet $E$) and $\int_\Omega \psi_{\ell,E} \,\mathrm{d}x = 0$ if $E\not\subset \partial\Omega$ (a precise definition is found in Section~\ref{sec:discretespaces}) and use subsets $\widetilde\EE_\ell\subset\EE_\ell$ that satisfy $\#\widetilde\EE_\ell=\OO(\#\TT_\ell\setminus\TT_{\ell-1})$. 
\begin{itemize}
  \item \textbf{Multilevel decomposition (Theorem~\ref{thm:main}):}  
    \begin{align*}
      \norm{\phi}{-s}^2 \simeq \inf\set{\sum_{\ell=0}^L \sum_{E\in\widetilde\EE_\ell} \norm{\phi_{\ell,E}}{-s}^2}{\phi_{\ell,E}\in \XX_{\ell,E} \text{ such that } 
      \phi = \sum_{\ell=0}^L \sum_{E\in\widetilde\EE_\ell} \phi_{\ell,E}}
    \end{align*}
    for all $\phi\in\PP^0(\TT_L)$.
  \item \textbf{Multilevel norms (Theorem~\ref{thm:multilevelnorm}):} There exist locally defined operators $\projHone_\ell'$ such that
    \begin{align*}
      \norm{\phi}{-s}^2 \simeq \sum_{\ell=0}^L \norm{h_\ell^s(\projHone_\ell'-\projHone_{\ell-1}')\phi}{}^2 \quad\text{for all }\phi \in \PP^0(\TT_L).
    \end{align*}
\end{itemize}
In particular, the constants are independent of the levels or the mesh-sizes. We note that the decomposition into one-dimensional subspaces implies that the associated preconditioner is of a multilevel diagonal scaling type. 
% Moreover, we stress that the evaluation of the preconditioner costs $\OO(\#\TT_L)$ operations and its storage requirement is of order $\OO(\#\TT_L)$. 
% The computational cost for the evaluation of the multilevel norm is also linear in the number of elements of the finest grid. 

\subsection{Outline}
In Section~\ref{sec:prel} we introduce notation and some basic results. In particular, Section~\ref{sec:projHmOneSZ} and Section~\ref{sec:projHmOneSZtilde} deal with the definition and analysis of local projection operators in negative order Sobolev spaces. 
In Section~\ref{sec:main} we present our main results and their proofs are given in Section~\ref{sec:proof}.
Numerical experiments are presented in Section~\ref{sec:experiments}.
The final Section~\ref{sec:conclusion} concludes this article with a discussion on applications and extensions including the case of higher-order polynomial spaces. 

\subsection{Notation}
Throughout this work we write $a\lesssim b$ resp. $a\gtrsim b$ if there exists a constant $C>0$ such that $a\leq C b$ resp. $a\geq C b$. 
If both directions hold we write $a\simeq b$. 
In the main results dependencies on constants will be specified.

\section{Preliminaries}\label{sec:prel}

\subsection{Sobolev spaces} \label{sec:sobolev}
For a bounded Lipschitz domain $\omega\subset \R^d$ ($d\geq 2$) let 
\begin{align*}
  H^1(\omega) = \set{v\in L^2(\omega)}{\nabla u\in L^2(\omega)}
\end{align*}
with norm $\norm{\cdot}{H^1(\omega)} = \big(\norm{\cdot}{\omega}^2 + \norm{\nabla(\cdot)}{\omega}^2\big)^{1/2}$.
Here, $\norm{\cdot}{\omega}$ denotes the $L^2(\omega)$ or $L^2(\omega)^d$ norm which is induced by the scalar product $\ip{\cdot}{\cdot}_\omega$. 
Let $H_0^1(\omega)$ denote the closed subspace of $H^1(\omega)$ with vanishing traces and recall that $\norm{\nabla(\cdot)}\omega$ defines an equivalent norm on $H_0^1(\omega)$.
The dual space $H^{-1}(\omega):=(H_0^1(\omega))'$ is equipped with the dual norm
\begin{align*}
  \norm{\phi}{-1,\omega} := \sup_{0\neq v\in H_0^1(\omega)} \frac{\dual{\phi}{v}_\omega}{\norm{\nabla v}{\omega}},
\end{align*}
where the duality $\dual\cdot\cdot_\omega$ is understood with respect to the extended $L^2(\omega)$ inner product. 
Analogously, $\widetilde H^{-1}(\omega) := (H^1(\omega))'$ with norm
\begin{align*}
  \norm{\phi}{-1,\sim,\omega} := \sup_{0\neq v\in H^1(\omega)} \frac{\dual{\phi}{v}_\omega}{\big(\norm{\nabla v}{\omega}^2+\norm{v}{\omega}^2\big)^{1/2}}.
\end{align*}

Throughout this work we consider a connected Lipschitz domain $\emptyset\neq\Omega\subset\R^d$ with boundary $\Gamma:=\partial\Omega$ 
and skip indices in the notation of norms, e.g., we write $\norm{\cdot}{}$ instead of $\norm{\cdot}{\Omega}$, $\norm{\cdot}{-1}$ instead of $\norm{\cdot}{-1,\Omega}$.

For $s\in(0,1)$ we define the intermediate spaces $H^s(\Omega)$ resp. $\widetilde H^{s}(\Omega)$ by real interpolation ($K$-method, see, e.g.~\cite[Section~4]{CWHM15}), i.e., 
\begin{align*}
  H^s(\Omega) &:= \interp{L^2(\Omega)}{H^1(\Omega)}{s,2} \quad\text{resp.}\\
  \widetilde H^s(\Omega) &:= \interp{L^2(\Omega)}{H_0^1(\Omega)}{s,2}.
\end{align*}
We recall that the dual spaces $\widetilde H^{-s}(\Omega) := (H^s(\Omega))'$ resp. $H^{-s}(\Omega) := (\widetilde H^s(\Omega))'$ can be written as interpolation spaces as well, i.e.,
\begin{align*}
  \widetilde H^{-(1-\theta)}(\Omega) &= \interp{\widetilde H^{-1}(\Omega)}{L^2(\Omega)}{\theta,2}, \\
  H^{-(1-\theta)}(\Omega) &= \interp{H^{-1}(\Omega)}{L^2(\Omega)}{\theta,2}
\end{align*}
for all $\theta \in (0,1)$, see, e.g.,~\cite[Chapter~1, Theorem~6.2]{LionsMagenesI}.
An extensive overview on interpolation between Hilbert spaces and, in particular, Sobolev spaces is found in~\cite{CWHM15}.

The space of $L^2(\omega)^d$ fields with divergence in $L^2(\omega)$ is denoted by $\Hdivset\omega$.
Also note that $\div\colon L^2(\Omega)^d\to H^{-1}(\Omega)$ is a bounded operator. 

\subsection{Meshes and refinement}\label{sec:mesh}
Let $\TT$ denote a regular mesh of $\Omega$ consisting of open simplices, i.e., $\overline\Omega = \bigcup_{T\in\TT}\overline{T}$.
With $h_T := \diam(T)$ we define the mesh-size function $h_\TT$ by $h_\TT|_T := h_T$.
The set of all $d-1$ facets of an element $T\in\TT$ is denoted by $\EE(T)$. For our studies we also use the sets
\begin{align*}
  \EE_\TT := \bigcup_{T\in\TT} \EE(T), \quad \EE_\TT^\Gamma := \set{E\in\EE_\TT}{E\subset \Gamma}, 
  \quad \EE_\TT^\Omega := \EE_\TT\setminus\EE_\TT^\Gamma.
\end{align*}
The set of vertices of an element $T\in\TT$ is denoted with $\NN(T)$, and
\begin{align*}
  \NN_\TT := \bigcup_{T\in\TT} \NN(T), \quad \NN_\TT^\Gamma := \set{z\in\NN_\TT}{z\in\Gamma}, 
  \quad \NN_\TT^\Omega := \NN_\TT\setminus\NN_\TT^\Gamma.
\end{align*}
In this work we consider sequences of meshes $(\TT_\ell)_{\ell\in\N_0}$ where we assume that $\TT_{\ell+1}$ is generated from $\TT_\ell$ by refining certain (or all) elements.
A common type of mesh refinement is, e.g., \emph{newest vertex bisection}, see, e.g.,~\cite{Stevenson08} and references therein.
The generation $\gen(T)$ of an element $T\in \bigcup_{\ell\in\N_0} \TT_\ell$ denotes the number of iterated refinements (bisections) to obtain $T\in\TT_\ell$ from a father element $T'\in\TT_0$.
We assume, given a suitable initial mesh $\TT_0$, that the sequence $\TT_0,\TT_1,\dots$ generated by the mesh refinement strategy satisfies:
\begin{enumerate}[label=(A\arabic*)]
  \item\label{ass:mesh:reg} Shape regularity: There exists a constant $C_\mathrm{reg}>0$ such that
    \begin{align*}
      \sup_{\ell\in\N_0} \sup_{T\in\TT_\ell} \frac{\diam(T)^d}{|T|} \leq C_\mathrm{reg}.
    \end{align*}
  \item\label{ass:mesh:ref} There exists $q_\mathrm{ref}\in(0,1)$ and $C_\mathrm{ref}>0$ such that
    \begin{align*}
      C_\mathrm{ref}^{-1} h_T \leq q_\mathrm{ref}^{\gen(T)} \leq C_\mathrm{ref} h_T \quad\text{for all } T\in\bigcup_{\ell\in\N_0} \TT_\ell.
    \end{align*}
  \item\label{ass:mesh:gen} There exists a constant $k_\mathrm{ref}\in\N$ such that 
    for all $\ell\in\N_0$ and all $T\in\TT_{\ell+1}\setminus\TT_\ell$ with unique father element $T_F\in\TT_\ell$,
    \begin{align*}
      1\leq |\gen(T)-\gen(T_F)| \leq k_\mathrm{ref}.
    \end{align*}
\end{enumerate}
These assumptions are satisfied for, e.g., the newest vertex bisection, see~\cite{Stevenson08} and~\cite{GallistlSchedensackStevenson14}.

We say that $(\TT_m)_{m\in\N_0}$ is a sequence of uniform meshes if (besides the assumptions from above)
\begin{itemize}
  \item $\TT_{m+1}\setminus\TT_{m} = \TT_{m+1}$ and $\gen(T)=\gen(T')$ for all $T,T'\in \TT_m$ and $m\in\N_0$.
\end{itemize}
Moreover, we assume that for a sequence of meshes $(\TT_\ell)_{\ell\in\N_0}$ there exists a sequence of uniform meshes $(\widehat\TT_m)_{m\in\N_0}$ with $\widehat\TT_0=\TT_0$ satisfying~\ref{ass:mesh:reg}--\ref{ass:mesh:gen} with the same constants.

From the assumptions given in this section we can interpret the mesh-size functions of uniform meshes as constants.
\begin{lemma}\label{lem:meshgenunif}
  Let $(\TT_\ell)_{\ell\in\N_0}$ denote a sequence of uniform meshes, then
  \begin{align*}
    \gen(T) \simeq \ell \quad\text{and}\quad
    h_T\simeq q_\mathrm{ref}^\ell \quad\text{for all } T\in\TT_\ell, \,\ell\in\N_0.
  \end{align*}
  The constants involved only depend on the constants $C_\mathrm{ref}$, $k_\mathrm{ref}$ from ~\ref{ass:mesh:ref}--\ref{ass:mesh:gen}.
\end{lemma}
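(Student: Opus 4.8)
The plan is to unwind the two assumptions \ref{ass:mesh:ref} and \ref{ass:mesh:gen} together with the definition of a uniform sequence. First I would establish $\gen(T)\simeq\ell$. Fix $T\in\TT_\ell$ and let $T'\in\TT_0$ be its ancestor, so $\gen(T)$ counts the bisections along the chain $T_0=T'\subset\cdots\subset$ (with $T_j\in\TT_j$ after $j$ steps of the mesh sequence leading to $T_\ell=T$). By \ref{ass:mesh:gen} each mesh step changes the generation by at least $1$ and at most $k_\mathrm{ref}$, so summing over the $\ell$ steps of the chain gives $\ell\leq\gen(T)-\gen(T')\leq k_\mathrm{ref}\,\ell$. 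Since $\TT_0$ is a fixed mesh, $\gen(T')$ is bounded by a constant depending only on $\TT_0$ (indeed for a uniform sequence $\gen(T')$ is the same for every $T'\in\TT_0$), hence $\gen(T)\simeq\ell$ with implied constants depending only on $k_\mathrm{ref}$ and $\TT_0$; note the lower bound $\gen(T)\geq\ell$ is immediate and for $\ell=0$ the statement reads $\gen(T)\simeq 0$ which we interpret in the usual additive-constant sense.

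For the second claim I would simply feed $\gen(T)\simeq\ell$ into assumption \ref{ass:mesh:ref}. That assumption gives $C_\mathrm{ref}^{-1}h_T\leq q_\mathrm{ref}^{\gen(T)}\leq C_\mathrm{ref}h_T$, i.e. $h_T\simeq q_\mathrm{ref}^{\gen(T)}$. Using $\gen(T)=c_1\ell+\OO(1)$ with $c_1\in[1,k_\mathrm{ref}]$ and $q_\mathrm{ref}\in(0,1)$, we get $q_\mathrm{ref}^{\gen(T)}=q_\mathrm{ref}^{\OO(1)}\,(q_\mathrm{ref}^{\gen(T)/\ell})^{\ell}$; here one must be slightly careful because the exponent ratio $\gen(T)/\ell$ is not literally constant. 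The clean way is: from $\ell\leq\gen(T)\leq k_\mathrm{ref}\ell + C$ and $0<q_\mathrm{ref}<1$ one has $q_\mathrm{ref}^{k_\mathrm{ref}\ell+C}\leq q_\mathrm{ref}^{\gen(T)}\leq q_\mathrm{ref}^{\ell}$, so $q_\mathrm{ref}^{\gen(T)}\simeq q_\mathrm{ref}^{\ell}$ would be false in general — instead one should phrase the conclusion as $h_T\simeq q_\mathrm{ref}^{\ell}$ meaning there are constants $C,c>0$ and $q,\tilde q\in(0,1)$ with $c\,q^\ell\leq h_T\leq C\,\tilde q^\ell$; but the standard reading in this literature (and what is used later) is the two-sided comparison with a single base, which does hold once one also invokes that for a \emph{uniform} mesh all elements of $\TT_\ell$ share the same generation, so in fact $\gen(T)$ is a deterministic function of $\ell$ and the ratio stabilises.

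Concretely, for a uniform sequence, set $g(\ell):=\gen(T)$ for any (hence every) $T\in\TT_\ell$. By \ref{ass:mesh:gen}, $g(\ell+1)-g(\ell)\in\{1,\dots,k_\mathrm{ref}\}$, so $g$ is strictly increasing with $\ell\leq g(\ell)\leq g(0)+k_\mathrm{ref}\ell$, giving $g(\ell)\simeq\ell$. Then \ref{ass:mesh:ref} yields $h_T\simeq q_\mathrm{ref}^{g(\ell)}$, and since $\ell\leq g(\ell)\leq k_\mathrm{ref}\ell+g(0)$ we obtain
\begin{align*}
  C_\mathrm{ref}^{-1}q_\mathrm{ref}^{g(0)+k_\mathrm{ref}\ell}\leq h_T\leq C_\mathrm{ref}\,q_\mathrm{ref}^{\ell},
\end{align*}
which is the asserted $h_T\simeq q_\mathrm{ref}^\ell$ up to absorbing $q_\mathrm{ref}^{g(0)}$ into the constant and the (harmless, since the bound is only used as a comparison) replacement of the base on the lower side — in applications one works with whichever of the two equivalent bases is convenient, or simply writes $h_\ell:=q_\mathrm{ref}^\ell$ and notes $h_T\simeq h_\ell$. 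I expect the only real subtlety here to be this bookkeeping about whether the comparison is with a single base $q_\mathrm{ref}^\ell$ or with two different geometric rates; the mesh-refinement content is entirely in \ref{ass:mesh:ref}–\ref{ass:mesh:gen} and is routine.
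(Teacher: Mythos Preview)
Your approach is the same as the paper's, which simply states that the lemma follows from Assumptions~\ref{ass:mesh:gen} and~\ref{ass:mesh:ref}; you have merely unpacked those two lines in detail. The subtlety you raise about the lower bound involving $q_\mathrm{ref}^{k_\mathrm{ref}\ell}$ rather than $q_\mathrm{ref}^{\ell}$ is a fair observation that the paper's one-line proof does not address, but it is inconsequential for every later use of the lemma (only geometric decay at some fixed rate is ever needed), so you can safely record the two-sided bound and move on.
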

\begin{proof}
  Follows from Assumption~\ref{ass:mesh:gen} and Assumption~\ref{ass:mesh:ref}.
\end{proof}

% We note the following property: 
% Given $(\TT_\ell)_{\ell\in\N_0}$ and a sequence of uniform meshes $(\widehat\TT_m)_{m\in\N_0}$ with $\widehat\TT_0 = \TT_0$, then, 
% there exists a constant $k_\mathrm{unif}\in\N_0$ such that for each $T\in \bigcup_{\ell\in\N_0}\TT_\ell$ there exist $m',m''\in\N_0$ and $T'\in\widehat\TT_{m'}$, $T''\in\widehat\TT_{m''}$ with 
%     \begin{itemize}
%       \item $T$ is a refinement of $T'$ and $T''$ is a refinement of $T$ and $|\gen(T')-\gen(T'')|\leq k_\mathrm{unif}$.
%     \end{itemize}
% We use this property to relate locally refined meshes with uniform meshes. 

Let $\TT$ be a regular mesh. Element patches are given by
\begin{align*}
  \patch_\TT(S) &:= \set{T\in\TT}{\overline{T}\cap \overline S \neq \emptyset} \quad\text{for some } S\subseteq \overline\Omega, \\
  \patch_\TT(z) &:= \patch_\TT(\{z\}) \quad\text{for some } z\in\overline\Omega.
\end{align*}
The corresponding domains are denoted with $\Omega_\TT(S)$ and $\Omega_\TT(z)$. 
Higher-order patches are denoted with an additional superscript, e.g., $\patch_\TT^{(2)}(S) = \patch_\TT(\Omega_\TT(S))$.

\subsection{Discrete spaces and projections}\label{sec:discretespaces}
For $T\in\TT$ we denote with $\PP^p(T)$ the space of polynomials of degree $\leq p\in\N_0$ and set
\begin{align*}
  \PP^p(\TT) := \set{v\in L^2(\Omega)}{v|_T \in \PP^p(T) \text{ for all } T\in\TT}.
\end{align*}
Furthermore,
\begin{align*}
  \cS^1(\TT) := \PP^1(\TT)\cap H^1(\Omega) \quad\text{and}\quad
  \cS_0^1(\TT) := \PP^1(\TT)\cap H_0^1(\Omega).
\end{align*}
The space $\cS^1(\TT)$ is equipped with the common basis $\set{\eta_{\TT,z}}{z\in\NN_\TT}$ where $\eta_{\TT,z}(z') = \delta_{z,z'}$ for all $z,z'\in\NN_\TT$.
Here, $\delta_{z,z'}$ denotes the Kronecker-$\delta$ symbol.

We make use of the $L^2(\Omega)$ orthogonal projection $\Pi^p_\TT\colon L^2(\Omega)\to \PP^p(\TT)$ and the $H^{-1}(\Omega)$ orthogonal projection $\projHmOne_\TT \colon H^{-1}(\Omega)\to \PP^0(\TT)$ resp. the $\widetilde H^{-1}(\Omega)$ orthogonal projection $\projTildeHmOne_\TT\colon \widetilde H^{-1}(\Omega)\to \PP^0(\TT)$.
It is well-known that these operators satisfy the approximation properties
\begin{align*}
  \norm{(1-\Pi^p_\TT)\phi}{-1} + \norm{(1-\Pi^p_\TT)\phi}{-1,\sim} 
  + \norm{(1-\projHmOne_\TT)\phi}{-1} + \norm{(1-\projTildeHmOne_\TT)\phi}{-1,\sim} &\lesssim \norm{h_\TT \phi}{}
\end{align*}
for all $\phi\in L^2(\Omega)$, which follow from duality and Poincar\'e inequalities.

Recall the inverse estimates
\begin{align}\label{eq:invest}
  \norm{h_\TT^s \phi}{} &\lesssim \norm{\phi}{-s} \lesssim \norm{\phi}{-s,\sim}
\end{align}
for all $\phi\in\PP^p(\TT)$ and $s\in[0,1]$, see, e.g.,~\cite[Theorem~3.6]{ghs05}, as well as the local variant
\begin{align*}
  h_T \norm{\phi}{T} &\lesssim \norm{\phi}{-1,T} \quad\text{for all } \phi\in \PP^p(T) \text{ and }T\in\TT,
\end{align*}
which follows from a scaling argument.
For a uniform mesh (recall that $h_\TT$ is constant) the interpolation inequality~\cite[Chapter~1, Proposition~2.8]{LionsMagenesI} and the inverse estimate~\eqref{eq:invest} show
\begin{align*}
  \norm{\phi}{-s} \lesssim \norm{\phi}{-1}^{s}\norm{\phi}{}^{1-s} \lesssim h_\TT^{-1+s}\norm{\phi}{-1}
  \quad\text{and}\quad
  \norm{\phi}{-s,\sim} \lesssim \norm{\phi}{-1,\sim}^{s}\norm{\phi}{}^{1-s} \lesssim h_\TT^{-1+s}\norm{\phi}{-1,\sim}
\end{align*}
for all $\phi\in \PP^p(\TT)$.

For the decompositions we use Haar-type functions which can be written as the divergence of Raviart--Thomas functions. Let $\RT^p(\TT)$ denote the space of Raviart--Thomas functions of order $p\in\N_0$.
For each $E\in\EE_\TT^\Omega$ there exist unique elements $T_E^\pm\in\TT$ with $E = \intr(\overline T_E^+\cap \overline T_E^-)$.
For each $E\in\EE_\TT^\Gamma$ there exists a unique element $T_E^+\in\TT$ with $E = \intr(\overline T_E^+ \cap \Gamma)$ and we set $T_E^-:=\emptyset$. 
Let $\chi_T$ denote the characteristic function of an element $T\in\TT$ and let $\set{\ppsi_{\TT,E}}{E\in\TT}$ be the canonical local basis of $\RT^0(\TT)$ with
$\supp \ppsi_{\TT,E} \subseteq \overline T^+ \cup \overline T^-$, vanishing normal trace on $\EE\setminus\{E\}$ and
\begin{align*}
  \psi_{\TT,E} := \div\ppsi_{\TT,E} = \frac{|E|}{|T_E^+|}\chi_{T_E^+} - \frac{|E|}{|T_E^-|}\chi_{T_E^-}.
\end{align*}
Here, $|E|$ denotes the surface measure of the facet $E\in\EE_\TT$ and we set $1/|T_E^-|=0$ for $E\in\EE_\TT^\Gamma$.
From~\cite[Lemma~1]{FuehrerHeuerQuasiDiagonal19} we recall some scaling properties, where $h_E:=\diam(E)$.
\begin{lemma}\label{lem:scaling}
  The equivalences
  \begin{alignat*}{2}
    \norm{\psi_{\TT,E}}{-1} &\leq \norm{\ppsi_{\TT,E}}{} \simeq h_E \norm{\psi_{\TT,E}}{} \lesssim \norm{\psi_{\TT,E}}{-1}
    &\quad&\text{for all } E\in\EE_\TT, \\
    \norm{\psi_{\TT,E}}{-1,\sim} &\leq \norm{\ppsi_{\TT,E}}{} \simeq h_E \norm{\psi_{\TT,E}}{} \lesssim \norm{\psi_{\TT,E}}{-1,\sim}
    &\quad&\text{for all } E\in\EE_\TT^\Omega
  \end{alignat*}
  hold and the involved constants only depend on the shape regularity of $\TT$ and the dimension $d$.
\end{lemma}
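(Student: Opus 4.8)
The plan is to establish the displayed chain by proving, separately, the two ``outer'' inequalities $\norm{\psi_{\TT,E}}{-1}\le\norm{\ppsi_{\TT,E}}{}$ (and its $\widetilde H^{-1}$ analogue) and $h_E\norm{\psi_{\TT,E}}{}\lesssim\norm{\psi_{\TT,E}}{-1}$ (and its analogue), together with the purely algebraic middle equivalence $\norm{\ppsi_{\TT,E}}{}\simeq h_E\norm{\psi_{\TT,E}}{}$.

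I would start with the upper bounds, which come from writing $\psi_{\TT,E}=\div\ppsi_{\TT,E}$ and integrating by parts. Note that $\ppsi_{\TT,E}$, extended by zero outside $T_E^+\cup T_E^-$, belongs to $\Hdivset{\Omega}$ and is supported on $\overline{T_E^+}\cup\overline{T_E^-}$. For any $v\in H_0^1(\Omega)$ one has $\dual{\psi_{\TT,E}}{v}=\dual{\div\ppsi_{\TT,E}}{v}=-\ip{\ppsi_{\TT,E}}{\nabla v}$ (no boundary contribution since $v$ vanishes on $\Gamma$), hence $|\dual{\psi_{\TT,E}}{v}|\le\norm{\ppsi_{\TT,E}}{}\norm{\nabla v}{}$ and therefore $\norm{\psi_{\TT,E}}{-1}\le\norm{\ppsi_{\TT,E}}{}$. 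For $E\in\EE_\TT^\Omega$ the normal trace of $\ppsi_{\TT,E}$ vanishes on $\Gamma$ (it vanishes on every facet of $T_E^\pm$ other than $E$), so the same identity is valid for every $v\in H^1(\Omega)$, and using $\norm{\nabla v}{}\le(\norm{\nabla v}{}^2+\norm{v}{}^2)^{1/2}$ gives $\norm{\psi_{\TT,E}}{-1,\sim}\le\norm{\ppsi_{\TT,E}}{}$. This is precisely the step where $E$ must be an interior facet: for $E\in\EE_\TT^\Gamma$ the boundary term $\int_E v$ survives and this constant-one estimate is no longer available.

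For the middle equivalence I would just compute. On each $T\in\{T_E^+,T_E^-\}$ one has $\psi_{\TT,E}|_T=\pm|E|/|T|$ and $\ppsi_{\TT,E}|_T(x)=\pm\tfrac{|E|}{d|T|}(x-z_{E,T})$ with $z_{E,T}$ the vertex of $T$ opposite $E$. Integrating these explicit expressions and invoking shape regularity~\ref{ass:mesh:reg} — which yields $|E|\simeq h_E^{d-1}$, $|T|\simeq h_E^d$ and $h_T\simeq h_E$ for $T\in\{T_E^+,T_E^-\}$ — one obtains $\norm{\ppsi_{\TT,E}}{}^2\simeq h_E^d$ and $\norm{\psi_{\TT,E}}{}^2\simeq h_E^{d-2}$, whence $\norm{\ppsi_{\TT,E}}{}\simeq h_E\norm{\psi_{\TT,E}}{}$ with constants depending only on $C_\mathrm{reg}$ and $d$.

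Finally, for the lower bounds I would use the local inverse estimate $h_T\norm{\phi}{T}\lesssim\norm{\phi}{-1,T}$ (recalled in Section~\ref{sec:discretespaces}) applied to the constant $\phi=\psi_{\TT,E}|_{T_E^+}\in\PP^0(T_E^+)$ on $T=T_E^+$. Since any $v\in H_0^1(T_E^+)$ extended by zero lies in $H_0^1(\Omega)$ and then pairs with $\psi_{\TT,E}$ only via $\psi_{\TT,E}|_{T_E^+}$, while its gradient norm is unchanged, we have $\norm{\psi_{\TT,E}}{-1,T_E^+}\le\norm{\psi_{\TT,E}}{-1}$; combined with $h_{T_E^+}\simeq h_E$ and $\norm{\psi_{\TT,E}}{T_E^+}\simeq\norm{\psi_{\TT,E}}{}$ (shape regularity) this gives $h_E\norm{\psi_{\TT,E}}{}\lesssim\norm{\psi_{\TT,E}}{-1}$. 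For the tilde-norm the lower bound then follows from the elementary ordering $\norm{\psi_{\TT,E}}{-1}\le\norm{\psi_{\TT,E}}{-1,\sim}$ (also recalled in Section~\ref{sec:discretespaces}). Concatenating the three steps closes both chains. There is no real conceptual obstacle; the only point that needs a bit of care is the bookkeeping of constants in the middle step, where one must make sure that every equivalence rests solely on shape regularity and the dimension, the essential inputs being the measure comparisons $|E|\simeq h_E^{d-1}\simeq h_{T_E^+}^{d-1}\simeq h_{T_E^-}^{d-1}$ for a shared facet and its two adjacent elements.
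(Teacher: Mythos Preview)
Your proof is correct. The paper does not actually supply a proof of this lemma; it merely recalls the statement from~\cite[Lemma~1]{FuehrerHeuerQuasiDiagonal19}, so there is no in-paper argument to compare against. Your three-step approach (integration by parts for the upper bound via $\psi_{\TT,E}=\div\ppsi_{\TT,E}$, explicit scaling for the middle equivalence, and the local inverse estimate $h_T\norm{\phi}{T}\lesssim\norm{\phi}{-1,T}$ for the lower bound) is the natural one and matches what one would expect the cited proof to do. One small remark: your final step for the $\widetilde H^{-1}$ lower bound invokes $\norm{\psi_{\TT,E}}{-1}\le\norm{\psi_{\TT,E}}{-1,\sim}$, which the paper indeed records in Section~\ref{sec:discretespaces}; if one wanted a fully self-contained argument independent of that inequality, one could instead test directly with the element bubble $\eta_{b,T_E^+}$ extended by zero and use the local Poincar\'e inequality on $T_E^+$.
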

Let $s\in(0,1)$. Lemma~\ref{lem:scaling} together with interpolation and inverse estimates from above show that
\begin{alignat*}{2}
  \norm{\psi_{\TT,E}}{-s} &\simeq h_E^s \norm{\psi_{\TT,E}}{} &\quad&\text{for all } E\in\EE_\TT, \\
  \norm{\psi_{\TT,E}}{-s,\sim} &\simeq h_E^s \norm{\psi_{\TT,E}}{} &\quad&\text{for all } E\in\EE_\TT^\Omega.
\end{alignat*}

In the recent work~\cite{egsv2019} a local projection operator onto the Raviart--Thomas space has been defined that does not rely on regularity assumptions (the canonical Raviart--Thomas projection requires that $\ssigma\in H^t(\Omega)^d\cap\Hdivset\Omega$ with some $t>1/2$ so that the normal trace $\ssigma\cdot\normal|_E$ is well-defined in $L^2(E)$).
\begin{lemma}[{\cite[Theorem~3.2]{egsv2019}}]\label{lem:Hdivproj}
  Let $\emptyset\neq \widetilde\patch \subseteq \TT$ with $\widetilde\Patch = \intr(\bigcup_{T\in\widetilde\patch}\overline{T})$ a connected domain be given and set
  \begin{align*}
    \HdivsetZero{\widetilde\Patch} := \set{\ttau\in \Hdivset{\widetilde\Patch}}{\ttau\cdot\normal = 0 \text{ on }\partial\widetilde\Patch\setminus\Gamma}.
  \end{align*}
  There exists an operator $\projHdiv^p_{\widetilde\patch}\colon \HdivsetZero{\widetilde\Patch} \to \RT^p(\widetilde\patch)\cap \HdivsetZero{\widetilde\Patch}$ which satisfies
  \begin{alignat}{2}
    \projHdiv^p_{\widetilde\patch} \ssigma &= \ssigma &\quad&\text{for all } \ssigma \in \HdivsetZero{\widetilde\Patch}\cap \RT^p(\widetilde\patch), \\
    \div\projHdiv^p_{\widetilde\patch} \ssigma &= \Pi^0_{\widetilde\patch} \div\ssigma &\quad& \text{for all }\ssigma \in \HdivsetZero{\widetilde\Patch}.
  \end{alignat}
  Moreover,
  \begin{align}
    \norm{\projHdiv^p_{\widetilde\patch} \ssigma}T^2 &\lesssim \norm{\ssigma}{\Patch_{\widetilde\omega}(T)}^2 + 
    \norm{h_\TT(1-\Pi^p_{\widetilde\patch})\div\ssigma}{\Patch_{\widetilde\omega}(T)}^2
  \end{align}
  for all $T\in\widetilde\patch$ and $\ssigma\in\HdivsetZero{\widetilde\Patch}$.
  The involved constant only depends on the space dimension $d$, $p\in\N_0$ and the shape regularity of $\TT$. 
\end{lemma}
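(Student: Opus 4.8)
This lemma is quoted from \cite[Theorem~3.2]{egsv2019}, so nothing genuinely new has to be shown; let me nevertheless describe the route I would take, namely the \emph{averaging-plus-divergence-correction} construction on the sub-mesh $\widetilde\patch$. In a first step one applies the element-wise $L^2(\widetilde\Patch)$-orthogonal projection $\boldsymbol{\Pi}^{\mathrm{br}}$ onto the broken space $\prod_{T\in\widetilde\patch}\RT^p(T)$: this needs no regularity of $\ssigma$, acts strictly element by element, and reproduces $\RT^p(\widetilde\patch)$. In a second step one restores $H(\div)$-conformity by a facet averaging operator $\boldsymbol{A}$ which, on each facet of $\widetilde\patch$, replaces the two one-sided normal moments of $\boldsymbol{\Pi}^{\mathrm{br}}\ssigma$ by their mean and keeps them equal to zero on the facets lying on $\partial\widetilde\Patch\setminus\Gamma$ (admissible because $\ssigma\cdot\normal=0$ there); then $\boldsymbol{A}\boldsymbol{\Pi}^{\mathrm{br}}\ssigma\in\RT^p(\widetilde\patch)\cap\HdivsetZero{\widetilde\Patch}$, it depends on $\ssigma$ only through facet patches, and it still reproduces $\RT^p(\widetilde\patch)\cap\HdivsetZero{\widetilde\Patch}$. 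In a third step one corrects the divergence: the piecewise polynomial residual $r:=\Pi^p_{\widetilde\patch}\div\ssigma-\div\bigl(\boldsymbol{A}\boldsymbol{\Pi}^{\mathrm{br}}\ssigma\bigr)$ is split, by means of the hat-function partition of unity $\{\eta_z\}_{z\in\NN_{\widetilde\patch}}$, into local pieces $r_z$ supported on the vertex star $\patch_z:=\patch_{\widetilde\patch}(z)$ (with domain $\Patch_z$) with $\sum_z r_z=r$ and vanishing mean $\int_{\Patch_z}r_z=0$ on interior stars; on each star one then solves the small mixed (minimal $L^2$-norm, prescribed divergence) problem for $\ssigma_z\in\RT^p(\patch_z)$ with $\ssigma_z\cdot\normal=0$ on $\partial\Patch_z\setminus\Gamma$ and $\div\ssigma_z=r_z$, and sets $\boldsymbol{C}\ssigma:=\sum_z\ssigma_z$ and $\projHdiv^p_{\widetilde\patch}\ssigma:=\boldsymbol{A}\boldsymbol{\Pi}^{\mathrm{br}}\ssigma+\boldsymbol{C}\ssigma$.

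Granting this construction, the two identities of the lemma are immediate: by definition $\div\projHdiv^p_{\widetilde\patch}\ssigma=\div\bigl(\boldsymbol{A}\boldsymbol{\Pi}^{\mathrm{br}}\ssigma\bigr)+\sum_z r_z=\Pi^p_{\widetilde\patch}\div\ssigma$, and if $\ssigma\in\RT^p(\widetilde\patch)\cap\HdivsetZero{\widetilde\Patch}$ then $\boldsymbol{\Pi}^{\mathrm{br}}$ and $\boldsymbol{A}$ act as the identity, hence $r=\Pi^p_{\widetilde\patch}\div\ssigma-\div\ssigma=0$, so $\boldsymbol{C}\ssigma=0$ and $\projHdiv^p_{\widetilde\patch}\ssigma=\ssigma$. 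Well-posedness and $L^2$-stability of each star problem amount to the discrete inf--sup (Babu\v{s}ka--Brezzi) condition for the pair $\RT^p(\patch_z)\times\PP^p(\patch_z)$ on a shape-regular star; its constant --- like the operator norms of $\boldsymbol{\Pi}^{\mathrm{br}}$ and $\boldsymbol{A}$, and the constant in $\norm{\ssigma_z}{\Patch_z}\lesssim\norm{h_\TT r_z}{\Patch_z}$ --- depends only on $d$, $p$ and the shape-regularity of $\TT$; the constraint set is nonempty precisely because $r_z$ has vanishing mean on a star with pure essential (normal) boundary, while on the remaining stars the free $\Gamma$-portion removes that obstruction.

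The main obstacle is the element-local estimate $\norm{\projHdiv^p_{\widetilde\patch}\ssigma}T^2\lesssim\norm{\ssigma}{\Patch_{\widetilde\omega}(T)}^2+\norm{h_\TT(1-\Pi^p_{\widetilde\patch})\div\ssigma}{\Patch_{\widetilde\omega}(T)}^2$. Locality of the right-hand side is structural: $\boldsymbol{\Pi}^{\mathrm{br}}$ is element-wise, $\boldsymbol{A}$ sees only facet patches, and each $\ssigma_z$ is supported on $\overline{\Patch_z}$ with $\bigcup_{z\in\NN(T)}\Patch_z=\Patch_{\widetilde\omega}(T)$, so $\projHdiv^p_{\widetilde\patch}\ssigma|_T$ depends on $\ssigma$ only through $\Patch_{\widetilde\omega}(T)$. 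For the quantitative part one would estimate $\norm{\boldsymbol{A}\boldsymbol{\Pi}^{\mathrm{br}}\ssigma-\ssigma}T$ by the scaled normal jumps of $\boldsymbol{\Pi}^{\mathrm{br}}\ssigma$ across the facets of $T$; these equal the jumps of $(\boldsymbol{\Pi}^{\mathrm{br}}-1)\ssigma$, since the normal jumps of $\ssigma$ itself vanish, and are therefore dominated by $\norm{(\boldsymbol{\Pi}^{\mathrm{br}}-1)\ssigma}{\Patch_{\widetilde\omega}(T)}\le\norm{\ssigma}{\Patch_{\widetilde\omega}(T)}$ via trace and inverse estimates, while $\boldsymbol{C}\ssigma$ on $T$ is bounded by $\sum_{z\in\NN(T)}\norm{h_\TT r_z}{\Patch_z}\lesssim\norm{h_\TT r}{\Patch_{\widetilde\omega}(T)}$. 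This already yields the asserted bound with the crude data term $\norm{h_\TT\div\ssigma}{\Patch_{\widetilde\omega}(T)}$ in place of $\norm{h_\TT(1-\Pi^p_{\widetilde\patch})\div\ssigma}{\Patch_{\widetilde\omega}(T)}$. The genuinely delicate point --- the technical heart of \cite{egsv2019}, and one I would not expect to circumvent --- is the sharpening: one must show that the polynomial part of $\div\ssigma$ carried by $r$ is produced by $\boldsymbol{A}\boldsymbol{\Pi}^{\mathrm{br}}\ssigma$ up to a quantity already dominated by $\norm{\ssigma}{\Patch_{\widetilde\omega}(T)}$, so that only the non-polynomial residual $(1-\Pi^p_{\widetilde\patch})\div\ssigma$ survives in the data term, and this relies on a careful commutation-type analysis of the averaging step together with the star-by-star compatibility bookkeeping at boundary vertices.
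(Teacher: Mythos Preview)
The paper does not give its own proof of this lemma; it is stated with a citation to \cite[Theorem~3.2]{egsv2019} and used as a black box. You correctly recognise this and go beyond the paper by sketching the construction. Your outline --- element-wise projection, facet averaging to restore conformity, and a divergence correction via local mixed problems on vertex stars --- is a sound route to such an operator and is close in spirit to the construction in \cite{egsv2019}; there is nothing in the paper to compare it against.

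Two minor remarks. First, the lemma as printed has $\div\projHdiv^p_{\widetilde\patch}\ssigma=\Pi^0_{\widetilde\patch}\div\ssigma$, whereas you (correctly, for an $\RT^p$ projector) write $\Pi^p_{\widetilde\patch}$; the appearance of $\Pi^p$ in the local estimate and the use only of $p=0$ later in the paper suggest the $\Pi^0$ is a typo and your version is the intended one. Second, your honest flagging of the sharpened data term $\norm{h_\TT(1-\Pi^p_{\widetilde\patch})\div\ssigma}{\Patch_{\widetilde\omega}(T)}$ as the delicate point is appropriate: in the actual \cite{egsv2019} construction this sharpening comes essentially for free because the element-local step is a \emph{constrained} minimisation (minimise $\norm{\ssigma-\ttau}T$ subject to $\div\ttau=\Pi^p_T\div\ssigma$) rather than a plain $L^2$ projection, so the divergence is already matched element-wise before averaging and only the averaging perturbation --- controlled by $\norm{\ssigma}{\Patch_{\widetilde\omega}(T)}$ alone --- needs correcting. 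With that modification of your first step the ``crude'' and ``sharp'' bounds coincide and the commutation-type analysis you worry about is avoided.
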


Finally, for a sequence $(\TT_\ell)_{\ell\in\N_0}$ we use the indices $\ell$ instead of $\TT_\ell$ in the notation of the corresponding operators, patches etc., e.g., $\projLtwo_\ell$ instead of $\projLtwo_{\TT_\ell}$ and $\patch_\ell(z)$ instead of $\patch_{\TT_\ell}(z)$.
Furthermore, we define operators with negative indices to be trivial, e.g., $\projLtwo_k := 0$ for $k<0$.

%%%%%%%%%%%%%%%%%%%%%%%%%%%%%
% LOCAL PROJECTION OPERATOR
%%%%%%%%%%%%%%%%%%%%%%%%%%%%%
\subsection{A local projection operator in $H^{-1}(\Omega)$}\label{sec:projHmOneSZ}
In this section we define a local projection operator $\projHmOneSZ_\TT\colon H^{-1}(\Omega)\to \PP^0(\TT)$ which plays a crucial role in the stability analysis that follows. 
We give a brief overview of the basic idea: First, we consider a Fortin operator based on the quasi-interpolation operator~\cite{SZ_90}. Then, we study its adjoint operator and, finally, we use the canonical $L^2(\Omega)$ projection (on piecewise constants) to define $\projHmOneSZ_\TT$. 

We consider the following variant: For each $z\in\NN_\TT$ let $\emptyset\neq \gamma_z\subseteq\patch_\TT(z)$ and set
\begin{align*}
  \projSZ_\TT v = \sum_{z\in\NN_\TT^\Omega} \alpha_{\TT,z} \eta_{\TT,z} :=  \sum_{z\in\NN_\TT^\Omega} \sum_{T\in\gamma_z} \frac{|T|\ip{v}{\psi_{T,z}}_T}{\sum_{T'\in\gamma_z} |T'|} \eta_{\TT,z},
\end{align*}
where $\psi_{T,z}\in\PP^1(T)$ is the unique element with $\ip{\psi_{T,z}}{\eta_{\TT,z'}}_T = \delta_{z,z'}$ for all $z,z'\in \NN(T)$. 
Some comments are in order:
\begin{remark}\label{rem:SZ}
  It is common to define the quasi-interpolation operator $\projSZ_\TT$ with $\gamma_z$ containing exactly one element.
  The case $\gamma_z = \patch_\TT(z)$ has been used in various works, cf.~\cite{HiptmairWuZheng2012,StevensonVanVenetie2020negOrder,WuZheng2017}.
  We note that the authors of~\cite{WuZheng2017} define the operator with coefficients
  \begin{align*}
    \alpha_{\TT,z} = \sum_{T\in\patch_\TT(z)} \frac{|T|\Pi_T^1 v(z)}{\sum_{T'\in\patch_\TT(z)} |T'|},
  \end{align*}
  where $\Pi_T^1$ denotes the $L^2(T)$ orthogonal projection on $\PP^1(T)$.
  This definition is identical to the operator above with $\gamma_z = \patch_\TT(z)$ which can be seen from the identity
    \begin{align*}
      \ip{v}{\psi_{T,z}}_T = \ip{v}{\Pi_T^1 \psi_{T,z}}_T = \ip{\Pi_T^1 v}{\psi_{T,z}}_T = (\Pi_T^1v)(z).
    \end{align*}
    Explicit representations of the coefficients are obtained by straightforward computations and symmetry arguments (see~\cite[Remark~3.3]{StevensonVanVenetie2020negOrder} or~\cite{HiptmairWuZheng2012} for the case $d=3$). 
    Extending the functions $\varphi_{T,z}$ by $0$ one can show that (denoting with $\Gamma_z$ the domain associated to $\gamma_z$)
    \begin{align*}
      \sum_{T\in\gamma_z} \frac{|T|}{\sum_{T'\in\gamma_z}|T'|} \psi_{T,z}|_{\Gamma_z} = \frac{1}{|\Gamma_z|}(\alpha\eta_{\TT,z}+\beta)|_{\Gamma_z} \in \PP^1(\gamma_z)
    \end{align*}
    The constants $\alpha$,$\beta$ only depend on the space dimension $d\geq 1$ but are independent of $z$ and $\gamma_z$.
\end{remark}
For the remainder of this work we use $\gamma_z = \patch_\TT(z)$ in the definition of $\projSZ_\TT$. 
The next lemma collects some well-known results on $\projSZ_\TT$ (see~\cite[Lemma~3.2--3.6]{WuZheng2017}):
\begin{lemma}\label{lem:SZ}
  The operator $\projSZ_\TT\colon L^2(\Omega)\to \cS_0^1(\TT)$ satisfies:
  \begin{enumerate}[label=(\alph*)]
    \item \textbf{Projection:} $\projSZ_\TT^2 = \projSZ_\TT$.        
    \item \textbf{Local boundedness:} 
      %\begin{alignat*}{2}
      $\norm{\projSZ_\TT v}{T} \lesssim \norm{v}{\Patch_\TT(T)}$ resp. 
      $\norm{\nabla\projSZ_\TT v}{T} \lesssim \norm{\nabla v}{\Patch_\TT(T)}$ 
      \\
      for all $v\in L^2(\Omega)$ resp. $v\in H_0^1(\Omega)$ and $T\in\TT$.
      %v\in L^2(\Omega), T\in\TT, \\
     %  &\quad&\text{for all } v\in H_0^1(\Omega), T\in\TT.
      %\end{alignat*}
    \item \textbf{Local approximation:}
        $\norm{v-\projSZ_\TT v}T \lesssim h_T \norm{\nabla v}{\Patch_\TT(T)}$ for all $v\in H_0^1(\Omega), T\in\TT$.
  \end{enumerate}
  The involved constants only depend on $d$ and shape regularity of $\TT$.
\end{lemma}

Let $\cS^{b}(\TT)\subset H_0^1(\Omega)$ denote the space of bubble functions with basis functions $\eta_{b,T} = \prod_{z\in\NN(T)} \eta_{\TT,z}$ for $T\in\TT$
and set $\cS_{0}^{1,b} = \cS_0^1(\TT) \oplus \cS^b(\TT)$.
The operator defined in the next result is a Fortin-type operator (below this is called orthogonality property).
\begin{lemma}\label{lem:projHone}
  The operator $\projHone_\TT \colon L^2(\Omega) \to \cS_0^{1,b}(\TT)$ defined by
  \begin{align*}
    \projHone_\TT v := \projSZ_\TT v + \projBubble_\TT (1-\projSZ_\TT)v 
    := \projSZ_\TT v + \sum_{T\in\TT} \frac{\ip{v-\projSZ_\TT v}1_T}{\ip{\eta_{b,T}}1_T} \eta_{b,T} 
  \end{align*}
  has the following properties:
  \begin{enumerate}[label=(\alph*)]
    \item \label{prop:projHone:proj} \textbf{Quasi-projection:}
        $\projHone_\TT v = v \quad\text{for all } v\in \cS_0^{1}(\TT)$. 
    \item\label{prop:projHone:orth} \textbf{Orthogonality:}
      $\ip{(1-\projHone_\TT)v}1_T = 0$ for all $T\in\TT$ and $v\in L^2(\Omega)$.
    \item\label{prop:projHone:app} \textbf{Local approximation property:} 
      \begin{align*}
        \norm{(1-\projHone_\TT)v}{T} \lesssim h_T \norm{\nabla v}{\Patch_\TT(T)} \quad\text{for all } T\in\TT \text{ and } v\in H_0^1(\Omega).
      \end{align*}
    \item\label{prop:projHone:bound} \textbf{Locally bounded:}
      \begin{alignat*}{2}
        \norm{\projHone_\TT v}{T} &\lesssim \norm{v}{\Patch_\TT(T)} 
        &\quad&\text{for all } T\in\TT \text{ and }v\in L^2(\Omega), \\
        \norm{\nabla \projHone_\TT v}{T} &\lesssim \norm{\nabla v}{\Patch_\TT(T)} &\quad&\text{for all } T\in\TT \text{ and }v\in H_0^1(\Omega).
      \end{alignat*}
  \end{enumerate}
  The involved constants only depend on $d$ and shape regularity of $\TT$.
\end{lemma}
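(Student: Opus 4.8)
The plan is to verify the four properties in order, leveraging the corresponding properties of $\projSZ_\TT$ from Lemma~\ref{lem:SZ} and the explicit bubble-correction structure of $\projHone_\TT$. First, for the quasi-projection property~\ref{prop:projHone:proj}: if $v\in\cS_0^1(\TT)$, then $\projSZ_\TT v = v$ by Lemma~\ref{lem:SZ}(a), so $(1-\projSZ_\TT)v = 0$ and hence $\projBubble_\TT(1-\projSZ_\TT)v = 0$; therefore $\projHone_\TT v = v$. Second, for the orthogonality property~\ref{prop:projHone:orth}: fix $T\in\TT$ and compute $\ip{(1-\projHone_\TT)v}1_T = \ip{(1-\projSZ_\TT)v}1_T - \ip{\projBubble_\TT(1-\projSZ_\TT)v}1_T$. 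Writing $w := (1-\projSZ_\TT)v$, the bubble term restricted to $T$ equals $\frac{\ip{w}1_T}{\ip{\eta_{b,T}}1_T}\eta_{b,T}$ (the other bubbles $\eta_{b,T'}$ vanish on $T$), so $\ip{\projBubble_\TT w}1_T = \frac{\ip{w}1_T}{\ip{\eta_{b,T}}1_T}\ip{\eta_{b,T}}1_T = \ip{w}1_T$, which cancels the first term. This uses only that $\eta_{b,T}$ is supported in $\overline T$ and has strictly positive integral over $T$.

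Third, for the local approximation property~\ref{prop:projHone:app}: I would write $(1-\projHone_\TT)v = (1-\projSZ_\TT)v - \projBubble_\TT(1-\projSZ_\TT)v$ and bound the two pieces on $T$. The first piece is controlled by Lemma~\ref{lem:SZ}(c): $\norm{(1-\projSZ_\TT)v}T \lesssim h_T\norm{\nabla v}{\Patch_\TT(T)}$. For the bubble correction on $T$, using the explicit coefficient and a Cauchy--Schwarz estimate together with the scaling $\ip{\eta_{b,T}}1_T \simeq |T|$ and $\norm{\eta_{b,T}}T \simeq |T|^{1/2}$ (standard for the product of barycentric coordinates, with constants depending only on $d$ and shape-regularity), one gets $\norm{\projBubble_\TT(1-\projSZ_\TT)v}T \lesssim |T|^{-1/2}\,|\ip{(1-\projSZ_\TT)v}1_T| \lesssim \norm{(1-\projSZ_\TT)v}T$, which is again bounded by $h_T\norm{\nabla v}{\Patch_\TT(T)}$. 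Adding the two bounds gives the claim. Fourth, for local boundedness~\ref{prop:projHone:bound}: the $L^2$ bound follows from $\norm{\projHone_\TT v}T \leq \norm{\projSZ_\TT v}T + \norm{\projBubble_\TT(1-\projSZ_\TT)v}T \lesssim \norm{v}{\Patch_\TT(T)} + \norm{(1-\projSZ_\TT)v}T \lesssim \norm{v}{\Patch_\TT(T)}$, using Lemma~\ref{lem:SZ}(b) and the triangle inequality. For the $H^1$-seminorm bound, note $\nabla\projHone_\TT v = \nabla\projSZ_\TT v + \nabla\projBubble_\TT(1-\projSZ_\TT)v$; the first term is handled by Lemma~\ref{lem:SZ}(b), and for the bubble term I apply an inverse estimate $\norm{\nabla\eta_{b,T}}T \lesssim h_T^{-1}\norm{\eta_{b,T}}T$ to reduce to the already-established $L^2$ bound on $\projBubble_\TT(1-\projSZ_\TT)v$, giving $\norm{\nabla\projBubble_\TT(1-\projSZ_\TT)v}T \lesssim h_T^{-1}\norm{(1-\projSZ_\TT)v}T \lesssim \norm{\nabla v}{\Patch_\TT(T)}$ by Lemma~\ref{lem:SZ}(c).

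The only genuine obstacle is making the constants in the bubble estimates uniform in $\TT$: one must check that $\ip{\eta_{b,T}}1_T$, $\norm{\eta_{b,T}}T$ and $\norm{\nabla\eta_{b,T}}T$ scale as $|T|$, $|T|^{1/2}$ and $h_T^{-1}|T|^{1/2}$ respectively, with constants depending only on $d$ and $C_\mathrm{reg}$. This follows from a scaling argument to a reference simplex, where $\eta_{b,\Tref}$ is a fixed polynomial, combined with shape-regularity~\ref{ass:mesh:reg}; it is routine but is the one place where the mesh assumptions actually enter. Everything else is bookkeeping with the triangle inequality and the already-stated properties of $\projSZ_\TT$.
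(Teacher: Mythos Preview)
Your proposal is correct and follows essentially the same approach as the paper's proof: both arguments verify (a)--(d) in order by combining the Scott--Zhang properties of Lemma~\ref{lem:SZ} with the bubble scalings $\ip{\eta_{b,T}}1_T\simeq|T|$, $\norm{\eta_{b,T}}T\simeq|T|^{1/2}$, $\norm{\nabla\eta_{b,T}}T\simeq h_T^{-1}|T|^{1/2}$, using the same split $(1-\projHone_\TT)v = (1-\projSZ_\TT)v - \projBubble_\TT(1-\projSZ_\TT)v$ for (c) and the inverse estimate on the bubble for the $H^1$-bound in (d). Your explicit remark about uniformity of the bubble constants via reference-element scaling is a welcome clarification that the paper treats more tersely.
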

\begin{proof}
  \textbf{Proof of~\ref{prop:projHone:proj}.} 
  Using the projection property, i.e., $\projSZ_\TT v= v$ for $v\in \cS_0^1(\TT)$, we infer that
  \begin{align*}
    \projHone_\TT v = \projSZ_\TT v + \projBubble_\TT(1-\projSZ_\TT)v
    = \projSZ_\TT v = v \quad\text{for all } v\in \cS_0^1(\TT).
  \end{align*}

  \noindent
  \textbf{Proof of~\ref{prop:projHone:orth}.}
  This follows from the construction of the operator, i.e., 
  \begin{align*}
    \ip{\projHone_\TT v}1_T = \ip{\projSZ_\TT v}{1}_T + \frac{\ip{v-\projSZ_\TT v}1_T}{\ip{\eta_{b,T}}1_T} \ip{\eta_{b,T}}1_T = \ip{v}1_T
    \quad\text{for all } v\in L^2(\Omega).
  \end{align*}

  \noindent
  \textbf{Proof of~\ref{prop:projHone:app}.}
  With the local approximation property $\norm{v-\projSZ_\TT v}{T} \lesssim h_T \norm{\nabla v}{\Patch_\TT(T)}$ (Lemma~\ref{lem:SZ}) and $\norm{\eta_{b,T}}T \simeq |T|^{1/2} \simeq |T|^{-1/2} \ip{\eta_{b,T}}1_T$  we infer that
  \begin{align*}
    \norm{v-\projHone_\TT v}T \leq \norm{v-\projSZ_\TT v}T + \norm{v-\projSZ_\TT v}T \frac{|T|^{1/2}}{\ip{\eta_{b,T}}1_T} \norm{\eta_{b,T}}T \simeq \norm{v-\projSZ_\TT v}T \lesssim h_T \norm{\nabla v}{\Patch_\TT(T)}.
  \end{align*}

  \noindent
  \textbf{Proof of~\ref{prop:projHone:bound}.}
  The local $L^2$ bound for the operator $\projSZ_\TT$ (Lemma~\ref{lem:SZ}) together with the scaling arguments that we have used before implies
  \begin{align*}
    \norm{\projHone_\TT v}T \lesssim \norm{\projSZ_\TT v}T + \norm{(1-\projSZ_\TT)v}T \lesssim \norm{v}{\Patch_\TT(T)}.
  \end{align*}
  Finally, the local $H^1$ bound follows from a corresponding bound for $\projSZ_\TT$, scaling arguments, and the local approximation property, i.e.
  \begin{align*}
    \norm{\nabla \projHone_\TT v}T &\leq \norm{\nabla \projSZ_\TT v}T + \norm{(1-\projSZ_\TT)v}T \frac{|T|^{1/2}}{\ip{\eta_{b,T}}1_T} \norm{\nabla \eta_{b,T}}T 
    \\ &\simeq \norm{\nabla \projSZ_\TT v}T + h_T^{-1} \norm{(1-\projSZ_\TT)v}T \lesssim \norm{\nabla v}{\Patch_\TT(T)},
  \end{align*}
  which concludes the proof.
\end{proof}

With $\projHone_\TT' \colon L^2(\Omega)\to L^2(\Omega)$ we denote the adjoint of $\projHone_\TT$.
Writing $\projHone_\TT = \projSZ_\TT + \projBubble_\TT(1-\projHone_\TT)$ we see
\begin{align*}
  \projHone_\TT' = \projSZ_\TT' + (1-\projSZ_\TT')\projBubble_\TT',
\end{align*}
where
\begin{align*}
  \projSZ_\TT'v &= \sum_{z\in\NN_\TT} \ip{v}{\eta_{\TT,z}} \varphi_{\TT,z} \quad\text{and}\quad
  \projBubble_\TT' v= \sum_{T\in\TT}  \frac{\ip{v}{\eta_{b,T}}_T}{\ip{\eta_{b,T}}1_T} \chi_T.
\end{align*}
Note that by Remark~\ref{rem:SZ} we have that $\varphi_{\TT,z} = 1/|\Omega_\TT(z)|(\alpha \eta_{\TT,z}+\beta)$ on $\Omega_\TT(z)$ and $0$ otherwise.
From this explicit representation it is thus straightforward to see that $\projHone_\TT' \phi\in \PP^1(\TT)$ for all $\phi\in L^2(\Omega)$. We will use this fact in order to apply (local) inverse estimates, e.g. $\norm{h_\TT\projHone_\TT'\phi}{} \lesssim \norm{\projHone_\TT'\phi}{-1}$.
Moreover, note that $\projHone_\TT$ is bounded in $H_0^1(\Omega)$ so that $\projHone_\TT'\colon H^{-1}(\Omega)\to \PP^1(\TT)$ is well-defined.

The next result follows more or less immediately from the properties described in Lemma~\ref{lem:projHone}. 
\begin{lemma}\label{lem:projHoneDual}
  The operator $\projHone_\TT'$ satisfies the following properties:
  \begin{enumerate}[label=(\alph*)]
    \item \textbf{Quasi-projection:}\label{prop:projHoneDual:proj}
        $\projHone_\TT' \phi = \phi$ for all $\phi \in \PP^0(\TT)$.
    \item \textbf{Approximation:}\label{prop:projHoneDual:app}
        $\norm{(1-\projHone_\TT')\phi}{-1} \lesssim \norm{h_\TT\phi}{}$ for all $\phi\in L^2(\Omega)$.
    \item \textbf{Local boundedness for $L^2(\Omega)$ functions:}\label{prop:projHoneDual:bound}
      \begin{align*}
        \norm{\projHone_\TT'\phi}{T} \lesssim \norm{\phi}{\Patch_\TT(T)} \quad\text{and}\quad
        \norm{\projHone_\TT'\phi}{-1,T} \lesssim \norm{\phi}{-1,\Patch_\TT(T)} \quad\text{for all } T\in\TT \text{ and }\phi \in L^2(\Omega).
      \end{align*}
    \item \textbf{Global boundedness:}\label{prop:projHoneDual:bound:global}
        $\norm{\projHone_\TT'\phi}{-1} \lesssim \norm{\phi}{-1}$ for all $\phi \in H^{-1}(\Omega)$.
  \end{enumerate}
  The involved constants only depend on $d$ and shape regularity of $\TT$.
\end{lemma}
\begin{proof}
  \textbf{Proof of~\ref{prop:projHoneDual:proj}.}
  The quasi-projection property can be seen from the orthogonality property $\ip{(1-\projHone_\TT)v}1_T = 0$ yielding the identity
  \begin{align*}
    \ip{\projHone_\TT'\phi}v = \ip{\phi}{\projHone_\TT v} = \ip{\phi}{v} \quad\text{for all } \phi\in \PP^0(\TT), \, v\in H_0^1(\Omega). 
  \end{align*}

  \noindent
  \textbf{Proof of~\ref{prop:projHoneDual:app}.}
  Let $\phi\in L^2(\Omega)$ and $T\in\TT_\ell$. 
  Using the approximation property of $\projHone_\TT$ we get that
  \begin{align*}
    \ip{(1-\projHone_\TT')\phi}v = \ip{\phi}{(1-\projHone_\TT)v} \lesssim \norm{h_\TT\phi}{} \norm{\nabla v}{}.
  \end{align*}
  Dividing by $\norm{\nabla v}{}$ and taking the supremum over all $0\neq v\in H_0^1(\Omega)$ shows the assertion. 

  \noindent
  \textbf{Proof of~\ref{prop:projHoneDual:bound}.}
  Local boundedness in $L^2$ can be derived from the local definition of the operator.

  To see local boundedness in $H^{-1}$ we extend any $v\in H_0^1(T)$ by $0$ on $\Omega\setminus  T$.
  Then, using duality, continuity of $\projHone_\TT$ and $\supp \projHone_\TT v \subseteq \overline\Patch_\TT(T)$ we conclude that
  \begin{align*}
    \ip{\projHone_\TT'\phi}{v}_T &= \ip{\phi}{\projHone_\TT v}_{\Patch_\TT(T)} 
    \lesssim \norm{\phi}{-1,\Patch_\TT(T)}\norm{\nabla \projHone_\TT v}{\Patch_\TT(T)} \lesssim \norm{\phi}{-1,\Patch_\TT(T)} \norm{\nabla v}{T}.
  \end{align*}

  \noindent
  \textbf{Proof of~\ref{prop:projHoneDual:bound:global}.}
  This follows directly from duality and global boundedness of $\projHone_\TT$ in $H_0^1(\Omega)$. 
\end{proof}

Recall that the range of $\projHone_\TT'$ is a subspace of $\PP^1(\TT)$. In order to obtain a projection onto piecewise constants we apply the $L^2(\Omega)$ orthogonal projection.
\begin{theorem}\label{thm:projHmOneSZ}
  The operator $\projHmOneSZ_\TT := \projLtwo_\TT \projHone_\TT'$ has the following properties:
  \begin{enumerate}[label=(\alph*)]
    \item\label{prop:projHoneSZ:proj} \textbf{Projection:}
      $\projHmOneSZ_\TT^2 = \projHmOneSZ_\TT$.
    \item\label{prop:projHoneSZ:app} \textbf{Approximation:}
        $\norm{(1-\projHmOneSZ_\TT)\phi}{-1} \lesssim \norm{h_\TT \phi}{}$ for all $\phi\in L^2(\Omega)$.
    \item\label{prop:projHoneSZ:bound}\textbf{Local boundedness for $L^2(\Omega)$ functions:}
      \begin{align*}
        \norm{\projHmOneSZ_\TT\phi}{T} \lesssim \norm{\phi}{\Patch_\TT(T)} \quad\text{and}\quad
        \norm{\projHmOneSZ_\TT\phi}{-1,T} \lesssim \norm{\phi}{-1,\Patch_\TT(T)}
        \quad\text{for all }T\in\TT \text{ and }\phi\in L^2(\Omega).
      \end{align*}
    \item\label{prop:projHoneSZ:bound:global} \textbf{Global boundedness:}
        $\norm{\projHmOneSZ_\TT\phi}{-1} \lesssim \norm{\phi}{-1}$ for all $\phi\in H^{-1}(\Omega)$.
    \end{enumerate}
    The involved constants only depend on $d$ and shape regularity of $\TT$.
\end{theorem}
\begin{proof}
  \textbf{Proof of~\ref{prop:projHoneSZ:proj}.}
  The projection property follows from $\projHone_\TT' \phi = \phi$ for $\phi\in\PP^0(\TT)$ (Lemma~\ref{lem:projHoneDual}), which yields
  \begin{align*}
    \projHmOneSZ_\TT^2 = \projLtwo_\TT \projHone_\TT' (\projLtwo_\TT \projHone_\TT') = \projLtwo_\TT (\projLtwo_\TT \projHone_\TT') = \projLtwo_\TT \projHone_\TT' = \projHmOneSZ_\TT.
  \end{align*}

  \noindent
  \textbf{Proof of~\ref{prop:projHoneSZ:app}.}
  For the approximation property we note that
  \begin{align}\label{eq:proof:HoneSZ:b}
    \norm{(1-\projHmOneSZ_\TT)\phi}{-1} \leq \norm{(1-\projLtwo_\TT)\projHone_\TT'\phi}{-1} + \norm{(1-\projHone_\TT')\phi}{-1}.
  \end{align}
  The first term is estimated with the approximation property of $\projLtwo_\TT$ and the local $L^2$ boundedness of $\projHone_\TT'$ (Lemma~\ref{lem:projHoneDual}), i.e., 
  \begin{align*}
    \norm{(1-\projLtwo_\TT)\projHone_\TT'\phi}{-1} \lesssim \norm{h_\TT \projHone_\TT'\phi}{} \lesssim \norm{h_\TT \phi}{}.
  \end{align*}
  The second term in~\eqref{eq:proof:HoneSZ:b} is estimated using the approximation property of $\projHone_\TT'$, see Lemma~\ref{lem:projHoneDual}.
  
  \noindent
  \textbf{Proof of~\ref{prop:projHoneSZ:bound}.}
  Note that $\projLtwo_\TT$ is locally bounded. Together with the local boundedness of $\projHone_\TT'$ (Lemma~\ref{lem:projHoneDual}) we get that
  \begin{align*}
    \norm{\projLtwo_\TT\projHone_\TT'\phi}T \leq \norm{\projHone_\TT'\phi}T \lesssim \norm{\phi}{\Patch_\TT(T)}.
  \end{align*}

  For the local bound in $H^{-1}(T)$ we stress that $\norm{\projLtwo_\TT\projHone_\TT'\phi}{-1,T}\lesssim \norm{\projHone_\TT'\phi}{-1,T}$ which can be seen from $\norm{v}T \lesssim h_T\norm{\nabla v}T$ for $v\in H_0^1(T)$ yielding
  \begin{align*}
    \ip{\projLtwo_\TT\projHone_\TT'\phi}v_T = \ip{\projHone_\TT'\phi}{\projLtwo_\TT v}_T \lesssim \norm{\projHone_\TT'\phi}{T}\norm{v}T \lesssim 
    h_T\norm{\projHone_\TT'\phi}{T}\norm{\nabla v}T.
  \end{align*}
  The assertion then follows from the inverse inequality~\eqref{eq:invest} and local boundedness of $\projHone_\TT'$ (Lemma~\ref{lem:projHoneDual}).

  \noindent
  \textbf{Proof of~\ref{prop:projHoneSZ:bound:global}.}
  Boundedness in $H^{-1}(\Omega)$  is not directly clear due to $\projLtwo_\TT$. 
  However, using the approximation property of $\projLtwo_\TT$ we get that $\norm{\projLtwo_\TT\psi}{-1} \lesssim \norm{\psi}{-1} + \norm{h_\TT \psi}{}$ and by applying the inverse inequality~\eqref{eq:invest} we further conclude that 
  \begin{align*}
    \norm{\projLtwo_\TT\psi}{-1} \lesssim \norm{\psi}{-1} \quad\text{for all } \psi \in \PP^1(\TT).
  \end{align*}
  That is, $\projLtwo_\TT$ is bounded in $H^{-1}(\Omega)$ when restricted to $\PP^1(\TT)$. 
  Using the latter estimate with $\psi = \projHone_\TT'\phi$ and the boundedness of $\projHone_\TT'$ (Lemma~\ref{lem:projHoneDual}) implies that
  \begin{align*}
    \norm{\projHmOneSZ_\TT\phi}{-1} = \norm{\projLtwo_\TT\projHone_\TT'\phi}{-1} \lesssim \norm{\phi}{-1},
  \end{align*}
  which concludes the proof.
\end{proof}

\subsection{A local projection operator in $\widetilde H^{-1}(\Omega)$}\label{sec:projHmOneSZtilde}
We follow the same ideas as in Section~\ref{sec:projHmOneSZ} but only point out the differences in the definition. 
The results below follow the same argumentations as in Section~\ref{sec:projHmOneSZ} and are therefore omitted. 
For each $z\in\NN_\TT$ let $\emptyset\neq \gamma_z\subseteq\patch_\TT(z)$ and set
\begin{align*}
 \overline \projSZ_\TT v = \sum_{z\in\NN_\TT} \alpha_{\TT,z} \eta_{\TT,z} :=  \sum_{z\in\NN_\TT} \sum_{T\in\gamma_z} \frac{|T|\ip{v}{\psi_{T,z}}_T}{\sum_{T'\in\gamma_z} |T'|} \eta_{\TT,z}.
\end{align*}
Throughout, we consider $\gamma_z = \patch_\TT(z)$, see Remark~\ref{rem:SZ}. We define $\overline\projHone_\TT := \overline\projSZ_\TT + \projBubble_\TT(1-\overline\projSZ_\TT)$.

Before we state the results let us note that the local boundedness property Lemma~\ref{lem:projHoneDual}\ref{prop:projHoneDual:bound} does not hold for the $\norm\cdot{-1,T,\sim}$ norms due to different scaling properties. 
Nevertheless, we use an auxiliary norm defined with $H_{\Gamma,c}^1(\omega) = \set{v\in H^1(\omega)}{v|_{\partial\omega\setminus\Gamma} = 0}$ as
\begin{align*}
  \norm{\phi}{-1,\omega,\Gamma} := \begin{cases}
    \norm{\phi}{-1,\sim} & \text{if } |\omega|=|\Omega|, \\
    \sup_{0\neq v\in H_{\Gamma,c}^1(\Omega)} \frac{\dual{\phi}{v}_\omega}{\norm{\nabla v}{\omega}} & \text{else}.
  \end{cases}
\end{align*}
Note that if $|\partial\omega\cap\Gamma|=0$ then $H_{\Gamma,c}^1(\omega) = H_0^1(\omega)$, thus, $\norm{\phi}{-1,\omega,\Gamma}=\norm{\phi}{-1,\omega}$. 
In particular, we stress that local scaling arguments prove the inverse estimate
\begin{align*}
  h_T \norm{\phi}{T} \lesssim \norm{\phi}{-1,T,\Gamma} \quad\text{for all } T\in\TT \text{ and }\phi\in \PP^p(\TT).
\end{align*}

\begin{lemma}\label{lem:projTildeHoneDual}
  The operator $\overline\projHone_\TT'$ satisfies the following properties:
  \begin{enumerate}[label=(\alph*)]
    \item \textbf{Quasi-projection:}\label{prop:projTildeHoneDual:proj}
        $\overline\projHone_\TT' \phi = \phi$ for all $\phi \in \PP^0(\TT)$.
    \item \textbf{Approximation:}\label{prop:projTildeHoneDual:app}
        $\norm{(1-\overline\projHone_\TT')\phi}{-1,\sim} \lesssim \norm{h_\TT\phi}{}$ for all $\phi\in L^2(\Omega)$. 
    \item \textbf{Local boundedness for $L^2(\Omega)$ functions:}\label{prop:projTildeHoneDual:bound}
      \begin{align*}
        \norm{\overline\projHone_\TT'\phi}{T} \lesssim \norm{\phi}{\Patch_\TT(T)} \quad\text{and}\quad
        \norm{\overline\projHone_\TT'\phi}{-1,T,\Gamma} \lesssim \norm{\phi}{-1,\Patch_\TT(T),\Gamma} \quad\text{for all } T\in\TT \text{ and }\phi \in L^2(\Omega).
      \end{align*}
    \item \textbf{Global boundedness:}\label{prop:projTildeHoneDual:bound:global}
        $\norm{\overline\projHone_\TT'\phi}{-1,\sim} \lesssim \norm{\phi}{-1,\sim}$ for all  $\phi \in\widetilde H^{-1}(\Omega)$.
  \end{enumerate}
    The involved constants only depend on $d$ and shape regularity of $\TT$.
\end{lemma}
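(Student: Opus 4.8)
The plan is to mirror the proof strategy indicated for Lemma~\ref{lem:projHoneDual}, using duality against the primal operator $\overline\projHone_\TT$ and substituting the role of $H_0^1$ by the space $H_{\Gamma,c}^1$ whenever a patch touches $\Gamma$. First I would establish the analogue of Lemma~\ref{lem:projHone} for $\overline\projHone_\TT$: since $\overline\projSZ_\TT$ now includes the boundary vertices $z\in\NN_\TT^\Gamma$ in the sum, the operator maps $L^2(\Omega)\to\cS^{1,b}(\TT) := \cS^1(\TT)\oplus\cS^b(\TT)$ (the full piecewise-affine space plus bubbles, not the zero-trace subspace), it reproduces all of $\cS^1(\TT)$, it satisfies the orthogonality $\ip{(1-\overline\projHone_\TT)v}1_T=0$ for all $T\in\TT$ and $v\in L^2(\Omega)$, and it enjoys the same local $L^2$- and $H^1$-boundedness and $O(h_T)$-approximation estimates on patches. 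These are exactly the Scott--Zhang properties for the variant including boundary nodes, which are classical (e.g.~\cite{SZ_90}); the bubble correction is handled verbatim as in the proof of Lemma~\ref{lem:projHone}. As in Section~\ref{sec:projHmOneSZ}, the adjoint has the explicit form $\overline\projHone_\TT' = \overline\projSZ_\TT' + (1-\overline\projSZ_\TT')\projBubble_\TT'$ with $\overline\projSZ_\TT'\phi = \sum_{z\in\NN_\TT}\ip{\phi}{\eta_{\TT,z}}\varphi_{\TT,z}$, so $\overline\projHone_\TT'\phi\in\PP^1(\TT)$, which licenses the local inverse estimates we need.

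Next I would prove the four claims. For~\ref{prop:projTildeHoneDual:proj}: for $\phi\in\PP^0(\TT)$ and any $v\in H^1(\Omega)$, orthogonality gives $\ip{\overline\projHone_\TT'\phi}{v} = \ip{\phi}{\overline\projHone_\TT v} = \ip{\phi}{v}$ (here one uses $\ip{(1-\overline\projHone_\TT)v}1_T=0$ and that $\phi$ is constant on each $T$), whence $\overline\projHone_\TT'\phi=\phi$. For~\ref{prop:projTildeHoneDual:app}: for $\phi\in L^2(\Omega)$ and $v\in H^1(\Omega)$, the local approximation property of $\overline\projHone_\TT$ (valid for $H^1$ functions, since boundary nodes are now interpolated) yields $\ip{(1-\overline\projHone_\TT')\phi}{v} = \ip{\phi}{(1-\overline\projHone_\TT)v} \lesssim \norm{h_\TT\phi}{}\norm{\nabla v}{}$; note that the relevant seminorm on the dual side is $\big(\norm{\nabla v}{}^2+\norm{v}{}^2\big)^{1/2}\geq\norm{\nabla v}{}$, so dividing by $\big(\norm{\nabla v}{}^2+\norm{v}{}^2\big)^{1/2}$ and taking the supremum over $0\neq v\in H^1(\Omega)$ gives the $\norm{\cdot}{-1,\sim}$ bound. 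For~\ref{prop:projTildeHoneDual:bound:global}: this is immediate from duality and the global $H^1(\Omega)$-boundedness of $\overline\projHone_\TT$, namely $\norm{\overline\projHone_\TT'\phi}{-1,\sim} = \sup_v \ip{\phi}{\overline\projHone_\TT v}/\norm{v}{H^1} \lesssim \norm{\phi}{-1,\sim}$.

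The delicate part is~\ref{prop:projTildeHoneDual:bound}, the local $\norm{\cdot}{-1,T,\Gamma}$-boundedness, and this is where the auxiliary norm was introduced precisely to make the argument go through. The $L^2$ bound is again just the local definition of the operator. For the $H^{-1}$-type bound, fix $T\in\TT$ and take $v\in H_{\Gamma,c}^1(T)$ (i.e.\ $v$ vanishes on $\partial T\setminus\Gamma$), extended by zero to $\Omega$; then $v\in H_{\Gamma,c}^1(\Omega)$. Duality gives $\ip{\overline\projHone_\TT'\phi}{v}_T = \ip{\phi}{\overline\projHone_\TT v}_{\Patch_\TT(T)}$, and the key geometric point is that $\overline\projHone_\TT v$ is supported in $\overline\Patch_\TT(T)$ and, crucially, still vanishes on $\partial\Patch_\TT(T)\setminus\Gamma$ --- this holds because the only nodal values of $v$ that are not killed are those at vertices lying on $\Gamma$, so $\overline\projHone_\TT v\in H_{\Gamma,c}^1(\Patch_\TT(T))$. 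Therefore $\ip{\phi}{\overline\projHone_\TT v}_{\Patch_\TT(T)}\lesssim\norm{\phi}{-1,\Patch_\TT(T),\Gamma}\norm{\nabla\overline\projHone_\TT v}{\Patch_\TT(T)}\lesssim\norm{\phi}{-1,\Patch_\TT(T),\Gamma}\norm{\nabla v}{T}$ by the local $H^1$-stability of $\overline\projHone_\TT$, and dividing by $\norm{\nabla v}{T}$ and taking the supremum yields the claim. I expect the main obstacle to be verifying carefully that $\overline\projHone_\TT$ preserves the boundary-trace-free condition at the patch level so that the $\norm{\cdot}{-1,T,\Gamma}$ norm on the patch is the correct quantity to bound against --- in particular one must check that the bubble correction $\projBubble_\TT(1-\overline\projSZ_\TT)v$, which lives in $H_0^1$, does not spoil this, and that the boundary-vertex shape functions $\varphi_{\TT,z}$ appearing in the adjoint do not contribute normal-trace mismatches on interior patch boundaries; the remaining estimates are routine adaptations of Section~\ref{sec:projHmOneSZ} with the scaling $h_T\norm{\phi}{T}\lesssim\norm{\phi}{-1,T,\Gamma}$ replacing the interior inverse estimate.
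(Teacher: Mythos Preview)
Your proposal is correct and follows exactly the approach the paper indicates: the paper omits the proof, stating only that it ``follows basically the same lines of argumentation as given in Section~\ref{sec:projHmOneSZ}'', and you have supplied precisely those details with the necessary adaptations (testing against $H^1(\Omega)$ rather than $H_0^1(\Omega)$, and using the auxiliary norm $\norm{\cdot}{-1,T,\Gamma}$ for the local bound). Your identification of the key geometric point in~\ref{prop:projTildeHoneDual:bound} --- that $\overline\projHone_\TT v\in H_{\Gamma,c}^1(\Patch_\TT(T))$ because vertices of $T$ lie either in the interior of $\Patch_\TT(T)$ or on $\Gamma$, so no nonzero nodal coefficients land on $\partial\Patch_\TT(T)\setminus\Gamma$ --- is the correct reason the argument goes through, and is more explicit than what the paper provides.
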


\begin{theorem}\label{thm:projTildeSZ}
  The operator $\widetilde\projHmOneSZ_\TT := \projLtwo_\TT \overline\projHone_\TT'$ has the following properties:
  \begin{enumerate}[label=(\alph*)]
    \item\label{prop:projTildeHoneSZ:proj} \textbf{Projection:}
        $\widetilde\projHmOneSZ_\TT^2 = \widetilde\projHmOneSZ_\TT$.
    \item\label{prop:projTildeHoneSZ:app} \textbf{Approximation:}
        $\norm{(1-\widetilde\projHmOneSZ_\TT)\phi}{-1,\sim} \lesssim \norm{h_\TT \phi}{}$ for all $\phi\in L^2(\Omega)$.
    \item\label{prop:projTildeHoneSZ:bound}\textbf{Local boundedness for $L^2(\Omega)$ functions:}
      \begin{align*}
        \norm{\widetilde\projHmOneSZ_\TT\phi}{T} \lesssim \norm{\phi}{\Patch_\TT(T)} \quad\text{and}\quad
        \norm{\widetilde\projHmOneSZ_\TT\phi}{-1,T,\Gamma} \lesssim \norm{\phi}{-1,\Patch_\TT(T),\Gamma}
        \quad\text{for all } T\in\TT \text{ and } \phi\in L^2(\Omega).
      \end{align*}
    \item\label{prop:projTildeHoneSZ:bound:global} \textbf{Global boundedness:}
        $\norm{\widetilde\projHmOneSZ_\TT\phi}{-1,\sim} \lesssim \norm{\phi}{-1,\sim}$ for all $\phi\in \widetilde H^{-1}(\Omega)$.
    \end{enumerate}
    The involved constants only depend on $d$ and shape regularity of $\TT$.
\end{theorem}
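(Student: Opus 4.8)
The plan is to reproduce, almost line for line, the argument given for $\projHmOneSZ_\TT$ in Section~\ref{sec:projHmOneSZ}, with $\projHone_\TT'$ replaced by $\overline\projHone_\TT'$, the dual norm $\norm{\cdot}{-1}$ replaced by $\norm{\cdot}{-1,\sim}$ globally and by the auxiliary norm $\norm{\cdot}{-1,\omega,\Gamma}$ locally, and Lemma~\ref{lem:projHoneDual} replaced by Lemma~\ref{lem:projTildeHoneDual}. The one structural ingredient beyond that lemma is that $\overline\projHone_\TT'\phi\in\PP^1(\TT)$ for every $\phi\in L^2(\Omega)$; this follows, exactly as for $\projHone_\TT'$, from the explicit representations of $\overline\projSZ_\TT'$ and $\projBubble_\TT'$ together with Remark~\ref{rem:SZ}, and it makes the local and global inverse estimates recorded in Section~\ref{sec:discretespaces} and Section~\ref{sec:projHmOneSZtilde} available.

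For \ref{prop:projTildeHoneSZ:proj} I would use $\overline\projHone_\TT'\phi=\phi$ for $\phi\in\PP^0(\TT)$ (Lemma~\ref{lem:projTildeHoneDual}\ref{prop:projTildeHoneDual:proj}) together with the idempotency of $\projLtwo_\TT$ and the fact that $\projLtwo_\TT\overline\projHone_\TT'$ maps into $\PP^0(\TT)$, to get $\widetilde\projHmOneSZ_\TT^2=\projLtwo_\TT\overline\projHone_\TT'(\projLtwo_\TT\overline\projHone_\TT')=\projLtwo_\TT(\projLtwo_\TT\overline\projHone_\TT')=\widetilde\projHmOneSZ_\TT$. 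For \ref{prop:projTildeHoneSZ:app} I would split $(1-\widetilde\projHmOneSZ_\TT)\phi=(1-\projLtwo_\TT)\overline\projHone_\TT'\phi+(1-\overline\projHone_\TT')\phi$; the first term is bounded via the $\widetilde H^{-1}$-approximation property of $\projLtwo_\TT$ from Section~\ref{sec:discretespaces} combined with the local $L^2$-boundedness of $\overline\projHone_\TT'$ and the finite overlap of patches, giving $\norm{(1-\projLtwo_\TT)\overline\projHone_\TT'\phi}{-1,\sim}\lesssim\norm{h_\TT\overline\projHone_\TT'\phi}{}\lesssim\norm{h_\TT\phi}{}$, and the second term is exactly Lemma~\ref{lem:projTildeHoneDual}\ref{prop:projTildeHoneDual:app}.

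For \ref{prop:projTildeHoneSZ:bound} the $L^2$-bound is immediate from $\norm{\projLtwo_\TT w}{T}\le\norm{w}{T}$ and Lemma~\ref{lem:projTildeHoneDual}\ref{prop:projTildeHoneDual:bound}. For the local $\norm{\cdot}{-1,T,\Gamma}$-bound I would first check that $\projLtwo_\TT$ does not enlarge this norm elementwise on polynomials: for $v\in H_{\Gamma,c}^1(T)$ (extended by zero outside $T$) one has $\ip{\projLtwo_\TT w}{v}_T=\ip{w}{\projLtwo_\TT v}_T\lesssim\norm{w}{T}\norm{v}{T}\lesssim h_T\norm{w}{T}\norm{\nabla v}{T}$, using the Friedrichs-type inequality $\norm{v}{T}\lesssim h_T\norm{\nabla v}{T}$ on $H_{\Gamma,c}^1(T)$ (valid since $v$ vanishes on $\partial T\setminus\Gamma$), and then the inverse estimate $h_T\norm{w}{T}\lesssim\norm{w}{-1,T,\Gamma}$ for $w=\overline\projHone_\TT'\phi\in\PP^1(\TT)$. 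Combined with Lemma~\ref{lem:projTildeHoneDual}\ref{prop:projTildeHoneDual:bound} this gives $\norm{\widetilde\projHmOneSZ_\TT\phi}{-1,T,\Gamma}\lesssim\norm{\overline\projHone_\TT'\phi}{-1,T,\Gamma}\lesssim\norm{\phi}{-1,\Patch_\TT(T),\Gamma}$. Finally, for \ref{prop:projTildeHoneSZ:bound:global}, I would note that $\projLtwo_\TT$ restricted to $\PP^1(\TT)$ is bounded in $\widetilde H^{-1}(\Omega)$ — $\norm{\projLtwo_\TT\psi}{-1,\sim}\le\norm{(1-\projLtwo_\TT)\psi}{-1,\sim}+\norm{\psi}{-1,\sim}\lesssim\norm{h_\TT\psi}{}+\norm{\psi}{-1,\sim}\lesssim\norm{\psi}{-1,\sim}$ by the $\widetilde H^{-1}$-approximation property of $\projLtwo_\TT$ and an inverse estimate — and apply this with $\psi=\overline\projHone_\TT'\phi$, invoking the global bound Lemma~\ref{lem:projTildeHoneDual}\ref{prop:projTildeHoneDual:bound:global}: $\norm{\widetilde\projHmOneSZ_\TT\phi}{-1,\sim}\lesssim\norm{\overline\projHone_\TT'\phi}{-1,\sim}\lesssim\norm{\phi}{-1,\sim}$.

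The only real point of care, as opposed to genuine difficulty, is keeping the three norm variants $\norm{\cdot}{-1,\sim}$, $\norm{\cdot}{-1,T}$ and $\norm{\cdot}{-1,T,\Gamma}$ apart: the local $H^{-1}$-type estimates must be carried out in the $\Gamma$-modified norm (where the correct scaling and the inverse estimate of Section~\ref{sec:projHmOneSZtilde} hold, even for boundary elements), while the global estimate must return to $\norm{\cdot}{-1,\sim}$, on which $\projLtwo_\TT$ is only bounded after restriction to polynomials. None of the individual inequalities is harder than its counterpart in the $H^{-1}$ proof.
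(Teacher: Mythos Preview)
Your proposal is correct and follows precisely the route the paper indicates: the paper omits the proof of Theorem~\ref{thm:projTildeSZ} entirely, stating only that it ``follows basically the same lines of argumentation as given in Section~\ref{sec:projHmOneSZ},'' and your write-up supplies exactly those details with the appropriate substitutions ($\overline\projHone_\TT'$ for $\projHone_\TT'$, $\norm{\cdot}{-1,\sim}$ and $\norm{\cdot}{-1,T,\Gamma}$ for the corresponding $H^{-1}$-norms, and Lemma~\ref{lem:projTildeHoneDual} for Lemma~\ref{lem:projHoneDual}). The only place requiring a moment's thought beyond the template is the Friedrichs inequality on $H_{\Gamma,c}^1(T)$ used in part~\ref{prop:projTildeHoneSZ:bound}, and you have handled it correctly.
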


%%%%%%%%%%%%%%%%%%%%%%%%%%%%%%%%%%%%%%%%%%%%%%%%%%%%%%%%%
% Equivalent norms in interpolation spaces
% %%%%%%%%%%%%%%%%%%%%%%%%%%%%%%%%%%%%%%%%%%%%%%%%%%%%%%%
\subsection{Equivalent norms in interpolation spaces}\label{sec:normsIntSpaces}
In this section we collect some results on the relation between interpolation and approximation spaces.
Recall the definition of a sequence of uniform meshes from Section~\ref{sec:mesh}.

\begin{lemma}\label{lem:decompositionUniform}
  Let $s\in(0,1)$, $p\in\N_0$ and let $(\TT_\ell)_{\ell=0}^L$ denote a sequence of uniform meshes with mesh sizes $(h_\ell)_{\ell=0}^L$.
  Let $\phi = \sum_{\ell=0}^L\phi_\ell$ with $\phi_\ell\in \PP^p(\TT_\ell)$. 
  Then,
  \begin{align*}
    \norm{\phi}{-s}^2 \lesssim \sum_{\ell=0}^L h_\ell^{-2+2s} \norm{\phi_\ell}{-1}^2 \quad\text{and}\quad
    \norm{\phi}{-s,\sim}^2 \lesssim \sum_{\ell=0}^L h_\ell^{-2+2s} \norm{\phi_\ell}{-1,\sim}^2.
  \end{align*}
  The involved constants only depend on $s$, $\Omega$, $d$, $p$ and the constants from~\ref{ass:mesh:reg}--\ref{ass:mesh:gen}.
\end{lemma}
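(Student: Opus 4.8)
The plan is to exploit the interpolation-space characterization $H^{-s}(\Omega) = \interp{H^{-1}(\Omega)}{L^2(\Omega)}{1-s,2}$ together with the $K$-functional and a discrete summation argument. Recall that for the real interpolation method one has $\norm{\phi}{-s}^2 \simeq \int_0^\infty t^{-2(1-s)} K(t,\phi)^2 \,\frac{\mathrm{d}t}{t}$, where $K(t,\phi) = \inf\{\norm{\phi_0}{-1} + t\norm{\phi_1}{} : \phi = \phi_0+\phi_1\}$, and since $q_{\mathrm{ref}}^\ell \simeq h_\ell$ by Lemma~\ref{lem:meshgenunif} and uniformity, this integral is equivalent to a geometric sum over levels: $\norm{\phi}{-s}^2 \simeq \sum_{k\in\Z} h_k^{-2(1-s)} K(h_k,\phi)^2$ (truncating appropriately at the coarsest level $k=0$ and noting that the tail for $k>L$ contributes the $L^2$ part, which is dominated once we observe $\phi\in\PP^p(\TT_L)$). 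The goal is then to bound each $K(h_k,\phi)$ using the given decomposition $\phi=\sum_{\ell=0}^L\phi_\ell$.

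The key step is the choice of splitting for $K(h_k,\phi)$: I would split at level $k$, taking $\phi_0^{(k)} = \sum_{\ell=0}^{k}\phi_\ell$ (the coarse part, measured in $H^{-1}$) and $\phi_1^{(k)} = \sum_{\ell=k+1}^{L}\phi_\ell$ (the fine part, measured in $L^2$). For the coarse part we simply use the triangle inequality: $\norm{\phi_0^{(k)}}{-1} \le \sum_{\ell=0}^k \norm{\phi_\ell}{-1}$. For the fine part we invoke the inverse estimate $\norm{h_\ell^{1-s}\cdot}{-s}$ — more precisely, for $\psi\in\PP^p(\TT_\ell)$ one has $\norm{\psi}{} \lesssim h_\ell^{-1}\norm{\psi}{-1}$ (the local inverse estimate $h_T\norm{\psi}{T}\lesssim\norm{\psi}{-1,T}$ summed over elements), so $h_k\norm{\phi_1^{(k)}}{} \le h_k\sum_{\ell=k+1}^L \norm{\phi_\ell}{} \lesssim h_k\sum_{\ell=k+1}^L h_\ell^{-1}\norm{\phi_\ell}{-1} = \sum_{\ell=k+1}^L (h_k/h_\ell)\norm{\phi_\ell}{-1}$. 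Thus $K(h_k,\phi) \lesssim \sum_{\ell=0}^L \min\{1, h_k/h_\ell\}\norm{\phi_\ell}{-1}$.

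The final step is the summation: insert this bound into $\sum_k h_k^{-2(1-s)}K(h_k,\phi)^2$, expand the square, and estimate the resulting double sum $\sum_k h_k^{-2(1-s)} \big(\sum_\ell \min\{1,h_k/h_\ell\}\norm{\phi_\ell}{-1}\big)^2$. By Cauchy–Schwarz with a geometric weight — splitting $\min\{1,h_k/h_\ell\} = \min\{1,h_k/h_\ell\}^{1/2}\cdot\min\{1,h_k/h_\ell\}^{1/2}$ and using that $\sum_k h_k^{\pm\epsilon}\min\{1,h_k/h_\ell\}^{1/2}$ converges geometrically when $0<\epsilon<$ (a threshold depending on $1-s$ and $s$) — one reduces to $\sum_\ell h_\ell^{-2(1-s)}\norm{\phi_\ell}{-1}^2$, and then $h_\ell^{-2(1-s)} = h_\ell^{-2+2s}$, which is exactly the claimed right-hand side. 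The $\widetilde H^{-s}$ case is verbatim identical, using $\widetilde H^{-s}(\Omega)=\interp{\widetilde H^{-1}(\Omega)}{L^2(\Omega)}{1-s,2}$ and the corresponding inverse estimate $\norm{h_\TT^{1-s}\phi}{-s,\sim}\lesssim\norm{\phi}{-1,\sim}$.

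The main obstacle I anticipate is making the geometric-summation / discrete Hardy-type estimate fully rigorous, in particular verifying that the exponents work out so that both directions of the geometric series converge — this hinges on $s\in(0,1)$ strictly, so that both $1-s>0$ and $s>0$, giving the necessary slack. A secondary technical point is the careful treatment of the integral-to-sum reduction for the $K$-functional near $t=0$ (small scales) given that $\phi$ lives in a finite-dimensional space $\PP^p(\TT_L)$, where $K(t,\phi)\simeq t\norm{\phi}{}$ for $t\le h_L$ and the tail sum is absorbed into the $\ell=L$ term; one should confirm this does not introduce an $L$-dependent constant, which it does not because the geometric tail $\sum_{k>L}h_k^{-2(1-s)}h_k^2\norm{\phi}{}^2 \simeq h_L^{2s}\norm{\phi}{}^2 \lesssim h_L^{-2+2s}\norm{\phi}{-1}^2$ by the inverse estimate on $\PP^p(\TT_L)$.
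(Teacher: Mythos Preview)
Your overall $K$-functional strategy is sound, but the splitting you propose is backward, and the bound $K(h_k,\phi)\lesssim\sum_\ell\min\{1,h_k/h_\ell\}\norm{\phi_\ell}{-1}$ does not follow from it. You place the coarse components $\sum_{\ell\le k}\phi_\ell$ in the $H^{-1}$ part and the fine ones in the $L^2$ part; your own computation then yields coefficient $1$ for $\ell\le k$ and coefficient $h_k/h_\ell>1$ for $\ell>k$ (since $h_\ell<h_k$ there), i.e.\ the kernel $\max\{1,h_k/h_\ell\}$, not $\min$. That bound is useless: take $\phi=\phi_0\in\PP^p(\TT_0)$ with all other $\phi_\ell=0$; you get $K(h_k,\phi)\le\norm{\phi_0}{-1}$ for every $k=0,\dots,L$, hence $\sum_{k=0}^L h_k^{-2(1-s)}K(h_k,\phi)^2\simeq h_L^{-2(1-s)}\norm{\phi_0}{-1}^2$, which blows up with $L$, whereas the target is $h_0^{-2(1-s)}\norm{\phi_0}{-1}^2$. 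The fix is to reverse the roles: at scale $h_k$, put the \emph{coarse} part $\sum_{\ell\le k}\phi_\ell$ into $L^2$ (here the stronger norm) and the \emph{fine} part $\sum_{\ell>k}\phi_\ell$ into $H^{-1}$. Then the inverse estimate on the coarse terms gives $h_k\norm{\phi_\ell}{}\lesssim(h_k/h_\ell)\norm{\phi_\ell}{-1}$ with $h_k/h_\ell\le 1$, while the fine terms contribute coefficient $1$, producing exactly the $\min\{1,h_k/h_\ell\}$ kernel; your Schur/Hardy summation then goes through because both exponents $s$ and $1-s$ are strictly positive.

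For comparison, the paper argues differently: it expands $\norm{\phi}{-s}^2=\sum_{\ell,k}\ip{\phi_\ell}{\phi_k}_{-s}$, controls each off-diagonal term by the Hilbert-space interpolation inequality $|\ip{\phi_\ell}{\phi_k}_{-s}|\le\norm{\phi_\ell}{-s+\delta}\norm{\phi_k}{-s-\delta}$ for a small $\delta>0$ (via reiteration), applies inverse estimates to pass to $H^{-1}$ norms, and then bounds the resulting upper-triangular matrix $A_{\ell k}=h_\ell^{-\delta}h_k^{\delta}$ in $\ell^2$ operator norm. Your (corrected) $K$-functional route avoids the reiteration theorem and the bilinear interpolation estimate, trading them for the discrete $K$-method equivalence; both approaches ultimately reduce to the same geometric summability coming from $s\in(0,1)$.
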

\begin{proof}
  Fix some $\delta>0$ with $-1<-s-\delta<-s+\delta<0$.
  The $H^{-s}(\Omega)$ scalar product is denoted with $\ip\cdot\cdot_{-s}$. 
  Then, the reiteration theorem (\cite[Theorem~2.2]{CWHM15}), interpolation estimates in Hilbert spaces  and inverse inequalities show that
  \begin{align*}
    \norm{\phi}{-s}^2 = \sum_{\ell=0}^L \sum_{k=0}^L \ip{\phi_\ell}{\phi_k}_{-s} 
    \lesssim \sum_{\ell=0}^L \sum_{k=\ell}^L |\ip{\phi_\ell}{\phi_k}_{-s}|
    &\leq \sum_{\ell=0}^L \sum_{k=\ell}^L \norm{\phi_\ell}{-s+\delta} \norm{\phi_k}{-s-\delta} \\
    & \lesssim \sum_{\ell=0}^L \sum_{k=\ell}^L h_\ell^{-1+s-\delta} \norm{\phi_\ell}{-1} h_k^{-1+s+\delta}\norm{\phi_k}{-1} \\
    & \leq \norm{A}2 \sum_{\ell=0}^L h_\ell^{2(-1+s)} \norm{\phi_\ell}{-1}^2,
  \end{align*}
  where the matrix $A\in \R^{(L+1)\times (L+1)}$ is defined by $A_{\ell k} := h_\ell^{-\delta} h_k^{\delta}$ for $k\geq \ell$ and $A_{\ell k} := 0$ for $k<\ell$.
  From Lemma~\ref{lem:meshgenunif} we get that $h_\ell \simeq q_\mathrm{ref}^{\ell}$, thus, we conclude that $\norm{A}2\lesssim 1$.
  The second assertion is proved following the same lines of argumentation.
\end{proof}

The next result presented will be a key ingredient in the stability analysis of multilevel decompositions. It shows the deep connection between approximation spaces and interpolation spaces.
Such results are known and in fact the reason for the optimality of the BPX preconditioner. We refer to~\cite{Bornemann94} for an overview and a short discussion as well as further references. 
A proof is included for the convenience of the reader where we follow the same argumentation as in~\cite[Theorem~1]{Bornemann94}.
We note that the main argumentation is the use of the inverse inequality and the approximation property. One also uses the discrete version of the $K$ and $J$ method, see e.g.~\cite{PeetreBook}.
Moreover, note that for a sequence of uniform meshes $\bigcup_{\ell\in\N_0} \PP^0(\TT_\ell)$ is dense in $H^{-s}(\Omega)$ and $\widetilde H^{-s}(\Omega)$. 

\begin{theorem}\label{thm:normequivInterpolation}
  Let $(\TT_\ell)_{\ell\in\N_0}$ denote a sequence of uniform meshes with mesh-sizes $(h_\ell)_{\ell\in\N_0}$.
  Let $s\in(0,1)$. The norm equivalences
  \begin{align}
    \norm{\phi}{-s}^2 \simeq \norm{\phi}{-1}^2 + \sum_{\ell=0}^\infty h_\ell^{-2+2s} \norm{\phi-\projHmOne_\ell\phi}{-1}^2 
    \quad\text{for all }\phi\in H^{-s}(\Omega)
  \end{align}
  and
  \begin{align}
    \norm{\phi}{-s,\sim}^2 \simeq \norm{\phi}{-1,\sim}^2 + \sum_{\ell=0}^\infty h_\ell^{-2+2s} \norm{\phi-\projTildeHmOne_\ell\phi}{-1,\sim}^2 
    \quad\text{for all }\phi\in \widetilde H^{-s}(\Omega)
  \end{align}
  hold true and the involved constants only depend on $s$, $\Omega$, $d$ and the constants from~\ref{ass:mesh:reg}--\ref{ass:mesh:gen}. 
\end{theorem}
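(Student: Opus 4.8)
The plan is to establish the norm equivalence by treating it as a statement about the $K$-functional for the interpolation couple $(H^{-1}(\Omega), L^2(\Omega))$, and then converting between $K$-functional characterizations and the multilevel sum using the approximation and inverse properties of $\projHmOne_\ell$. Recall that $H^{-s}(\Omega) = [H^{-1}(\Omega), L^2(\Omega)]_{1-s,2}$, so $\norm{\phi}{-s}^2 \simeq \int_0^\infty (t^{-(1-s)} K(t,\phi))^2 \, \frac{\mathrm{d}t}{t}$ where $K(t,\phi) = \inf\{\norm{\phi_0}{-1} + t\norm{\phi_1}{} : \phi = \phi_0 + \phi_1\}$. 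Since the mesh sizes satisfy $h_\ell \simeq q_\mathrm{ref}^\ell$ by Lemma~\ref{lem:meshgenunif}, the integral discretizes (up to equivalence of constants) to a sum over $\ell$ with weights $h_\ell^{-2+2s}$, and the density of $\bigcup_\ell \PP^0(\TT_\ell)$ in $H^{-s}(\Omega)$ (noted before the statement) lets us work with the finite-level truncations.

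\textbf{Lower bound (the sum controls the norm).} First I would show $\norm{\phi}{-s}^2 \lesssim \norm{\phi}{-1}^2 + \sum_{\ell\geq 0} h_\ell^{-2+2s}\norm{\phi - \projHmOne_\ell\phi}{-1}^2$. Set $\phi_0 := \phi$ at the coarsest scale and use the telescoping decomposition $\phi = \projHmOne_0\phi + \sum_{\ell\geq 1}(\projHmOne_\ell - \projHmOne_{\ell-1})\phi$; each difference $d_\ell := (\projHmOne_\ell - \projHmOne_{\ell-1})\phi$ lies in $\PP^0(\TT_\ell)$, and by the triangle inequality $\norm{d_\ell}{-1} \leq \norm{\phi-\projHmOne_\ell\phi}{-1} + \norm{\phi-\projHmOne_{\ell-1}\phi}{-1}$ while $\norm{d_\ell}{} \lesssim h_\ell^{-1}\norm{d_\ell}{-1}$ by the inverse estimate. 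Feeding this into Lemma~\ref{lem:decompositionUniform} (with $p=0$) gives the desired bound directly, since $h_{\ell-1}\simeq h_\ell$ lets one absorb the shifted index.

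\textbf{Upper bound (the norm controls the sum).} For the converse I would estimate each term $\norm{\phi - \projHmOne_\ell\phi}{-1}$ using the interpolation characterization: for any splitting $\phi = \psi_0 + \psi_1$ with $\psi_0\in H^{-1}$, $\psi_1\in L^2$, write $\phi - \projHmOne_\ell\phi = (1-\projHmOne_\ell)\psi_0 + (1-\projHmOne_\ell)\psi_1$; bound the first summand by $\norm{\psi_0}{-1}$ (since $\projHmOne_\ell$ is the $H^{-1}$-orthogonal projection, hence a contraction) and the second by the approximation property $\norm{(1-\projHmOne_\ell)\psi_1}{-1} \lesssim \norm{h_\ell \psi_1}{}$. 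Taking the infimum over splittings yields $\norm{\phi-\projHmOne_\ell\phi}{-1} \lesssim K(h_\ell, \phi)$, so $h_\ell^{-2+2s}\norm{\phi-\projHmOne_\ell\phi}{-1}^2 \lesssim (h_\ell^{-(1-s)}K(h_\ell,\phi))^2$; summing in $\ell$ and comparing with the discretized $K$-functional integral (using $h_\ell\simeq q_\mathrm{ref}^\ell$ so consecutive ratios are bounded above and below) bounds the sum by $\norm{\phi}{-s}^2$, and the leading term $\norm{\phi}{-1}^2$ is controlled since $\norm{\cdot}{-1} \lesssim \norm{\cdot}{-s}$ on the dense subspace (inverse estimate) — or more cleanly, $\norm{\phi}{-1}\simeq K(1,\phi)$ up to the $\ell=0$ scale.

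\textbf{Main obstacle.} The delicate point is the passage between the continuous $K$-functional integral and the discrete sum over dyadic-like levels: one must verify that the quantity $K(t,\phi)/t^{1-s}$ behaves comparably on the interval $[h_{\ell+1}, h_\ell]$ to its value at the endpoints, which requires monotonicity/quasi-monotonicity properties of $t\mapsto K(t,\phi)$ and $t\mapsto t^{-1}K(t,\phi)$ (both are, respectively, nondecreasing and nonincreasing) together with $h_\ell/h_{\ell+1}$ bounded — this is exactly the $K$/$J$-method discretization alluded to in the text (cf.~\cite{PeetreBook}). The $\widetilde H^{-s}(\Omega)$ statement is proved verbatim, replacing $H^{-1}(\Omega)$ by $\widetilde H^{-1}(\Omega)$, $\projHmOne_\ell$ by $\projTildeHmOne_\ell$, and the norm $\norm{\cdot}{-1}$ by $\norm{\cdot}{-1,\sim}$ throughout, since all ingredients (orthogonal projection contraction, approximation property, inverse estimate, density) hold in that setting as well.
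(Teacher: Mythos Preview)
Your proposal is correct and follows essentially the same route as the paper: both arguments rest on the discrete $K$- and $J$-method characterizations of the interpolation norm, use the approximation property of $\projHmOne_\ell$ to bound $\norm{(1-\projHmOne_\ell)\phi}{-1}\lesssim K(h_\ell,\phi)$ for the upper bound, and use the telescoping decomposition $\phi=\sum_\ell(\projHmOne_\ell-\projHmOne_{\ell-1})\phi$ together with the inverse estimate for the lower bound. The one minor deviation is that for the lower bound you invoke Lemma~\ref{lem:decompositionUniform} (the strengthened Cauchy--Schwarz argument) in place of the discrete $J$-functional characterization the paper uses; both yield the same weighted-sum estimate, though the $J$-method route handles the passage to infinite sums for general $\phi\in H^{-s}(\Omega)$ more transparently than the density argument you sketch.
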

\begin{proof}
  Let $0<s<1$. We consider the discrete $K$ and $J$ version of interpolation, see, e.g.~\cite[Section~1.7]{Triebel}, 
  where $K(t,\phi) = \inf_{v\in L^2(\Omega)}(\norm{\phi-v}{-1} + t \norm{v}{})$ resp. $J(t,\phi_k) := \max\{\norm{\phi_k}{-1},t\norm{\phi_k}{}\}$ denote the $K$ resp. $J$ functional.
  By the equivalence of interpolation norms (\cite[Theorem~3.3.1]{BerghLoefstroem76}) and the equivalence of discrete interpolation norms (\cite[Section~1.7]{Triebel}) we conclude that
  \begin{align*}
    \norm{\phi}{-s}^2 \simeq \sum_{k\in\mathbb{Z}} \big( q_\mathrm{ref}^{k(-1+s)} K(q_\mathrm{ref}^k,\phi) \big)^2
    \simeq \inf\set{\sum_{k\in\mathbb{Z}} \big( q_\mathrm{ref}^{k(-1+s)} J(q_\mathrm{ref}^k,\phi_k) \big)^2}{\phi=\sum_{k\in\mathbb{Z}} \phi_k, \, \phi_k\in L^2(\Omega)},
  \end{align*}
where the infimum is taken over all possible decompositions. We note that the equivalence results in~\cite[Section~1.7]{Triebel} are given for $q_\mathrm{ref}=1/2$ but hold true for $0<q_\mathrm{ref}<1$ following the same proofs.

  Note that $\phi = \sum_{\ell=0}^\infty (\projHmOne_\ell-\projHmOne_{\ell-1}) \phi =: \sum_{\ell=0}^\infty \phi_\ell$.
  The inverse estimate, the equivalence $h_\ell \simeq q_\mathrm{ref}^\ell$ (Lemma~\ref{lem:meshgenunif}) and boundedness of the projection operators imply that
  \begin{align*}
    J(q_\mathrm{ref}^\ell,\phi_\ell) \lesssim \norm{\phi_\ell}{-1} = \norm{(\projHmOne_\ell-\projHmOne_{\ell-1}) \phi}{-1}
    \leq \norm{(1-\projHmOne_{\ell-1})\phi}{-1}.
  \end{align*}
  Therefore,
  \begin{align*}
    \norm{\phi}{-s}^2 \lesssim \sum_{\ell=0}^\infty \big(q_\mathrm{ref}^{\ell(-1+s)}J(q_\mathrm{ref}^\ell,\phi_\ell)\big)^2
    \lesssim \norm{\phi}{-1}^2 + \sum_{\ell=0}^\infty h_\ell^{-2+2s} \norm{(1-\projHmOne_\ell)\phi}{-1}^2.
  \end{align*}

  To see the other direction we use the approximation property and $h_\ell\simeq q_\mathrm{ref}^\ell$ to get that
  \begin{align*}
    \norm{\phi-\projHmOne_\ell\phi}{-1} &= \inf_{v\in L^2(\Omega)} \norm{\phi-\projHmOne_\ell v}{-1} \leq
    \inf_{v\in L^2(\Omega)} \big( \norm{\phi-v}{-1} + \norm{v-\projHmOne_\ell v}{-1} \big)
    \\ 
    &\lesssim \inf_{v\in L^2(\Omega)} \big( \norm{\phi-v}{-1} + h_\ell \norm{v}{} \big)
    \simeq \inf_{v\in L^2(\Omega)} \big( \norm{\phi-v}{-1} + q_\mathrm{ref}^\ell \norm{v}{} \big)
    = K(q_\mathrm{ref}^\ell,\phi).
  \end{align*}
  Then,
  \begin{align*}
    \sum_{\ell=0}^\infty h_\ell^{-2+2s}\norm{(1-\projHmOne_\ell)\phi}{-1}^2 
    \lesssim \sum_{\ell=0}^\infty \big(q_\mathrm{ref}^{\ell(-1+s)} K(q_\mathrm{ref}^\ell,\phi) \big)^2 \lesssim \norm{\phi}{-s}^2
  \end{align*}
  together with $\norm{\phi}{-1}\lesssim \norm{\phi}{-s}$ finishes the proof of the first equivalence.

  The second equivalence follows the same argumentation with obvious modifications. 
\end{proof}

As a consequence of the latter result we obtain a multilevel norm on $H^{-s}(\Omega)$ resp. $\widetilde H^{-s}(\Omega)$:
\begin{corollary}\label{cor:normequiv}
  Let $(\TT_\ell)_{\ell=0}^L$ denote a sequence of uniform refined meshes with mesh-size functions $(h_\ell)_{\ell=0}^L$.
  Let $s\in(0,1)$, then,
  \begin{align*}
    \norm{\phi}{-s}^2 &\simeq \sum_{\ell=0}^L h_\ell^{-2+2s} \norm{(\projHmOne_\ell-\projHmOne_{\ell-1})\phi}{-1}^2, \quad 
    \norm{\phi}{-s,\sim}^2 \simeq \sum_{\ell=0}^L h_\ell^{-2+2s} \norm{(\projTildeHmOne_\ell-\projTildeHmOne_{\ell-1})\phi}{-1,\sim}^2
  \end{align*}
  for all $\phi \in \PP^0(\TT_L)$. 
  The constants only depend on $s$, $\Omega$, $d$ and the constants from~\ref{ass:mesh:reg}--\ref{ass:mesh:gen}.
\end{corollary}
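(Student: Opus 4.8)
The plan is to deduce the Corollary from Theorem~\ref{thm:normequivInterpolation} by rewriting the sums appearing there in terms of the multilevel increments, exploiting that for a sequence of uniform meshes the spaces $\PP^0(\TT_\ell)$ are nested. As a preliminary step I would extend $(\TT_\ell)_{\ell=0}^L$ to an infinite sequence of uniform meshes; this leaves $\projHmOne_\ell$ unchanged for $\ell\le L$ and makes Theorem~\ref{thm:normequivInterpolation} applicable to $\phi\in\PP^0(\TT_L)\subset H^{-s}(\Omega)$. Nestedness gives $\projHmOne_j\projHmOne_\ell=\projHmOne_{\min\{j,\ell\}}$, so the increments $(\projHmOne_\ell-\projHmOne_{\ell-1})\phi$ (with $\projHmOne_{-1}=0$) are pairwise orthogonal in the $H^{-1}(\Omega)$ inner product; moreover $\projHmOne_\ell\phi=\phi$ for $\ell\ge L$. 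Writing $d_\ell:=\norm{(\projHmOne_\ell-\projHmOne_{\ell-1})\phi}{-1}^2$, this yields the identities
\begin{align*}
  \norm{\phi}{-1}^2=\sum_{\ell=0}^{L} d_\ell\qquad\text{and}\qquad\norm{(1-\projHmOne_\ell)\phi}{-1}^2=\sum_{k=\ell+1}^{L} d_k\quad\text{for }0\le\ell\le L .
\end{align*}

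I would then substitute these into the right-hand side of Theorem~\ref{thm:normequivInterpolation} (the infinite sum terminates, since the terms vanish for $\ell\ge L$) and swap the order of the two finite sums:
\begin{align*}
  \norm{\phi}{-1}^2+\sum_{\ell=0}^{\infty} h_\ell^{-2+2s}\norm{(1-\projHmOne_\ell)\phi}{-1}^2
  =\sum_{k=0}^{L} d_k+\sum_{k=1}^{L} d_k\sum_{\ell=0}^{k-1} h_\ell^{-2+2s}.
\end{align*}
Because $s\in(0,1)$ the exponent $-2+2s$ is negative, and Lemma~\ref{lem:meshgenunif} gives $h_\ell\simeq q_\mathrm{ref}^\ell$, so $(h_\ell^{-2+2s})_\ell$ behaves like a geometric sequence of ratio $q_\mathrm{ref}^{-2+2s}>1$; hence its partial sums are comparable to the last term, $\sum_{\ell=0}^{k-1} h_\ell^{-2+2s}\simeq h_k^{-2+2s}$ for $k\ge1$, with constants depending only on $s$ and $q_\mathrm{ref}$. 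Using in addition that $h_0^{-2+2s}\simeq1$ (the coarsest mesh is fixed), the $+1$ in $1+\sum_{\ell=0}^{k-1}h_\ell^{-2+2s}$ is absorbed and the leftover term $\norm{\phi}{-1}^2=\sum_k d_k$ is dominated, so the whole expression is $\simeq\sum_{k=0}^{L} h_k^{-2+2s} d_k$. Chaining this with Theorem~\ref{thm:normequivInterpolation} gives the first claimed equivalence.

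The second equivalence is obtained by the same reasoning with the $\widetilde H^{-1}(\Omega)$-orthogonal projections $\projTildeHmOne_\ell$, the norm $\norm{\cdot}{-1,\sim}$, and the second equivalence of Theorem~\ref{thm:normequivInterpolation}; the level increments $(\projTildeHmOne_\ell-\projTildeHmOne_{\ell-1})\phi$ are again mutually orthogonal, now in the $\widetilde H^{-1}(\Omega)$ inner product. I do not expect a genuine obstacle; the only point requiring care is the elementary geometric-sum bookkeeping, whose crux is that the negativity of $-2+2s$ makes the coarse levels inexpensive, so that the level-weighted telescoping sum simultaneously controls $\norm{\phi}{-1}^2$ and all the coarse-level $K$-functional contributions produced by Theorem~\ref{thm:normequivInterpolation}.
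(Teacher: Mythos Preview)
Your proof is correct, but it follows a different route than the paper. The paper treats the two inequalities separately: for the bound $\sum_\ell h_\ell^{-2+2s}\norm{(\projHmOne_\ell-\projHmOne_{\ell-1})\phi}{-1}^2\lesssim\norm{\phi}{-s}^2$ it simply uses $\norm{(\projHmOne_\ell-\projHmOne_{\ell-1})\phi}{-1}\le\norm{(1-\projHmOne_{\ell-1})\phi}{-1}$ and applies Theorem~\ref{thm:normequivInterpolation} (with an index shift absorbed by $h_{\ell}\simeq h_{\ell-1}$); for the converse it invokes Lemma~\ref{lem:decompositionUniform} with $\phi_\ell=(\projHmOne_\ell-\projHmOne_{\ell-1})\phi$. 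Your argument instead exploits the exact $H^{-1}$-orthogonality of the increments to rewrite $\norm{(1-\projHmOne_\ell)\phi}{-1}^2=\sum_{k>\ell}d_k$ by Pythagoras, swaps the resulting double sum, and uses a geometric-series estimate to handle both directions at once. This is more elementary in that it bypasses Lemma~\ref{lem:decompositionUniform}, but it relies crucially on the increments being orthogonal in the relevant Hilbert space inner product; the paper's approach via Lemma~\ref{lem:decompositionUniform} is the more robust tool, since it applies to arbitrary decompositions $\phi=\sum_\ell\phi_\ell$ with $\phi_\ell\in\PP^p(\TT_\ell)$ and is reused later for non-orthogonal splittings (e.g.\ in the proofs of Theorems~\ref{thm:main} and~\ref{thm:multilevelnorm}).
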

\begin{proof}
  From the properties of the projection operators and applying Theorem~\ref{thm:normequivInterpolation} we get that
  \begin{align*}
    \sum_{\ell=0}^L h_\ell^{-2+2s} \norm{(\projHmOne_\ell-\projHmOne_{\ell-1})\phi}{-1}^2
    \leq \sum_{\ell=0}^L h_\ell^{-2+2s} \norm{(1-\projHmOne_{\ell-1})\phi}{-1}^2 \lesssim \norm{\phi}{-s}^2.
  \end{align*}
  To see the other direction we apply Lemma~\ref{lem:decompositionUniform} with $\phi_\ell = (\projHmOne_\ell-\projHmOne_{\ell-1})\phi$ and note that $\sum_{\ell=0}^L \phi_\ell = \projHmOne_L\phi = \phi$.  
  The second equivalence follows the same lines of proof and is therefore omitted. 
\end{proof}

\subsection{Additive Schwarz norms}\label{sec:abstract}
Let $\HHH$ denote a Hilbert space with norm $\norm\cdot{\HHH}$ and let $\HHH_h\subset \HHH$ denote a finite-dimensional
subspace.
Let $\XX_i\subset\HHH_h$, $i\in\II$ with $\#\II<\infty$ and set
\begin{align*}
  \XX = \set{\XX_i}{i\in\II}.
\end{align*}

We say $\XX$ is a decomposition of $\HHH_h$ if
\begin{align*}
  \HHH_h = \sum_{i\in\II} \XX_i.
\end{align*}
To the decomposition $\XX$ we associate the \emph{additive Schwarz norm} $\enorm\cdot_{\HHH,\XX}$ given by
\begin{align*}
  \enorm{x}_{\HHH,\XX}^2 := \inf\set{\sum_{i\in\II} \norm{x_i}{\HHH}^2}{x_i\in \XX_i \text{ such that } x = \sum_{i\in\II} x_i}.
\end{align*}

A key ingredient in most works on additive Schwarz methods is to establish norm equivalence of the form 
\begin{align}\label{eq:equiv:abstract}
  C_1 \enorm{x}_{\HHH,\XX} \leq \norm{x}{\HHH} \leq C_2 \enorm{x}_{\HHH,\XX} \quad\text{for all }x\in \HHH_h.
\end{align}
This equivalence implies that the associated additive Schwarz preconditioner yields preconditioned systems with condition 
numbers depending only on $C_1,C_2$.
This is well-known in the context of additive Schwarz preconditioners
and we refer the interested reader to~\cite{oswald94,ToselliWidlund}.

% If some of the spaces $\XX_i$ are related to different mesh levels (refinements of the initial mesh) 
% we call $\XX$ a multilevel decomposition.
% We say that $\norm\cdot{\XX}$ is optimal if $C_1,C_2$ in~\eqref{eq:equiv:abstract} are independent of 
% the dimension of $\HHH_h$.

Note that~\eqref{eq:equiv:abstract} is equivalent to the following two estimates: 
\begin{itemize}
  \item The lower bound in~\eqref{eq:equiv:abstract} is equivalent to: For every $x\in\HHH_h$ there exist $x_i\in \XX_i$, $i\in\II$ such that $x=\sum_{i\in\II} \XX_i$ and 
    \begin{align}\label{eq:observationLowerBound}
      \sum_{i\in\II} \norm{x_i}{\HHH}^2 \leq C_1^{-2} \norm{x}{\HHH}^2.
    \end{align}
  \item The upper bound in~\eqref{eq:equiv:abstract} is equivalent to: For all $x_i\in \XX_i$, $x = \sum_{i\in\II} x_i\in\HHH_h$,
    \begin{align}\label{eq:observationUpperBound}
      C_2^{-2}\norm{x}{\HHH}^2 \leq \sum_{i\in\II} \norm{x_i}{\HHH}^2.
    \end{align}
\end{itemize}
In both estimates the constants $C_1$, $C_2$ are independent of $x$, $x_i$.

The lower bound in~\eqref{eq:equiv:abstract} is called \emph{stability of the decomposition}, see, e.g.~\cite[Assumption~2.2]{ToselliWidlund}. 
The upper bound in~\eqref{eq:equiv:abstract} is often proved by showing \emph{strengthened Cauchy--Schwarz inequalities}, see, e.g.~\cite[Assumption~2.3]{ToselliWidlund}.
Another closely related method is the \emph{multiplicative Schwarz method}, see, e.g.~\cite[Section~2.2]{ToselliWidlund}.
Convergence results of the latter method usually involve the same analytical tools, namely, the stability of subspace decompositions and strengthened Cauchy--Schwarz inequalities, see, e.g.~\cite[Theorem~2.9]{ToselliWidlund}.

\subsection{Stable splittings in $H^{-1}(\Omega)$ and $\widetilde H^{-1}(\Omega)$}
In this section we recall some results of our work~\cite{FuehrerHeuerQuasiDiagonal19} on stable one-level decompositions
which will be key ingredients in the proof of our main theorems for uniform meshes.

For a mesh $\TT$ we consider the decomposition $\XX = \set{\XX_E}{E\in\EE}$ of $\PP^0(\TT)$, i.e., 
\begin{align*}
  \PP^0(\TT) = \sum_{E\in\EE} \XX_E,
\end{align*}
where $\XX_E = \linhull\{\psi_E\}$.
We use the notation $\enorm{\cdot}_{-1}=\enorm{\cdot}_{H^{-1}(\Omega),\XX}$ and recall the result~\cite[Theorem~3]{FuehrerHeuerQuasiDiagonal19}:
\begin{theorem}\label{thm:normEquivOnelevel}
  For $\phi\in \PP^0(\TT)$,
  \begin{align*}
    \enorm{\phi}_{-1} \lesssim \norm{\phi}{-1} \lesssim \enorm{\phi}_{-1},
  \end{align*}
  and the involved constants depend only on $\Omega$, $d$ and shape regularity of $\TT$.
\end{theorem}

A similar result is valid for the space $\widetilde H^{-1}(\Omega)$ with the decomposition $\widetilde\XX = \{\XX_\Omega\} \cup \set{\XX_E}{E\in\EE^\Omega}$ of $\PP^0(\TT)$, i.e.,
\begin{align*}
  \PP^0(\TT) = \XX_\Omega + \sum_{E\in\EE^\Omega} \XX_E,
\end{align*}
where $\XX_\Omega = \linhull\{1\}$.
Let us note that the latter splitting already implies the unique decomposition 
$\phi = \phi_0 + \phi_* =: \projLtwo_\Omega \phi + \phi_*$ for $\phi\in\PP^0(\TT)$, 
where $\projLtwo_\Omega$ denotes the $L^2(\Omega)$ projection to $\XX_\Omega = \PP^0(\Omega)$. Note that $\phi_*\in \PP_*^0(\TT):=\set{\psi\in\PP^0}{\projLtwo_\Omega\psi=0}$.
Moreover, we stress that
\begin{align*}
  \norm{\phi}{-1,\sim}^2 \simeq \norm{\phi_0}{-1,\sim}^2 + \norm{\phi_*}{-1,\sim}^2.
\end{align*}
We use the notation $\enorm{\cdot}_{-1,\sim} = \enorm{\cdot}_{\widetilde H^{-1}(\Omega),\widetilde\XX}$ and recall~\cite[Theorem~4]{FuehrerHeuerQuasiDiagonal19}:
\begin{theorem}\label{thm:normEquivOnelevel:tilde}
  For $\phi\in\PP^0(\TT)$,
  \begin{align*}
    \enorm{\phi}_{-1,\sim} \lesssim \norm{\phi}{-1,\sim} \lesssim \enorm{\phi}_{-1,\sim},
  \end{align*}
  and the involved constants depend only on $\Omega$, $d$ and shape regularity of $\TT$.
\end{theorem}

\begin{remark}
  We note that the proof of the lower bound in Theorem~\ref{thm:normEquivOnelevel} resp. Theorem~\ref{thm:normEquivOnelevel:tilde}
  depends on regularity results of the Poisson equation (Dirichlet problem resp. Neumann problem) in $\Omega$. 
  Regularity enters when estimating the $L^2(\Omega)$ norm of the Raviart--Thomas projection operator. 
  One can get rid of the dependence on regularity results by switching to a different projection operator that is $L^2(\Omega)$ stable up to oscillation terms, e.g., the operator from~\cite[Theorem~3.2]{egsv2019}, see also Lemma~\ref{lem:Hdivproj}.
\end{remark}

\section{Main results}\label{sec:main}
Throughout this section let $(\TT_\ell)_{\ell=0}^L$ denote a sequence of meshes.
For $\ell\geq 1$ define
\begin{alignat*}{2}
  \widetilde\EE_0 &:= \EE_0,  &\quad \widetilde\EE_\ell &:= \EE_\ell\setminus\EE_{\ell-1} \cup 
  \set{E\in\EE_\ell\cap \EE_{\ell-1}}{\supp(\psi_{\ell,E})\subsetneq \supp(\psi_{\ell-1,E})}, \\
  \widetilde\EE_0^\Omega &:= \EE_0^\Omega, &\quad 
  \widetilde\EE_\ell^\Omega &:= \EE_\ell^\Omega\setminus\EE_{\ell-1}^\Omega \cup 
  \set{E\in\EE_\ell^\Omega\cap \EE_{\ell-1}^\Omega}{\supp(\psi_{\ell,E})\subsetneq \supp(\psi_{\ell-1,E})}.
\end{alignat*}
Note that if $(\TT_\ell)_{\ell=0}^L$ is a sequence of uniform meshes, then $\EE_\ell = \widetilde\EE_\ell$ resp., $\EE_\ell^\Omega = \widetilde\EE_\ell^\Omega$.

\subsection{Multilevel decomposition for $H^{-s}(\Omega)$}
We consider the collection
\begin{align*}
  \XX_L = \set{\XX_{\ell,E}}{E\in\widetilde\EE_\ell, \, \ell=0,\dots,L},
  \quad\text{where }
  \XX_{\ell,E} = \linhull\{\psi_{\ell,E}\},
\end{align*}
and use the notation $\enorm{\cdot}_{-s} = \enorm{\cdot}_{H^{-s}(\Omega),\XX_L}$. Then, our first main result reads as follows:
\begin{theorem}\label{thm:main}
  The set $\XX_L$ provides a decomposition of $\PP^0(\TT_L)$, i.e.,
  \begin{align*}
    \PP^0(\TT) = \sum_{\ell=0}^L \sum_{E\in\widetilde\EE_\ell} \XX_{\ell,E}.
  \end{align*}
  Let $s\in(0,1)$. Then,
  \begin{align}
    \enorm{\phi}_{-s} \lesssim \norm{\phi}{-s} \lesssim \enorm{\phi}_{-s} \quad\text{for all } \phi\in \PP^0(\TT_L),
  \end{align}
  and the involved constants only depend on $\Omega$, $s$, $d$, the constants from~\ref{ass:mesh:reg}--\ref{ass:mesh:gen}, and $\TT_0$.
\end{theorem}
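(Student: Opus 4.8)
The decomposition claim ($\PP^0(\TT_L) = \sum_{\ell,E}\XX_{\ell,E}$) should follow by telescoping: any $\phi\in\PP^0(\TT_L)$ can be written as $\phi = \sum_{\ell=0}^L (\projHmOneSZ_\ell - \projHmOneSZ_{\ell-1})\phi$ (or with $\Pi^{(-1)}_\ell$), and each difference $(\projHmOneSZ_\ell-\projHmOneSZ_{\ell-1})\phi$ lies in $\PP^0(\TT_\ell)$ with vanishing mean on elements of $\TT_{\ell-1}$ that are not refined, so by the one-level result (Theorem~\ref{thm:normEquivOnelevel}) it decomposes into the $\XX_{\ell,E}$ with $E$ ranging over the \emph{new or shrunk} facets, i.e.\ exactly $\widetilde\EE_\ell$. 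The bulk of the work is the norm equivalence, which I would split into the lower bound \eqref{eq:observationLowerBound} (existence of a good decomposition) and the upper bound \eqref{eq:observationUpperBound} (every decomposition is controlled).

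\textbf{Lower bound.} Given $\phi\in\PP^0(\TT_L)$, set $\phi_\ell := (\projHmOneSZ_\ell - \projHmOneSZ_{\ell-1})\phi \in \PP^0(\TT_\ell)$. The plan is to bound $\sum_\ell \norm{\phi_\ell}{-s}^2$ by $\norm{\phi}{-s}^2$ and then, on each level, invoke the one-level splitting of $\phi_\ell$ into $\XX_{\ell,E}$, $E\in\widetilde\EE_\ell$, paying only shape-regularity constants. Concretely: by the inverse estimate $\norm{h_\ell^{1-s}\psi}{-s}\lesssim\norm{\psi}{-1}$ on $\PP^0(\TT_\ell)$ we have $\norm{\phi_\ell}{-s}^2 \lesssim h_\ell^{-2+2s}\norm{\phi_\ell}{-1}^2$, and $\norm{\phi_\ell}{-1} \le \norm{(1-\projHmOneSZ_{\ell-1})\phi}{-1}$ by projection and global boundedness of $\projHmOneSZ$. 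Summing and using Theorem~\ref{thm:normequivInterpolation} (applied along the uniform refinement chain dominating $(\TT_\ell)$, via the $k_\mathrm{unif}$ comparison) controls $\sum_\ell h_\ell^{-2+2s}\norm{(1-\projHmOneSZ_{\ell-1})\phi}{-1}^2$ by $\norm{\phi}{-s}^2$. Then on each level, the one-level decomposition of $\phi_\ell$ over $\widetilde\EE_\ell$ gives $\phi_{\ell,E}\in\XX_{\ell,E}$ with $\sum_{E\in\widetilde\EE_\ell}\norm{\phi_{\ell,E}}{-1}^2 \lesssim \norm{\phi_\ell}{-1}^2$; converting back via $\norm{\phi_{\ell,E}}{-s}\simeq h_E^s\norm{\psi_{\ell,E}}{}\cdot(\text{coeff})$ and $h_E\simeq h_\ell$ shows $\sum_{E}\norm{\phi_{\ell,E}}{-s}^2 \simeq h_\ell^{2s-2}\sum_E\norm{\phi_{\ell,E}}{-1}^2 \lesssim h_\ell^{2s-2}\norm{\phi_\ell}{-1}^2 \lesssim \norm{\phi_\ell}{-s}^2$. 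Chaining the estimates yields the lower bound.

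\textbf{Upper bound.} Here, given \emph{any} decomposition $\phi = \sum_{\ell,E}\phi_{\ell,E}$, group by level as $\phi = \sum_\ell \widetilde\phi_\ell$ with $\widetilde\phi_\ell := \sum_{E\in\widetilde\EE_\ell}\phi_{\ell,E}\in\PP^0(\TT_\ell)$. I would then apply Lemma~\ref{lem:decompositionUniform}-style reasoning (again after passing to a dominating uniform mesh hierarchy to make $h_\ell\simeq q_\mathrm{ref}^\ell$ and control the relevant Gram matrix): $\norm{\phi}{-s}^2 \lesssim \sum_\ell h_\ell^{-2+2s}\norm{\widetilde\phi_\ell}{-1}^2$. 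Finally, on each level the one-level \emph{upper} bound (again Theorem~\ref{thm:normEquivOnelevel} and Lemma~\ref{lem:scaling}) gives $\norm{\widetilde\phi_\ell}{-1}^2 \le \enorm{\widetilde\phi_\ell}_{-1}^2 \lesssim \sum_{E\in\widetilde\EE_\ell}\norm{\phi_{\ell,E}}{-1}^2 \simeq h_\ell^{2-2s}\sum_{E\in\widetilde\EE_\ell}\norm{\phi_{\ell,E}}{-s}^2$ (using $h_E\simeq h_\ell$). Substituting, the $h_\ell$ powers cancel and we obtain $\norm{\phi}{-s}^2 \lesssim \sum_{\ell,E}\norm{\phi_{\ell,E}}{-s}^2$, i.e.\ the upper bound. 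Taking the infimum over decompositions completes the proof.

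\textbf{Main obstacle.} The delicate point is the \emph{locally refined} case: Theorem~\ref{thm:normequivInterpolation} and Lemma~\ref{lem:decompositionUniform} are stated for uniform meshes, so I must carefully transfer them to the adaptive setting through the comparison with a dominating uniform hierarchy $(\widehat\TT_m)$ — keeping track of the generation shift $k_\mathrm{unif}$, using $h_T\simeq q_\mathrm{ref}^{\gen(T)}$ from assumption \ref{ass:mesh:ref}, and ensuring the level-indexed $\ell^2$ sums on the adaptive chain remain equivalent to those on the uniform chain. Verifying that the bookkeeping of facets in $\widetilde\EE_\ell$ (the "new or shrunk support" condition) exactly matches what the telescoped differences $(\projHmOneSZ_\ell-\projHmOneSZ_{\ell-1})\phi$ require for the one-level decomposition to apply is the other place where care is needed; everything else is scaling via Lemma~\ref{lem:scaling} and interpolation/inverse estimates.
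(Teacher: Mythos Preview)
For uniform meshes your outline matches the paper almost verbatim (the paper uses the $H^{-1}$-orthogonal projection $\projHmOne_\ell$ where you use $\projHmOneSZ_\ell$, which is immaterial). The gap is in the adaptive case, and it is not merely bookkeeping.

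\textbf{Upper bound.} You group by adaptive level, setting $\widetilde\phi_\ell=\sum_{E\in\widetilde\EE_\ell}\phi_{\ell,E}$, and then want to apply Lemma~\ref{lem:decompositionUniform} with a weight ``$h_\ell^{-2+2s}$'', finishing with ``$h_E\simeq h_\ell$''. On a locally refined sequence there is no scalar $h_\ell$: the facets in $\widetilde\EE_\ell$ can live at arbitrarily many different uniform scales simultaneously (one bisection step can touch coarse and fine regions of a graded mesh), so $h_E\simeq h_\ell$ is simply false and the lemma does not apply to $\widetilde\phi_\ell$. The paper's fix is to regroup not by $\ell$ but by \emph{scale}: it introduces a map $\mfunUp(\ell,E)=m$ into the uniform hierarchy $(\widehat\TT_m)$ (Lemma~\ref{lem:meshpropUp}), collects $\widehat\phi_m=\sum_{\mfunUp(\ell,E)=m}\phi_{\ell,E}\in\PP^0(\widehat\TT_m)$, and only then applies Lemma~\ref{lem:decompositionUniform}. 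Bounded overlap at each fixed $m$ then controls $\norm{\widehat\phi_m}{-1}$ by the local pieces.

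\textbf{Lower bound.} Two problems. First, Theorem~\ref{thm:normEquivOnelevel} gives a stable splitting of $\phi_\ell$ over \emph{all} of $\EE_\ell$, not over $\widetilde\EE_\ell$; its construction is global (a Poisson problem on $\Omega$), so even a locally supported $\phi_\ell$ need not produce coefficients supported on the new facets only. Second, the quantity $\sum_\ell h_\ell^{-2+2s}\norm{(1-\projHmOneSZ_{\ell-1})\phi}{-1}^2$ is again ill-defined for a function-valued $h_\ell$, and no comparison with a ``dominating uniform hierarchy'' turns an $\ell$-indexed sum into an $m$-indexed one, since many adaptive levels can sit at the same uniform scale in one region while others are finer elsewhere. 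The paper instead first \emph{localizes}: it splits $\phi_\ell=(\projHmOneSZ_\ell-\projHmOneSZ_{\ell-1})\phi$ into two parts, solves local Neumann problems on coarse elements and on vertex patches, and pushes the resulting gradients through the Raviart--Thomas projector of Lemma~\ref{lem:Hdivproj} to obtain $\phi_{\ell,E}$ supported on $\widetilde\EE_\ell$ with the bound of Lemma~\ref{lem:stability:splittingPhiEll}. Only after this localization does it regroup the weighted $L^2$-terms elementwise by uniform scale (Lemma~\ref{lem:meshprop}) and invoke Theorem~\ref{thm:normequivInterpolation} on the uniform hierarchy. Your sketch is missing both ingredients --- the local-problem construction that confines the splitting to $\widetilde\EE_\ell$, and the scale-based (rather than level-based) regrouping that makes the uniform-mesh tools applicable.
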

That $\XX_L$ is a decomposition of $\PP^0(\TT_L)$ follows from the proof of the lower bound given in Section~\ref{sec:proof:main:lower:uniform} (uniform meshes) resp. Section~\ref{sec:proof:main:lower:adaptive} (adaptive meshes).
The upper bound is shown in Section~\ref{sec:proof:main:upper:uniform} (uniform meshes) resp. Section~\ref{sec:proof:main:upper:adaptive} (adaptive meshes).
The proofs in the case of adaptive meshes rely on several localization arguments which are not needed in the case of uniform meshes and are therefore presented in a separate section.

\subsection{Multilevel decomposition for $\widetilde H^{-s}(\Omega)$}
We consider the collection
\begin{align*}
  \widetilde\XX_L = \{\XX_\Omega\} \cup \set{\XX_{\ell,E}}{E\in\widetilde\EE_\ell^\Omega, \, \ell=0,\dots,L},
\end{align*}
where $\XX_\Omega = \linhull\{1\}$ and set $\enorm{\cdot}_{-s,\sim} = \enorm{\cdot}_{\widetilde H^{-s}(\Omega),\widetilde\XX_L}$.
Our second main result is
\begin{theorem}\label{thm:main:tilde}
  The set $\widetilde\XX_L$ provides a decomposition of $\PP^0(\TT_L)$, i.e.,
  \begin{align*}
    \PP^0(\TT_L) = \XX_\Omega + \sum_{\ell=0}^L \sum_{E\in\widetilde\EE_\ell^\Omega} \XX_{\ell,E}.
  \end{align*}
  Let $s\in(0,1)$. Then, 
  \begin{align}
    \enorm{\phi}_{-s,\sim} \lesssim \norm{\phi}{-s,\sim} \lesssim \enorm{\phi}_{-s,\sim} \quad\text{for all } \phi\in \PP^0(\TT_L),
  \end{align}
  and the involved constants only depend on $\Omega$, $s$, $d$, the constants from~\ref{ass:mesh:reg}--\ref{ass:mesh:gen}, and $\TT_0$.
\end{theorem}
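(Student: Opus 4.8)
The plan is to mirror the structure used for the non-tilde case (Theorem~\ref{thm:main}) and reduce everything to three building blocks: the interpolation-space norm equivalence of Corollary~\ref{cor:normequiv} for a sequence of \emph{uniform} meshes, the one-level stable splitting in $\widetilde H^{-1}(\Omega)$ from Theorem~\ref{thm:normEquivOnelevel:tilde}, and the local projection operator $\widetilde\projHmOneSZ_\TT$ of Theorem~\ref{thm:projTildeSZ}. For uniform meshes the decomposition is immediate since $\widetilde\EE_\ell^\Omega = \EE_\ell^\Omega$, and I would proceed exactly as in Section~\ref{sec:proof:main:lower:uniform}/\ref{sec:proof:main:upper:uniform}: split $\phi\in\PP^0(\TT_L)$ as $\phi = \projLtwo_\Omega\phi + \sum_{\ell=0}^L(\widetilde\projHmOneSZ_\ell-\widetilde\projHmOneSZ_{\ell-1})\phi$, put the constant part into $\XX_\Omega$, and further decompose each level-difference $\phi_\ell:=(\widetilde\projHmOneSZ_\ell-\widetilde\projHmOneSZ_{\ell-1})\phi\in\PP^0(\TT_\ell)$ using the one-level splitting of Theorem~\ref{thm:normEquivOnelevel:tilde} into $\sum_{E\in\EE_\ell^\Omega}\phi_{\ell,E}$ (the one-level $\widetilde H^{-1}$-splitting on $\PP^0(\TT_\ell)/\R$, since $\phi_\ell$ has zero mean once the constant is removed — this matches the $\phi=\phi_0+\phi_*$ observation in Section~\ref{sec:abstract}).

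For the lower bound one estimates $\sum_{\ell,E}\norm{\phi_{\ell,E}}{-s,\sim}^2$ using the inverse estimate $\norm{\psi_{\ell,E}}{-s,\sim}\simeq h_E^s\norm{\psi_{\ell,E}}{}$ (stated just after Lemma~\ref{lem:scaling}) to pass to $\sum_\ell h_\ell^{-2+2s}\enorm{\phi_\ell}_{-1,\sim}^2 \simeq \sum_\ell h_\ell^{-2+2s}\norm{\phi_\ell}{-1,\sim}^2$ by Theorem~\ref{thm:normEquivOnelevel:tilde}, and then one recognizes $\norm{\phi_\ell}{-1,\sim}=\norm{(\widetilde\projHmOneSZ_\ell-\widetilde\projHmOneSZ_{\ell-1})\phi}{-1,\sim}\le\norm{(1-\widetilde\projHmOneSZ_{\ell-1})\phi}{-1,\sim}$, which is summable against $h_\ell^{-2+2s}$ by Theorem~\ref{thm:normequivInterpolation} (tilde version) and bounded by $\norm{\phi}{-s,\sim}^2$. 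Adding $\norm{\projLtwo_\Omega\phi}{-1,\sim}^2\lesssim\norm{\phi}{-1,\sim}^2\lesssim\norm{\phi}{-s,\sim}^2$ closes the lower bound. For the upper bound, given any admissible splitting $\phi=\phi_\Omega+\sum_{\ell,E}\phi_{\ell,E}$ with $\phi_{\ell,E}\in\XX_{\ell,E}$, one groups $\widetilde\phi_\ell:=\sum_{E\in\widetilde\EE_\ell^\Omega}\phi_{\ell,E}\in\PP^0(\TT_\ell)$, applies Lemma~\ref{lem:decompositionUniform} (tilde version) to get $\norm{\phi-\phi_\Omega}{-s,\sim}^2\lesssim\sum_\ell h_\ell^{-2+2s}\norm{\widetilde\phi_\ell}{-1,\sim}^2$, bounds each $\norm{\widetilde\phi_\ell}{-1,\sim}^2\lesssim\sum_{E}\norm{\phi_{\ell,E}}{-1,\sim}^2$ by Theorem~\ref{thm:normEquivOnelevel:tilde} (upper direction, i.e.\ $\norm{\cdot}{-1,\sim}\lesssim\enorm{\cdot}_{-1,\sim}$), converts back via the inverse estimate $h_\ell^{-1+s}\norm{\psi_{\ell,E}}{-1,\sim}\simeq h_\ell^s\norm{\psi_{\ell,E}}{}\simeq\norm{\psi_{\ell,E}}{-s,\sim}$, and finally absorbs $\norm{\phi_\Omega}{-s,\sim}\simeq\norm{\phi_\Omega}{-1,\sim}$.

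The adaptive case is handled as in Sections~\ref{sec:proof:main:lower:adaptive}/\ref{sec:proof:main:upper:adaptive}: one compares the locally refined sequence $(\TT_\ell)$ with an auxiliary sequence of uniform meshes $(\widehat\TT_m)$ via the $k_\mathrm{unif}$ property recorded in Section~\ref{sec:mesh}, uses that the new-facet sets $\widetilde\EE_\ell^\Omega$ carry precisely the extra degrees of freedom generated at level $\ell$ ($\#\widetilde\EE_\ell^\Omega=\OO(\#\TT_\ell\setminus\TT_{\ell-1})$), and invokes the local boundedness in the auxiliary norm $\norm{\cdot}{-1,T,\Gamma}$ from Theorem~\ref{thm:projTildeSZ}\ref{prop:projTildeHoneSZ:bound} together with the local inverse estimate $h_T\norm{\phi}{T}\lesssim\norm{\phi}{-1,T,\Gamma}$ stated before Lemma~\ref{lem:projTildeHoneDual}; these are exactly the replacements needed because the plain $\norm{\cdot}{-1,T,\sim}$ local bound fails. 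I expect the main obstacle to be precisely this bookkeeping in the adaptive case: ensuring the localization arguments used for the Dirichlet ($H^{-1}$) problem transfer verbatim once $\norm{\cdot}{-1,\sim}$ is replaced by the hybrid $\norm{\cdot}{-1,T,\Gamma}$ norm — in particular, checking that the constant-mode $\XX_\Omega$ does not interfere with the level-by-level localization and that facets on $\Gamma$ are correctly excluded so that every $\psi_{\ell,E}$ with $E\in\widetilde\EE_\ell^\Omega$ has zero mean, as required for the $\widetilde H^{-1}$ one-level theorem to apply. Since the tilde versions of all the ingredients are asserted in the excerpt, the argument is otherwise a routine transcription of the $H^{-s}$ proof with the obvious modifications.
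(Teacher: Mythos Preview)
Your plan matches the paper's proof, including the adaptive-case ingredients you anticipate: the paper's sketch in Section~\ref{sec:proof:main:tilde:adaptive} uses $\overline\projHone_\ell'$ and $\widetilde\projHmOneSZ_\ell$, replaces the mixed local problems~\eqref{eq:localProblemPhi2} by pure Neumann problems, and swaps Lemma~\ref{lem:sumHmOne} for a $\norm{\cdot}{-1,\omega,\Gamma}$-analogue, exactly as you predict. Two small corrections: in the uniform case the paper uses the \emph{orthogonal} projection $\projTildeHmOne_\ell$ (so Corollary~\ref{cor:normequiv} applies verbatim) rather than $\widetilde\projHmOneSZ_\ell$ --- your choice works too, but you then need to observe that Theorem~\ref{thm:normequivInterpolation} holds for any bounded projection with the approximation property; and your displayed splitting $\phi = \projLtwo_\Omega\phi + \sum_{\ell}(\widetilde\projHmOneSZ_\ell-\widetilde\projHmOneSZ_{\ell-1})\phi$ double-counts the constant, since the telescoping sum already equals $\phi$ --- either split off $\projLtwo_\Omega\phi$ first and telescope $\phi_*$, or (as the paper does) apply the one-level splitting of Theorem~\ref{thm:normEquivOnelevel:tilde} to each $\phi_\ell$ and collect the constant parts $\projLtwo_\Omega\phi_\ell$ a posteriori, which sum to $\projLtwo_\Omega\phi$.
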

The proof in the case of uniform meshes is given in Section~\ref{sec:proof:main:tilde:lower:uniform} and Section~\ref{sec:proof:main:tilde:upper:uniform}.
The proof in the case of adaptive meshes is sketched in Section~\ref{sec:proof:main:tilde:adaptive} since it essentially follows similar steps as in the case of Theorem~\ref{thm:main}.

%%%%%%%%%%%%%%%%%%%%%%%%%
% MULTILEVELNORMS
% %%%%%%%%%%%%%%%%%%%%%%%
\subsection{Multilevel norms for $H^{-s}(\Omega)$ and $\widetilde H^{-s}(\Omega)$}
In this section we present our last two main results dealing with multilevel norms.
%We stress that the multilevel norms can be evaluated in an efficient way and thus provide a method to evaluate negative order Sobolev norms. 

\begin{theorem}\label{thm:multilevelnorm}
  Let $s\in(0,1)$, then, 
  \begin{align}
    \norm{\phi}{-s}^2 \simeq \sum_{\ell=0}^L \norm{h_\ell^s (\projHone_\ell'-\projHone_{\ell-1}')\phi}{}^2 
    \quad\text{for all } \phi \in \PP^0(\TT_L).
  \end{align}
  The involved constants only depend on $\Omega$, $s$, $d$, the constants from~\ref{ass:mesh:reg}--\ref{ass:mesh:gen}, and $\TT_0$.
\end{theorem}

A similar result is valid for the space $\widetilde H^{-1}(\Omega)$:
\begin{theorem}\label{thm:multilevelnorm:tilde}
  Let $s\in(0,1)$, then, 
  \begin{align}
    \norm{\phi}{-s,\sim}^2 \simeq \sum_{\ell=0}^L \norm{h_\ell^s (\overline\projHone_\ell'-\overline\projHone_{\ell-1}')\phi}{}^2 
    \quad\text{for all } \phi \in \PP^0(\TT_L).
  \end{align}
  The involved constants only depend on $\Omega$, $s$, $d$, the constants from~\ref{ass:mesh:reg}--\ref{ass:mesh:gen}, and $\TT_0$.
\end{theorem}

A sketch of the proofs for both Theorem~\ref{thm:multilevelnorm} and Theorem~\ref{thm:multilevelnorm:tilde} is given in Section~\ref{sec:proof:main:norm}.

\begin{remark}\label{rem:extension}
  Some of our results may be extended to $s\in(-1/2,1)$ since $H^{-s}(\Omega)$ can be defined by interpolating $H^{-1}(\Omega)$ and $H^{1/2-\varepsilon}(\Omega)$ with some $\varepsilon>0$, see~\cite{CWHM15} for an overview on interpolation scales.
  In particular, the results from Section~\ref{sec:normsIntSpaces} can be extended to $s\in(-1/2,1)$ as well as our main results from Section~\ref{sec:main} for uniform meshes. 
  Major changes only involve a more general approximation property resp. inverse estimate. For a simpler presentation we consider only $s\in(0,1)$. 
  %Proofs in the case of adaptive meshes need some further modifications.
  %For an overview on interpolation scales in the context of Sobolev spaces we refer to~\cite{CWHM15}. 
\end{remark}

\section{Proof of main results}\label{sec:proof}

\subsection{Proof of lower bound in Theorem~\ref{thm:main} (uniform meshes)}\label{sec:proof:main:lower:uniform}
Let $\phi\in\PP^0(\TT_L)$ be given. 
For $\ell=0,\dots,L$ we define
\begin{align*}
  \phi_\ell := (\projHmOne_\ell-\projHmOne_{\ell-1})\phi\in \PP^0(\TT_\ell) \quad\text{with }\projHmOne_{-1}:= 0.
\end{align*}
We have that $\sum_{\ell=0}^L \phi_\ell = \projHmOne_L\phi =  \phi$.
Also recall that $\EE_\ell = \widetilde\EE_\ell$ for the sequence of uniform meshes. 
By Theorem~\ref{thm:normEquivOnelevel} and~\eqref{eq:observationLowerBound} there exists 
a stable splitting of $\phi_\ell$ into local contributions $\phi_{\ell,E}\in \XX_{\ell,E}$, i.e.,
\begin{align*}
  \phi_\ell = \sum_{E\in\EE_\ell} \phi_{\ell,E} \quad\text{and}\quad
  \sum_{E\in\EE_\ell} \norm{\phi_{\ell,E}}{-1}^2 \lesssim \norm{\phi_\ell}{-1}^2.
\end{align*}
Together with the inverse estimate $\norm{\phi_{\ell,E}}{-s} \lesssim h_\ell^{-1+s}\norm{\phi_{\ell,E}}{-1}$ we get that
\begin{align*}
  \sum_{\ell=0}^L \sum_{E\in\EE_\ell} \norm{\phi_{\ell,E}}{-s}^2 
  \lesssim \sum_{\ell=0}^L h_\ell^{-2+2s} \sum_{E\in\EE_\ell} \norm{\phi_{\ell,E}}{-1}^2
  \lesssim \sum_{\ell=0}^L h_{\ell}^{-2+2s} \norm{\phi_\ell}{-1}^2.
\end{align*}
Corollary~\ref{cor:normequiv} states that the right-hand side is equivalent to $\norm{\phi}{-s}^2$ which finishes the
proof.
\qed

\subsection{Proof of upper bound in Theorem~\ref{thm:main} (uniform meshes)}\label{sec:proof:main:upper:uniform}
Let $\phi_{\ell,E}\in \XX_{\ell,E}$ be given and 
\begin{align*}
  \phi := \sum_{\ell=0}^L \phi_\ell := \sum_{\ell=0}^L \sum_{E\in\EE_\ell} \phi_{\ell,E}.
\end{align*}
From the upper bound in Theorem~\ref{thm:normEquivOnelevel} and~\eqref{eq:observationUpperBound} we infer that
\begin{align*}
  \norm{\phi_\ell}{-1}^2 \lesssim \sum_{E\in\EE_\ell} \norm{\phi_{\ell,E}}{-1}^2.
\end{align*}
Moreover, Lemma~\ref{lem:decompositionUniform} together with the scaling $\norm{\phi_{\ell,E}}{-1}\simeq h_\ell^{1-s}\norm{\phi_{\ell,E}}{-s}$ shows that
%\begin{align*}
%  \norm{\phi}{-s}^2 \lesssim \sum_{\ell=0}^L h_{\ell}^{-2+2s} \norm{\phi_\ell}{-1}^2.
%\end{align*}
%Putting all together and using  we infer that
\begin{align*}
  \norm{\phi}{-s}^2 \lesssim \sum_{\ell=0}^L h_{\ell}^{-2+2s} \norm{\phi_\ell}{-1}^2
  \lesssim \sum_{\ell=0}^L \sum_{E\in\EE_\ell} h_\ell^{-2+2s} \norm{\phi_{\ell,E}}{-1}^2 
  \simeq \sum_{\ell=0}^L \sum_{E\in\EE_\ell} \norm{\phi_{\ell,E}}{-s}^2.
\end{align*}
This concludes the proof of the upper bound in Theorem~\ref{thm:main}.
\qed

\subsection{Proof of lower bound in Theorem~\ref{thm:main:tilde} (uniform meshes)}\label{sec:proof:main:tilde:lower:uniform}
Let $\phi\in\PP^0(\TT_L)$ be given. 
For $\ell=0,\dots,L$ we define
\begin{align*}
  \phi_\ell := (\projTildeHmOne_\ell-\projTildeHmOne_{\ell-1})\phi\in \PP^0(\TT_\ell) \quad\text{with }\projTildeHmOne_{-1}:= 0.
\end{align*}
Recall that $\EE_\ell^\Omega = \widetilde\EE_\ell^\Omega$ for uniform meshes. 
We have that $\sum_{\ell=0}^L \phi_\ell = \projTildeHmOne_L \phi = \phi$.
According to Theorem~\ref{thm:normEquivOnelevel:tilde} and~\eqref{eq:observationLowerBound} we can split each $\phi_\ell$ into 
\begin{align*}
  \phi_\ell = \phi_{\ell,0} +  \sum_{E\in\EE_\ell^\Omega} \phi_{\ell,E} \quad\text{with}\quad
  \norm{\phi_{\ell,0}}{-1,\sim}^2 + \sum_{E\in\EE_\ell^\Omega} \norm{\phi_{\ell,E}}{-1,\sim}^2 \lesssim \norm{\phi_\ell}{-1,\sim}^2.
\end{align*}
We stress that $\phi_{\ell,0} = \projLtwo_\Omega \phi_\ell$ and define $\phi_0 = \sum_{\ell=0}^L \phi_{\ell,0} = \projLtwo_\Omega \phi$.
Moreover, note that $\norm{\projLtwo_\Omega\phi}{-s,\sim}\lesssim \norm{\phi}{-s,\sim}$.
Putting all estimates together and using an inverse inequality we conclude that
\begin{align*}
  \norm{\phi_0}{-s,\sim}^2 + \sum_{\ell=0}^L \sum_{E\in\EE_\ell^\Omega} \norm{\phi_{\ell,E}}{-s,\sim}^2
  &\lesssim \norm{\phi}{-s,\sim}^2 + \sum_{\ell=0}^L h_\ell^{-2+2s} \sum_{E\in\EE_\ell^\Omega} \norm{\phi_{\ell,E}}{-1,\sim}^2 \\
  &\lesssim \norm{\phi}{-s,\sim}^2 + \sum_{\ell=0}^L h_\ell^{-2+2s} \norm{\phi_\ell}{-1,\sim}^2.
\end{align*}
Applying Corollary~\ref{cor:normequiv} finishes the proof.
\qed

\subsection{Proof of upper bound in Theorem~\ref{thm:main:tilde} (uniform meshes)}\label{sec:proof:main:tilde:upper:uniform}
Let $\phi_0\in \XX_\Omega$, $\phi_{\ell,E}\in \XX_{\ell,E}$, $E\in\EE_\ell^\Omega$, $\ell=0,\dots,L$ be given and define
\begin{align*}
  \phi := \phi_0+\sum_{\ell=0}^L \phi_\ell :=  \phi_0 + \sum_{\ell=0}^L \sum_{E\in\EE_\ell^\Omega} \phi_{\ell,E}.
\end{align*}
According to Theorem~\ref{thm:normEquivOnelevel:tilde} and~\eqref{eq:observationUpperBound} we have that
\begin{align*}
  \norm{\phi_\ell}{-1,\sim}^2 \lesssim \sum_{E\in\EE_\ell^\Omega} \norm{\phi_{\ell,E}}{-1,\sim}^2.
\end{align*}
Moreover, Lemma~\ref{lem:decompositionUniform} together with the scaling $\norm{\phi_{\ell,E}}{-1,\sim}\simeq h_\ell^{1-s}\norm{\phi_{\ell,E}}{-s,\sim}$
shows that 
%\begin{align*}
%  \norm{\phi}{-s,\sim}^2 \lesssim \norm{\phi_0}{-s,\sim}^2 + \sum_{\ell=0}^L h_{\ell}^{-2+2s} \norm{\phi_\ell}{-1,\sim}^2.
%\end{align*}
%Putting all together and using the scaling $\norm{\phi_{\ell,E}}{-1,\sim}\simeq h_\ell^{1-s}\norm{\phi_{\ell,E}}{-s,\sim}$
%we infer that
\begin{align*}
  \norm{\phi}{-s,\sim}^2 \lesssim \norm{\phi_0}{-s,\sim}^2 + \sum_{\ell=0}^L h_{\ell}^{-2+2s} \norm{\phi_\ell}{-1,\sim}^2
  &\lesssim \norm{\phi_0}{-s,\sim}^2 +\sum_{\ell=0}^L \sum_{E\in\EE_\ell^\Omega} h_\ell^{-2+2s} \norm{\phi_{\ell,E}}{-1,\sim}^2  \\
  &\lesssim \norm{\phi_0}{-s,\sim}^2 +\sum_{\ell=0}^L \sum_{E\in\EE_\ell^\Omega} \norm{\phi_{\ell,E}}{-s,\sim}^2.
\end{align*}
This concludes the proof of the upper bound in Theorem~\ref{thm:main:tilde}.
\qed

\subsection{Proof of lower bound in Theorem~\ref{thm:main} (adaptive meshes)}\label{sec:proof:main:lower:adaptive}
Let $\phi \in \PP^0(\TT_L)$ and consider the splitting
\begin{align}\label{eq:splittingPhi}
  \sum_{\ell=0}^L \phi_\ell := \sum_{\ell=0}^L (\projHmOneSZ_\ell-\projHmOneSZ_{\ell-1})\phi = \projHmOneSZ_L \phi = \phi.
\end{align}
Throughout we make the convention that operators with negative indices are trivial, i.e., $\projLtwo_{k} := 0$, $\projHone_k':=0$, $\projHmOneSZ_k:=0$ for all $k<0$. 

The next result provides a decomposition of $\phi=\sum_{\ell=0}^L\phi_\ell$ and a stability result:
\begin{lemma}\label{lem:stability:splittingPhiEll}
  There exists a decomposition $\phi = \sum_{\ell=0}^L \sum_{E\in\widetilde\EE_\ell} \phi_{\ell,E}$ with $\phi_{\ell,E}\in \XX_{\ell,E}$, $E\in\widetilde\EE_\ell$, $\ell\in\N_0$ such that
  \begin{align*}
    \sum_{\ell=0}^L\sum_{E\in\widetilde\EE_\ell} \norm{\phi_{\ell,E}}{-s}^2 \lesssim \norm{\projHmOneSZ_0\phi}{-1}^2 + 
    \sum_{\ell=1}^L \Big( \norm{h_\ell^s (\projLtwo_\ell-\projLtwo_{\ell-1})\projHone_\ell'\phi}{}^2
    + \norm{h_\ell^s(\projHone_\ell'-\projHone_{\ell-1}')\phi}{}^2\Big).
  \end{align*}
  The involved constant only depends on $\Omega$, $s$, $d$, the constants from~\ref{ass:mesh:reg}--\ref{ass:mesh:ref}, and $\TT_0$.
\end{lemma}
\begin{proof}
\textbf{Step 1.}
According to Theorem~\ref{thm:normEquivOnelevel} there exist functions $\phi_{0,E}\in\XX_{0,E}$, $E\in\EE_0=\widetilde\EE_0$ such that
\begin{align*}
  \phi_0 = \projHmOneSZ_0\phi =  \sum_{E\in\widetilde\EE_0} \phi_{0,E} \quad\text{and}\quad
  \sum_{E\in\widetilde\EE_0} \norm{\phi_{0,E}}{-1}^2 \lesssim \norm{\phi_0}{-1}^2 = \norm{\projHmOneSZ_0\phi}{-1}^2.
\end{align*}
Using the inverse estimate and $h_E \simeq h_0 \simeq 1$ for all $E\in\widetilde\EE_0$ we get that
\begin{align*}
  \sum_{E\in\widetilde\EE_0} \norm{\phi_{0,E}}{-s}^2 \lesssim \sum_{E\in\widetilde\EE_0} h_E^{-2+2s} \norm{\phi_{0,E}}{-1}^2
  \lesssim \norm{\projHmOneSZ_0\phi}{-1}^2.
\end{align*}

Let $\ell\geq1$.
We split $\phi_\ell$ into two contributions $\phi_{\ell,1}$ and $\phi_{\ell,2}$, 
\begin{align}\label{eq:splittingPhi1Phi2}
  \phi_\ell = \phi_{\ell,1} + \phi_{\ell,2} := 
  \left(\projLtwo_\ell\projHone_\ell'-\projLtwo_{\ell-1}\projHone_\ell'\right)\phi 
  +\left(\projLtwo_{\ell-1}\projHone_\ell'-\projLtwo_{\ell-1}\projHone_{\ell-1}'\right)\phi.
\end{align}
The first term can be localized on each $T\in\TT_{\ell-1}\setminus\TT_\ell$ whereas the second term can be localized using the partition of unity provided by the nodal functions $\set{\eta_{\ell-1,z}}{z\in\NN_{\ell-1}}$.

\noindent
\textbf{Step 2.}
First, we provide a decomposition for $\phi_{\ell,1}$.
To that end let $T\in \TT_{\ell-1}\setminus \TT_\ell$ and consider the local Neumann problem
\begin{subequations}\label{eq:localProblemPhi1}
\begin{alignat}{2}
  \Delta u_T &= \phi_{\ell,1} &\quad& \text{in } T, \\
  \partial_{\normal} u_T &= 0 &\quad& \text{on } \partial T \label{eq:localProblemPhi1:BC}.
\end{alignat}
\end{subequations}
This problem admits a unique solution $u_T\in H_*^1(T) = \set{v\in H^1(T)}{\ip{v}1_T=0}$ since
\begin{align*}
  \ip{\phi_{\ell,1}}1_T &= \ip{(\projLtwo_\ell\projHone_\ell'-\projLtwo_{\ell-1}\projHone_\ell')\phi}1_T
  = \ip{\projHone_\ell'\phi}{(\projLtwo_\ell-\projLtwo_{\ell-1})1}_T = 0.
\end{align*}
From the weak formulation and a Poincar\'e inequality ($\norm{v}{T} \lesssim h_T \norm{\nabla v}T$ for $v\in H_*^1(T)$) we deduce the stability estimate
\begin{align}\label{eq:stability:uT}
  \norm{\nabla u_T}{T} \lesssim h_T \norm{\phi_{\ell,1}}T.
\end{align}
In a further step we use the operator from Lemma~\ref{lem:Hdivproj} and define
\begin{align*}
  \ssigma_T := \begin{cases} 
    \projHdiv_{\TT_\ell(T)}^0\nabla u_T & \text{on } T, \\
    0 & \text{on } \Omega\setminus T.
  \end{cases}
\end{align*}
Here, $\TT_\ell(T) = \set{T'\in\TT_\ell}{T'\subset T}$. Note that by definition of the operator the normal trace of $\ssigma_T$ is zero on $\partial T$ and thus $\ssigma_T \in \Hdivset\Omega$. Furthermore,
\begin{align*}
  \ssigma_T \in \linhull\set{\ppsi_{\ell,E}}{E\in \widetilde\EE_\ell, \, \supp{\psi_{\ell,E}} \subset T}
\end{align*}
and the commutativity property gives
\begin{align*}
  \div\ssigma_T|_T = \div \projHdiv_{\TT_\ell(T)}^0 \nabla u_T = \projLtwo_{\TT_\ell(T)} \phi_{\ell,1} = \phi_{\ell,1}|_T.
\end{align*}
Setting $\ssigma_{\ell,1} := \sum_{T\in\TT_{\ell-1}\setminus\TT_{\ell}} \ssigma_T$ we conclude with the aforementioned properties that there exist coefficients $\alpha_{\ell,E,1}$ such that
\begin{align*}
  \ssigma_{\ell,1} = \sum_{E\in\widetilde\EE_\ell} \alpha_{\ell,E,1} \ppsi_{\ell,E} 
\end{align*}
and
\begin{align*}
  \sum_{E\in\widetilde\EE_\ell} \phi_{\ell,E,1} := \sum_{E\in\widetilde\EE_\ell} \alpha_{\ell,E,1} \div\ppsi_{\ell,E} 
  = \div\ssigma_{\ell,1} = \phi_{\ell,1}.
\end{align*}
Moreover, Lemma~\ref{lem:Hdivproj} and~\eqref{eq:stability:uT} also imply that 
\begin{align*}
  \norm{\projLtwo_{\TT_\ell(T)}\ssigma_T}{T} \lesssim \norm{\ssigma_T}T \lesssim h_T\norm{\phi_{\ell,1}}T.
\end{align*}
The scaling properties $\norm{\phi_{\ell,E,1}}{-s}\simeq h_E^{-1+s}\norm{\phi_{\ell,E,1}}{-1}\simeq  h_E^{-1+s}\norm{\alpha_{\ell,E,1}\ppsi_{\ell,E}}{}$ (Lemma~\ref{lem:scaling}), the $L^2$ stability of the Raviart--Thomas basis (\cite[Proposition~2]{FuehrerHeuerQuasiDiagonal19}) show that
\begin{align*}
  \sum_{E\in\widetilde\EE_\ell} \norm{\phi_{\ell,E,1}}{-s}^2 &\simeq \sum_{E\in\widetilde\EE_\ell} h_E^{-2+2s}\norm{\alpha_{\ell,E,1}\ppsi_{\ell,E}}{}^2
  \simeq \norm{h_\ell^{-1+s}\ssigma_{\ell,1}}{}^2 \lesssim \norm{h_\ell^{s}\phi_{\ell,1}}{}^2.
\end{align*}

\noindent
\textbf{Step 3.}
We define local problems for the second contribution $\phi_{\ell,2}$ in~\eqref{eq:splittingPhi1Phi2}:
To that end we use the partition of unity $1=\sum_{z\in\NN_{\ell-1}}\eta_{\ell-1,z}$ and consider for $z\in\NN_{\ell-1}$ the problem
\begin{subequations}\label{eq:localProblemPhi2}
\begin{alignat}{2}
  \Delta u_z &= \projLtwo_{\ell-1}\eta_{\ell-1,z}(\projHone_\ell'-\projHone_{\ell-1}')\phi &\quad& \text{in } \Patch_{\ell-1}(z), \\
  \partial_{\normal} u_z &= 0 &\quad& \text{on } \partial \Patch_{\ell-1}(z)\setminus \Gamma, \label{eq:localProblemPhi2:BC1} \\
  u_z &= 0 &\quad& \text{on } \partial \Patch_{\ell-1}(z)\cap \Gamma \text{ if } z \in \Gamma. 
\label{eq:localProblemPhi2:BC2}
\end{alignat}
\end{subequations}
For an interior node $z\in \NN_{\ell-1}^\Omega$ we have by Lemma~\ref{lem:projHone} that $(\projHone_\ell-\projHone_{\ell-1})\eta_{\ell-1,z} = 0$ and therefore
\begin{align*}
  \ip{\projLtwo_{\ell-1}\eta_z(\projHone_\ell'-\projHone_{\ell-1}')\phi}1_{\Patch_{\ell-1}(z)}
  &= \ip{(\projHone_\ell'-\projHone_{\ell-1}')\phi}{\eta_{\ell-1,z}}_{\Patch_{\ell-1}(z)} \\
  &= \ip{(\projHone_\ell'-\projHone_{\ell-1}')\phi}{\eta_{\ell-1,z}} \\
  &= \ip{\phi}{(\projHone_\ell-\projHone_{\ell-1})\eta_{\ell-1,z}} = 0.
\end{align*}
Thus, there exists a unique solution $u_z\in H_*^1(\Patch_{\ell-1}(z))$ for all $z\in\NN_{\ell-1}^\Omega$.
For $z\in\NN_{\ell-1}^\Gamma$ the surface measure $|\partial \Patch_{\ell-1}(z)\cap \Gamma|$ is positive (at least one boundary facet is contained in that set).
Thus, there exists a unique solution $u_z \in H_{\Gamma}^1(\Patch_{\ell-1}(z)) := \set{v\in H^1(\Patch_{\ell-1}(z))}{v|_\Gamma = 0}$ if $z\in\NN_{\ell-1}^\Gamma$. 
The weak formulation of~\eqref{eq:localProblemPhi2} and Poincar\'e--Friedrichs' inequalities lead to
\begin{align*}
  \norm{v}{\Patch_{\ell-1}(z)} \lesssim \diam(\Patch_{\ell-1}(z)) \norm{\nabla v}{\Patch_{\ell-1}(z)} 
  \quad\text{for all } v\in H_\Gamma^1(\Patch_{\ell-1}(z)) \text{ with } z\in \NN_{\ell-1}^\Gamma.
\end{align*}
Consequently, the weak formulation of~\eqref{eq:localProblemPhi2}, the latter estimate, local boundedness of $\projLtwo_{\ell-1}$ and $\norm{\eta_{\ell-1,z}}\infty \leq 1$ show that
\begin{align*}
  \norm{\nabla u_z}{\Patch_{\ell-1}(z)} &\lesssim \diam(\Patch_{\ell-1}(z)) 
  \norm{\projLtwo_{\ell-1}\eta_{\ell-1,z}(\projHone_\ell'-\projHone_{\ell-1}')\phi}{\Patch_{\ell-1}(z)}
  \\
  &\leq \diam(\Patch_{\ell-1}(z)) \norm{(\projHone_\ell'-\projHone_{\ell-1}')\phi}{\Patch_{\ell-1}(z)}
\end{align*}
for all $z\in\NN_{\ell-1}$. 
As before we use the operator from Lemma~\ref{lem:Hdivproj} and define
\begin{align*}
  \ssigma_z := \begin{cases} 
    \projHdiv_{\patch_{\ell-1}(z)}^0\nabla u_z & \text{on } \Patch_{\ell-1}(z), \\
    0 & \text{on } \Omega\setminus \Patch_{\ell-1}(z).
  \end{cases}
\end{align*}
Note that by definition of the operator the normal trace of $\ssigma_z$ is zero on all facets $E\in \EE_\ell$ with $E\subset \partial\Patch_{\ell-1}(z)\setminus\Gamma$ 
and thus $\ssigma_z \in \Hdivset\Omega$. 
Furthermore,
\begin{align*}
  \ssigma_z \in \linhull\set{\ppsi_{\ell,E}}{E\in \EE_\ell, \, \supp{\psi_{\ell,E}} \subset \Patch_{\ell-1}(z)}.
\end{align*}
Set $\NN_{\ell-1}' := \set{z\in\NN_{\ell-1}}{\ssigma_z|_{\Patch_{\ell-1}(z)} = 0}$, $\overline\NN_{\ell-1} := \NN_{\ell-1}\setminus \NN_{\ell-1}'$ and
\begin{align*}
  \overline\EE_\ell := \set{E\in\EE_\ell}{\supp(\psi_{\ell,E})\subset \Patch_{\ell-1}(z) \text{ for some } z\in\overline\NN_{\ell-1}}.
\end{align*}

Define $\ssigma_{\ell,2} := \sum_{z\in\overline\NN_{\ell-1}} \ssigma_z =: \sum_{E\in\overline\EE_\ell} \beta_{\ell,E} \ppsi_{\ell,E}$.
The same arguments as in Step~2 then show that $\div\ssigma_{\ell,2} = \phi_{\ell,2}$ and 
\begin{align*}
  \sum_{E\in\overline\EE_\ell} \norm{\beta_{\ell,E}\psi_{\ell,E}}{-s}^2 \lesssim \norm{h_\ell^s(\projHone_\ell'-\projHone_{\ell-1}')\phi}{}^2. 
\end{align*}
Note that in general $\widetilde\EE_\ell\neq \overline\EE_\ell$. To tackle this issue we stress that there exist $\alpha_{\ell,2,E}$ such that
\begin{align*}
  \sum_{\ell=0}^L \sum_{E\in\widetilde\EE_\ell} \phi_{\ell,2,E} := \sum_{\ell=0}^L \sum_{E\in\widetilde\EE_\ell} \alpha_{\ell,2,E} \psi_{\ell,E}
  = \sum_{\ell=1}^L \sum_{E\in\overline\EE_\ell} \beta_{\ell,E} \psi_{\ell,E}.
\end{align*}
To see this note that if $E\in\overline\EE_\ell\setminus\widetilde\EE_\ell$ then there exists $k\leq \ell-1$, $E\in\widetilde\EE_k$ and $\psi_{\ell,E} = \psi_{k,E}$. 
Furthermore, given $E\in\widetilde\EE_\ell$ we have that $\#\set{k\in\N_0}{E\in\overline\EE_k\setminus\widetilde\EE_k}\lesssim 1$. This means that the number of coefficients $\beta_{k,E}$ that contribute to $\alpha_{\ell,2,E}$ is uniformly bounded.
Thus, we have shown that there exist $\phi_{\ell,2,E}\in \XX_{\ell,E}$ with
\begin{align*}
  \sum_{\ell=0}^L \sum_{E\in\widetilde\EE_\ell} \norm{\phi_{\ell,2,E}}{-s}^2 \lesssim \sum_{\ell=1}^L \sum_{E\in\overline\EE_\ell} \norm{\beta_{\ell,E}\psi_{\ell,E}}{-s}^2 
  \lesssim \sum_{\ell=1}^L \norm{h_\ell^s(\projHone_\ell'-\projHone_{\ell-1}')\phi}{}^2.
\end{align*}

\noindent
\textbf{Step 4.}
Combining the results from Step~1--3 above we conclude that there exist $\phi_{\ell,E}\in \XX_{\ell,E}$ for all $E\in\widetilde\EE_\ell$ and $\ell=0,\dots,L$ such that
\begin{align*}
  \phi = \sum_{\ell=0}^L \sum_{E\in\widetilde\EE_\ell} \phi_{\ell,E},
\end{align*}
and
\begin{align*}
  \sum_{\ell=0}^L \sum_{E\in\widetilde\EE_\ell} \norm{\phi_{\ell,E}}{-s}^2
  \lesssim \norm{\phi_0}{-1}^2 + \sum_{\ell=1}^L \Big(\norm{h_\ell^s \phi_{\ell,1}}{}^2 + \norm{h_\ell^s(\projHone_\ell'-\projHone_{\ell-1}')\phi}{}^2\Big),
\end{align*}
which finishes the proof.
\end{proof}

For the remainder of the proof we group the contributions $\phi_{\ell,E}$ that have the same scale (i.e., the supports are comparable).
This allows us to establish a connection to a sequence of uniform refined meshes and, consequently, we can apply Theorem~\ref{thm:normequivInterpolation}. 
The technique of relating uniform and adaptive meshes is quite common and used in, e.g.,~\cite{CHX13,CNX12,CXZ12,HiptmairWuZheng2012,HypSing17}.
To that end we need some further notation: 
The collection of elements in the sequence $(\TT_\ell)_{\ell=0}^L$ is denoted by 
\begin{align*}
  \TT_\mathrm{tot} = \TT_0 \cup \bigcup_{\ell=1}^L \TT_\ell\setminus\TT_{\ell-1}.
\end{align*}

Let $(\widehat\TT_m)_{m\in\N_0}$ denote a sequence of uniform refined meshes with $\widehat\TT_0 = \TT_0$.
All quantities related to $\widehat\TT_m$ will be denoted with $\widehat h_m$, $\widehat\patch_m$, $\projHmOneHat_m$, etc. 
We set
\begin{align*}
  \widehat\TT_\mathrm{tot} = \bigcup_{m\in\N_0} \widehat\TT_m.
\end{align*}

The next result follows from our assumptions on the mesh refinement given in Section~\ref{sec:mesh}.
\begin{lemma}\label{lem:meshprop}
  There exists $\mfun\colon \TT_\mathrm{tot}\to \N_0$, $\tfun \colon \TT_\mathrm{tot}\to \widehat\TT_\mathrm{tot}$ and $k\in\N$ such that
  \begin{enumerate}[label=(\alph*)]
    \item\label{lem:meshprop:a} $h_T \simeq \widehat h_{\mfun(T)}$ for all $T\in\TT_\mathrm{tot}$,
    \item\label{lem:meshprop:d} $\Patch_{\ell-1}^{(2)}(T)\subset \widehat\Patch_{\mfun(T)}^{(k)}(\tfun(T))$ for all $T\in\TT_\ell\setminus\TT_{\ell-1}$ and $\ell\geq 1$,
    \item\label{lem:meshprop:de} $\psi\in\PP^0(\widehat\patch_{\mfun(T)}^{(k)}(\tfun(T)))$ implies that $\psi|_{\Patch_{\ell-1}^{(2)}(T)}\in\PP^0(\patch_{\ell-1}^{(2)}(T))$ for all $T\in\TT_\ell\setminus\TT_{\ell-1}$ and $\ell\geq 1$,
    \item\label{lem:meshprop:e} $\#\set{T\in\TT_\mathrm{tot}}{\mfun(T)=m \text{ and } \tfun(T) = \widehat T} \lesssim 1$ for all $\widehat T \in \widehat\TT_m$ and all $m\in\N_0$.
  \end{enumerate}
  The involved constant only depends on the constants from~\ref{ass:mesh:reg}--\ref{ass:mesh:gen}.
\end{lemma}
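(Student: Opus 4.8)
The plan is to construct $\mfun$ and $\tfun$ directly from Assumptions~\ref{ass:mesh:reg}--\ref{ass:mesh:gen} together with the comparison property relating $(\TT_\ell)$ to a uniform sequence that was recorded right after Lemma~\ref{lem:meshgenunif}. First I would define $\mfun(T)$. Each $T\in\TT_\mathrm{tot}$ carries its generation $\gen(T)$, and by Lemma~\ref{lem:meshgenunif} applied to $(\widehat\TT_m)$ we have $\widehat h_{\widehat T}\simeq q_\mathrm{ref}^m$ for $\widehat T\in\widehat\TT_m$, while Assumption~\ref{ass:mesh:ref} gives $h_T\simeq q_\mathrm{ref}^{\gen(T)}$. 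So the natural choice is to take $\mfun(T)$ to be (roughly) the level $m$ of the uniform sequence at which elements have generation closest to $\gen(T)$; more precisely, using the stated comparison property there is $k_\mathrm{unif}$ such that $T$ refines some $T'\in\widehat\TT_{m'}$ and some $T''\in\widehat\TT_{m''}$ refines $T$ with $|\gen(T')-\gen(T'')|\le k_\mathrm{unif}$, and since $\gen(T')\le\gen(T)\le\gen(T'')$ we may set $\mfun(T):=m'$ (or $m''$). Property~\ref{lem:meshprop:a}, $h_T\simeq\widehat h_{\mfun(T)}$, then follows because $\gen(T)$ and $\gen$ of the elements of $\widehat\TT_{\mfun(T)}$ differ by at most $k_\mathrm{unif}$, so the corresponding mesh sizes are comparable up to a factor $q_\mathrm{ref}^{k_\mathrm{unif}}$.

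Next I would define $\tfun(T)$. Since $T$ is a refinement of the element $T'\in\widehat\TT_{m'}=\widehat\TT_{\mfun(T)}$ from the comparison property, set $\tfun(T):=T'$; this is the (essentially unique, up to the shape-regularity-bounded number of candidates) element of $\widehat\TT_{\mfun(T)}$ containing $T$. For~\ref{lem:meshprop:d} I would argue geometrically: $\overline T\subset\overline{\tfun(T)}$ and $h_T\simeq \widehat h_{\mfun(T)}=:\widehat h$, so the second-order patch $\Patch_{\ell-1}^{(2)}(T)$ has diameter $\lesssim h_{\TT_{\ell-1}}$ near $T$, which by Assumption~\ref{ass:mesh:gen} (bounded generation jump between consecutive meshes) is $\simeq h_T\simeq\widehat h$; hence $\Patch_{\ell-1}^{(2)}(T)$ is contained in a ball of radius $C\widehat h$ around $\tfun(T)$, and by shape-regularity of $\widehat\TT_{\mfun(T)}$ such a ball meets only $\widehat\Patch_{\mfun(T)}^{(k)}(\tfun(T))$ for a fixed $k$ depending only on $d$ and the regularity constants. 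This fixes $k$. Property~\ref{lem:meshprop:de} is then purely combinatorial from~\ref{lem:meshprop:d}: if $\psi$ is constant on each element of $\widehat\patch_{\mfun(T)}^{(k)}(\tfun(T))$ and $\Patch_{\ell-1}^{(2)}(T)$ is covered by (the closures of) such elements, then in particular each $T''\in\patch_{\ell-1}^{(2)}(T)$ lies inside one of them (here one uses that $h_{T''}\simeq h_T\simeq\widehat h$ so $T''$ cannot straddle a coarse-element boundary more than boundedly often — in fact, by~\ref{lem:meshprop:d} applied suitably, $T''$ is contained in a single element of the coarse patch), so $\psi$ is constant on $T''$.

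Finally, for~\ref{lem:meshprop:e} I would count: fix $\widehat T\in\widehat\TT_m$ and consider all $T\in\TT_\mathrm{tot}$ with $\mfun(T)=m$ and $\tfun(T)=\widehat T$. All such $T$ are refinements of $\widehat T$ with $h_T\simeq\widehat h_m\simeq h_{\widehat T}$ by~\ref{lem:meshprop:a}, so they have bounded generation relative to $\widehat T$ (at most $\simeq k_\mathrm{unif}+k_\mathrm{ref}$ extra bisections), and therefore there are only $\OO(1)$ of them contained in $\widehat T$, the bound depending only on $d$, $q_\mathrm{ref}$, $k_\mathrm{ref}$, $k_\mathrm{unif}$. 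I expect the main obstacle to be making the geometric containment in~\ref{lem:meshprop:d} fully rigorous — in particular, carefully passing from "diameters are comparable" to "the fixed-depth coarse patch around $\tfun(T)$ actually contains the fine patch around $T$," which requires combining shape-regularity with the bounded-generation-jump assumption~\ref{ass:mesh:gen} to control how fine-mesh neighbours of $T$ sit relative to the coarse element $\tfun(T)$; once~\ref{lem:meshprop:d} is established, the remaining properties are bookkeeping.
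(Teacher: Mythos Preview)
Your plan is essentially correct and is in fact more detailed than what the paper provides: the paper does not give a proof of this lemma at all, merely stating that ``the next results follow from our assumptions on the mesh refinement given in Section~\ref{sec:mesh}.'' Your construction of $\mfun$ and $\tfun$ via the comparison property with the uniform sequence, together with the generation/mesh-size equivalences from~\ref{ass:mesh:ref} and Lemma~\ref{lem:meshgenunif}, is exactly the intended route.

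One point to tighten is your argument for~\ref{lem:meshprop:de}. You need every element $T''\in\patch_{\ell-1}^{(2)}(T)$ to be contained in a \emph{single} element of $\widehat\TT_{\mfun(T)}$; mere comparability of diameters is not enough, and with your choice $\mfun(T)=m'$ it can happen that the father $T_F\in\TT_{\ell-1}$ of $T$ has $\gen(T_F)<\gen(T')$, so $T_F$ properly contains $T'$ and is not constant under a $\PP^0(\widehat\TT_{m'})$ function. The fix is to choose $\mfun(T)$ a bounded number of levels coarser: by shape regularity and~\ref{ass:mesh:gen} all elements of $\patch_{\ell-1}^{(2)}(T)$ have generation within a fixed constant of $\gen(T)$, so picking $\mfun(T)$ so that the uniform generation is below this range (still within $O(1)$ of $\gen(T)$, preserving~\ref{lem:meshprop:a}) forces every such $T''$ to be a refinement of a uniform element. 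Here you should also make explicit the nesting property of bisection-type refinement (any two simplices arising from $\TT_0$ are either nested or have disjoint interiors), which is what converts ``generation is larger'' into ``is contained in.'' With this adjustment the rest of your bookkeeping for~\ref{lem:meshprop:d} and~\ref{lem:meshprop:e} goes through as you describe.
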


We recall the following summation property, see, e.g.~\cite[Theorem~4.1]{amt99}.
\begin{lemma}\label{lem:sumHmOne}
  For two non-empty disjoint $\omega_1,\omega_2\subset \Omega$ with $\omega = \intr(\overline\omega_1\cup\overline\omega_2)$ and $\psi\in L^2(\omega)$,
  \begin{align*}
    \norm{\psi|_{\omega_1}}{-1,\omega_1}^2 + \norm{\psi|_{\omega_2}}{-1,\omega_2}^2 \leq \norm{\psi}{-1,\omega}^2.
  \end{align*}
\end{lemma}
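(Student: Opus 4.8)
The statement is a localization (super-additivity) property of the $H^{-1}$ norm under splitting the domain: restricting the dual norm to a subdomain can only decrease it, and the contributions from two disjoint subdomains add up below the global norm. The key observation is that a test function supported on one of the pieces, extended by zero, is an admissible test function on the whole domain, and the supremum in the definition of $\norm{\cdot}{-1,\omega}$ over such ``one-sided'' test functions is a restricted supremum.

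The plan is as follows. First, I would recall the variational characterization: there is a unique Riesz representative. Concretely, let $v_i\in H_0^1(\omega_i)$ be the solution of $\ip{\nabla v_i}{\nabla w}_{\omega_i} = \dual{\psi}{w}_{\omega_i}$ for all $w\in H_0^1(\omega_i)$, so that $\norm{\psi|_{\omega_i}}{-1,\omega_i} = \norm{\nabla v_i}{\omega_i}$ (and the supremum in the dual norm is attained at $v_i$). Then I would extend $v_1$ and $v_2$ by zero to the whole of $\omega$; since they vanish on $\partial\omega_i$ (in particular on the interface $\overline\omega_1\cap\overline\omega_2$), the glued function $v := v_1 + v_2$ lies in $H_0^1(\omega)$, with $\norm{\nabla v}{\omega}^2 = \norm{\nabla v_1}{\omega_1}^2 + \norm{\nabla v_2}{\omega_2}^2$ because $\omega_1$ and $\omega_2$ are disjoint. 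Testing the global dual norm with this particular $v$ gives
\begin{align*}
  \norm{\psi}{-1,\omega} \geq \frac{\dual{\psi}{v}_\omega}{\norm{\nabla v}{\omega}}
  = \frac{\norm{\nabla v_1}{\omega_1}^2 + \norm{\nabla v_2}{\omega_2}^2}{\big(\norm{\nabla v_1}{\omega_1}^2 + \norm{\nabla v_2}{\omega_2}^2\big)^{1/2}}
  = \big(\norm{\psi|_{\omega_1}}{-1,\omega_1}^2 + \norm{\psi|_{\omega_2}}{-1,\omega_2}^2\big)^{1/2},
\end{align*}
where I used $\dual{\psi}{v_i}_{\omega_i} = \norm{\nabla v_i}{\omega_i}^2$ by the defining equation for $v_i$. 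Squaring yields the claim.

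The only point requiring a little care — and the closest thing to an obstacle — is the assertion that a zero-extension of an $H_0^1(\omega_i)$ function lands in $H_0^1(\omega)$; this is standard (the extension by zero is a bounded operator $H_0^1(\omega_i)\to H_0^1(\mathbb{R}^d)$, hence into $H_0^1(\omega)$ after restriction), and it is exactly here that the hypotheses $\omega_1\cap\omega_2=\emptyset$ and $\omega=\intr(\overline\omega_1\cup\overline\omega_2)$ enter, ensuring the two glued pieces do not overlap and together fill $\omega$ up to a null set. An alternative, essentially equivalent, route avoids choosing Riesz representatives: for any admissible $w_i\in H_0^1(\omega_i)$, glue $w_1+w_2\in H_0^1(\omega)$ and compute the Rayleigh-type quotient $\dual{\psi}{w_1+w_2}_\omega/\norm{\nabla(w_1+w_2)}{\omega}$; optimizing the direction $(\lambda_1 w_1,\lambda_2 w_2)$ over $\lambda_1,\lambda_2\in\R$ and then taking suprema over $w_1,w_2$ reproduces $\big(\norm{\psi|_{\omega_1}}{-1,\omega_1}^2+\norm{\psi|_{\omega_2}}{-1,\omega_2}^2\big)^{1/2}$ as a lower bound for $\norm{\psi}{-1,\omega}$. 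I would present the first version as it is cleaner.
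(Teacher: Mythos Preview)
Your proof is correct. The paper does not actually prove this lemma --- it is introduced with ``We recall the following summation property'' and stated without proof --- but your argument via Riesz representatives extended by zero and summed is exactly the technique the paper employs for the closely related $\widetilde H^{-1}$ variant in Section~\ref{sec:proof:main:tilde:adaptive}, so your approach is fully in line with the paper's methods.
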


%Let $R_\ell = \supp{(\projHone_\ell'-\projHone_{\ell-1}')\phi}$,  $\RR_\ell = \set{T\in\TT_\ell}{T\subset R_\ell}$ 
%and $\Patch_{\ell-1,\ell}=\intr( \bigcup_{T\in\TT_\ell\setminus\TT_{\ell-1}} \overline T)$ for $\ell\geq 1$. 
To complete the proof of the lower bound in Theorem~\ref{thm:main} it remains to show that
\begin{align*}
  \sum_{\ell=1}^L \Big(\norm{h_\ell^s(\Pi_\ell^0-\Pi_{\ell-1}^0)\projHone_\ell'\phi}{}^2 + \norm{h_\ell^s(\projHone_\ell'-\projHone_{\ell-1}')\phi}{}^2\Big) \lesssim \norm{\phi}{-s}^2.
\end{align*}

Let $T\in\TT_\ell\setminus\TT_{\ell-1}$. Recall the definitions of $\mfun$, $\tfun$ and $k$ from Lemma~\ref{lem:meshprop}.
Note that $\Patch_{\ell-1}(T)$ contains the father element of $T$. Furthermore, $\projHone_\ell'$ is a local quasi-projection which
satisfies that $\projHone_\ell' \psi|_{\Patch_{\ell-1}(T)} = \psi|_{\Patch_{\ell-1}(T)}$ for $\psi|_{\Patch^{(2)}_{\ell-1}(T)}\in\PP^0(\patch_{\ell-1}^{(2)}(T))$.
Combining these arguments together with an inverse estimate, local boundedness of $\projHone_\ell'$ (Lemma~\ref{lem:projHoneDual}), Lemma~\ref{lem:meshprop} and Lemma~\ref{lem:sumHmOne}, proves that
\begin{align*}
  h_T^s \norm{(\Pi_\ell^0-\Pi_{\ell-1}^0)\projHone_\ell'\phi}T
  &= h_T^s \norm{(\Pi_\ell^0-\Pi_{\ell-1}^0)\projHone_\ell'(\phi-\projHmOneHat_{\mfun(T)}\phi) }T
  \\
  &\leq h_T^s \norm{(\Pi_\ell^0-\Pi_{\ell-1}^0)\projHone_\ell'(\phi-\projHmOneHat_{\mfun(T)}\phi) }{\Patch_{\ell-1}(T)}
  \lesssim h_T^s\norm{\projHone_\ell'(\phi-\projHmOneHat_{\mfun(T)}\phi)}{\Patch_{\ell-1}(T)}
  \\
  & \lesssim h_T^{-1+s} \Big( \sum_{T'\in\TT_\ell, T'\subset \Patch_{\ell-1}(T)}  \norm{\projHone_\ell'(\phi-\projHmOneHat_{\mfun(T)}\phi)}{-1,T'}^2\Big)^{1/2}
  \\
  & \lesssim h_T^{-1+s} \Big( \sum_{T'\in\TT_\ell, T'\subset \Patch_{\ell-1}(T)}  \norm{\phi-\projHmOneHat_{\mfun(T)}\phi}{-1,\Omega_\ell(T')}^2\Big)^{1/2}
  \\
  &\lesssim h_T^{-1+s} \norm{\phi-\projHmOneHat_{\mfun(T)}\phi}{-1,\Patch_{\ell-1}^{(2)}(T)} 
  \lesssim \widehat h_{\mfun(T)}^{-1+s} \norm{\phi-\projHmOneHat_{\mfun(T)}\phi}{-1,\widehat\Patch_{\mfun(T)}^{(k)}(\tfun(T))}.
\end{align*}
Therefore, the latter estimate and Lemma~\ref{lem:meshprop}~\ref{lem:meshprop:e} yield
\begin{align*}
  \sum_{\ell=1}^L \sum_{T\in\TT_\ell\setminus\TT_{\ell-1}} \norm{h_\ell^s (\Pi_\ell^0-\Pi_{\ell-1}^0)\projHone_\ell'\phi}T^2 
  &\lesssim \sum_{T\in\TT_\mathrm{tot}} \widehat h_{\mfun(T)}^{-2+2s} \norm{\phi-\projHmOneHat_{\mfun(T)}\phi}{-1,\widehat\Patch_{\mfun(T)}^{(k)}(\tfun(T))}^2
  \\
  &= \sum_{m=0}^\infty \sum_{\widehat T\in\widehat\TT_m} \sum_{\substack{T\in\TT_\mathrm{tot}\\ \mfun(T)=m, \, \tfun(T)=\widehat T}}
  \widehat h_{\mfun(T)}^{-2+2s} \norm{\phi-\projHmOneHat_{\mfun(T)}\phi}{-1,\widehat\Patch_{\mfun(T)}^{(k)}(\tfun(T))}^2
  \\
  &\lesssim \sum_{m=0}^\infty \sum_{\widehat T\in\widehat\TT_m} 
  \widehat h_{m}^{-2+2s} \norm{\phi-\projHmOneHat_m\phi}{-1,\widehat\Patch_{m}^{(k)}(\widehat T)}^2.
\end{align*}
A standard coloring argument and Lemma~\ref{lem:sumHmOne} show that
\begin{align*}
\sum_{\widehat T\in\widehat\TT_m} 
  \widehat h_{m}^{-2+2s} \norm{\phi-\projHmOneHat_m\phi}{-1,\widehat\Patch_{m}^{(k)}(\widehat T)}^2
  \lesssim \widehat h_m^{-2+2s} \norm{\phi-\projHmOneHat_m\phi}{-1}^2.
\end{align*}
Then, with the norm equivalence from Theorem~\ref{thm:normequivInterpolation} we conclude that
\begin{align*}
  \sum_{\ell=1}^L \norm{h_\ell^s(\Pi_\ell^0-\Pi_{\ell-1}^0)\projHone_\ell'\phi}{}^2
  \lesssim \sum_{m=0}^\infty \widehat h_m^{-2+2s} \norm{\phi-\projHmOneHat_m\phi}{-1}^2 \lesssim \norm{\phi}{-s}^2.
\end{align*}
It remains to prove that
\begin{align*}
  \sum_{\ell=1}^L \norm{h_\ell^s(\projHone_\ell'-\projHone_{\ell-1}')\phi}{}^2 \lesssim \norm{\phi}{-s}^2.
\end{align*}
This estimate follows with similar arguments as given above since $\supp(\projHone_\ell'-\projHone_{\ell-1}')\phi$ is contained in a patch of fixed order around $\TT_\ell\setminus\TT_{\ell-1}$ and $\projHone_\ell'$ resp. $\projHone_{\ell-1}'$ restricted to piecewise constants are local projections.
Thus, for the decomposition from Lemma~\ref{lem:stability:splittingPhiEll} we have proven that
\begin{align*}
  \phi = \sum_{\ell=0}^L \sum_{E\in\widetilde\EE_\ell} \phi_{\ell,E} \quad\text{and}\quad
  \sum_{\ell=0}^L \sum_{E\in\widetilde\EE_\ell} \norm{\phi_{\ell,E}}{-s}^2 \lesssim \norm{\phi_0}{-1}^2 +  \norm{\phi}{-s}^2
  \lesssim \norm{\phi}{-s}^2,
\end{align*}
where in the last estimate we used $\norm{\phi_0}{-1}\lesssim \norm{\phi}{-1}\lesssim \norm{\phi}{-s}$. 
This together with~\eqref{eq:observationLowerBound} finishes the proof of the lower bound in Theorem~\ref{thm:main} for the case of adaptive meshes.
\qed

\subsection{Proof of upper bound in Theorem~\ref{thm:main} (adaptive meshes)}\label{sec:proof:main:upper:adaptive}
Let $\phi_{\ell,E}\in \XX_{\ell,E}$, $E\in\widetilde\EE_\ell$, $\ell=0,\dots,L$ be given and set
\begin{align*}
  \phi := \sum_{\ell=0}^L \sum_{E\in\widetilde\EE_\ell} \phi_{\ell,E}.
\end{align*}
Recall that by~\eqref{eq:observationUpperBound} the proof of the upper bound is equivalent to show that
\begin{align*}
  \norm{\phi}{-s}^2 \lesssim \sum_{\ell=0}^L \sum_{E\in\widetilde\EE_\ell} \norm{\phi_{\ell,E}}{-s}^2.
\end{align*}

The basic idea is to reorder the sum into contributions of the same scale, similar as in Section~\ref{sec:proof:main:lower:adaptive}. 
%We do this with the help of the sequence of uniform meshes $(\widehat\TT_m)_{m\in\N_0}$ with $\widehat\TT_0 = \TT_0$.
Let $(\widehat\TT_m)_{m\in\N_0}$ denote the sequence of uniform meshes with $\widehat\TT_0=\TT_0$.
The next result follows from properties of the mesh refinement given in Section~\ref{sec:mesh}.
\begin{lemma}\label{lem:meshpropUp}
  Consider $\II := \set{(\ell,E)}{E\in\widetilde\EE_\ell, \, \ell=0,\dots,L}$.
  There exists $\mfunUp\colon \II\to \N_0$ such that
  \begin{enumerate}[label=(\alph*)]
    \item $h_E \simeq \widehat h_{\mfunUp(\ell,E)}$ for all $(\ell,E)\in\II$,
    \item $\ppsi_{\ell,E} \in \RT^0(\widehat\TT_{\mfunUp(\ell,E)})$ and $\phi_{\ell,E} \in \PP^0(\widehat\TT_{\mfunUp(\ell,E)})$ for all $(\ell,E)\in\II$.
    \item $\#\set{(\ell,E)\in\II}{\mfunUp(\ell,E)=m \text{ and } \supp(\phi_{\ell,E})\cap T \neq \emptyset}\lesssim 1$ for all $T\in\widehat T_m$ and $m\in\N_0$.
  \end{enumerate}
  The involved constants depend only on the constants from~\ref{ass:mesh:reg}--\ref{ass:mesh:gen}.
\end{lemma}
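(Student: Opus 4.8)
\emph{Approach.} I would construct $\mfunUp$ from the generation function and then reduce the multiplicity estimate~(c) to the disjointness of the sets $\TT_\ell\setminus\TT_{\ell-1}$ together with shape regularity, exactly as one would do for the companion Lemma~\ref{lem:meshprop}.

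\emph{Construction of $\mfunUp$ and proof of (a), (b).} For $(\ell,E)\in\II$ let $T_E^+,T_E^-\in\TT_\ell$ be the elements adjacent to $E$ (with $T_E^-=\emptyset$ if $E\subset\Gamma$). Since $E$ is a full facet of each of them in the conforming mesh $\TT_\ell$, shape regularity~\ref{ass:mesh:reg} gives $h_E\simeq h_{T_E^+}\simeq h_{T_E^-}$, and together with~\ref{ass:mesh:ref} this yields $|\gen(T_E^+)-\gen(T_E^-)|\lesssim 1$. I would then set $\mfunUp(\ell,E)$ to be the smallest $m\in\N_0$ such that the (constant) generation $g_m$ of the elements of $\widehat\TT_m$ satisfies $g_m\ge\max\{\gen(T_E^+),\gen(T_E^-)\}$. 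By~\ref{ass:mesh:gen} consecutive values $g_m$ differ by at most $k_\mathrm{ref}$, so $g_{\mfunUp(\ell,E)}=\gen(T_E^+)+O(1)$, and then Lemma~\ref{lem:meshgenunif} together with~\ref{ass:mesh:ref} gives $\widehat h_{\mfunUp(\ell,E)}\simeq q_\mathrm{ref}^{\gen(T_E^+)}\simeq h_E$, which is~(a) (with $\simeq$ in place of the stated $=$, as in Lemma~\ref{lem:meshprop}\ref{lem:meshprop:a}). For~(b) one has $\gen(T_E^\pm)\le g_m$; for bisection-type refinements the bisection of an element is determined by the element and its history, so each of $T_E^+,T_E^-$ is a union of elements of $\widehat\TT_{\mfunUp(\ell,E)}$ and its boundary is resolved by $\widehat\TT_{\mfunUp(\ell,E)}$. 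Hence $\supp(\ppsi_{\ell,E})=\overline{T_E^+}\cup\overline{T_E^-}$ is a union of elements of $\widehat\TT_{\mfunUp(\ell,E)}$; since $\ppsi_{\ell,E}$ restricted to either $T_E^\pm$ is a Raviart--Thomas polynomial, its restriction to any element of $\widehat\TT_{\mfunUp(\ell,E)}$ lies in $\RT^0$ and its normal trace is continuous across every facet of $\widehat\TT_{\mfunUp(\ell,E)}$, so $\ppsi_{\ell,E}\in\RT^0(\widehat\TT_{\mfunUp(\ell,E)})$, and $\phi_{\ell,E}$, being a multiple of $\div\ppsi_{\ell,E}$, is piecewise constant on $\widehat\TT_{\mfunUp(\ell,E)}$.

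\emph{Proof of (c).} Fix $m$ and $T\in\widehat\TT_m$, and consider $(\ell,E)\in\II$ with $\mfunUp(\ell,E)=m$ and $\supp(\phi_{\ell,E})\cap T\ne\emptyset$. If $\ell\ge1$ then, by the very definition of $\widetilde\EE_\ell$, at least one of the two elements adjacent to $E$ — call it $T_E^\star$, which has $E$ among its $d+1$ facets — must lie in $\TT_\ell\setminus\TT_{\ell-1}$ (either $E$ is a new facet, or a neighbour of $E$ was refined at step $\ell$). Moreover $h_{T_E^\star}\simeq h_E\simeq\widehat h_m$ by~(a), and $\overline{T_E^\star}$ lies in a patch of fixed order around $T$ because $\overline{T_E^\star}\subset\supp(\phi_{\ell,E})$, this set has diameter $\simeq\widehat h_m$, and it meets $T$. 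By the same shape-regularity count as behind Lemma~\ref{lem:meshprop}\ref{lem:meshprop:e}, the number of elements of $\TT_\mathrm{tot}$ of diameter $\simeq\widehat h_m$ meeting such a patch is $\lesssim 1$. Since $\TT_0$ and the sets $\TT_\ell\setminus\TT_{\ell-1}$, $\ell\ge1$, are pairwise disjoint, each such element occurs as $T_E^\star$ for exactly one level $\ell$, and for that $\ell$ the facet $E$ is one of its $d+1$ facets; hence there are $\lesssim 1$ admissible pairs $(\ell,E)$ with $\ell\ge1$. The level $\ell=0$ contributes only $\#\EE_0$ pairs in total, and only for the finitely many $m$ with $g_m=O(1)$, which does not affect the bound. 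This gives~(c).

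\emph{Main obstacle.} The delicate points are the two geometric facts used above: that for bisection-type refinements every mesh element of generation $\le g_m$ is resolved by (a union of elements of) $\widehat\TT_m$ — needed for~(b) — and the local finiteness count near $T\in\widehat\TT_m$ — needed for~(c). Both are routine for newest vertex bisection but genuinely rely on the determinism of the bisection rule and on~\ref{ass:mesh:reg}--\ref{ass:mesh:gen}; a fully formal argument would essentially reprove the analogous statements behind Lemma~\ref{lem:meshprop}, which is why the lemma is phrased as following from the properties of the mesh refinement.
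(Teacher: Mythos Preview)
Your proposal is correct and matches the paper's intent: the paper does not actually prove this lemma, it merely states that it ``follows from properties of the mesh-refinement given in Section~\ref{sec:mesh}'', and your argument is precisely the natural way to cash this out. Your observation that~(a) should read $h_E\simeq\widehat h_{\mfunUp(\ell,E)}$ rather than equality is right and consistent with the companion Lemma~\ref{lem:meshprop}\ref{lem:meshprop:a}; the key step for~(c), namely that every $E\in\widetilde\EE_\ell$ with $\ell\ge1$ has at least one neighbour in $\TT_\ell\setminus\TT_{\ell-1}$, is exactly the reason the index sets $\widetilde\EE_\ell$ are defined as they are, and your reduction to the finiteness count behind Lemma~\ref{lem:meshprop}\ref{lem:meshprop:e} is the intended mechanism.
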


We rewrite $\phi$ as
\begin{align*}
  \phi = \sum_{\ell=0}^L \sum_{E\in\widetilde\EE_\ell} \phi_{\ell,E} = \sum_{m=0}^\infty \sum_{\ell=0}^L \sum_{\substack{E\in\widetilde\EE_\ell\\ \mfunUp(\ell,E)=m}} 
  \phi_{\ell,E} =: \sum_{m=0}^\infty \widehat\phi_m.
\end{align*}
Note that there exists $M\in\N$ with $\widehat\phi_m=0$ for all $m>M$. 
Since $\widehat\phi_m \in \PP^0(\widehat\TT_m)$ we use Lemma~\ref{lem:decompositionUniform} to see that
\begin{align*}
  \norm{\phi}{-s}^2 \lesssim \sum_{m=0}^M \widehat h_m^{-2+2s} \norm{\widehat\phi_m}{-1}^2.
\end{align*}
Then, observe that 
\begin{align*}
  \widehat\phi_m = \sum_{\ell=0}^L \sum_{\substack{E\in\widetilde\EE_\ell \\ \mfunUp(\ell,E)=m}} \phi_{\ell,E} 
  =: \sum_{\ell=0}^L \sum_{\substack{E\in\widetilde\EE_\ell \\ \mfunUp(\ell,E)=m}} \div(\alpha_{\ell,E}\ppsi_{\ell,E})
  = \div\Big( \sum_{\ell=0}^L \sum_{\substack{E\in\widetilde\EE_\ell \\ \mfunUp(\ell,E)=m}} \alpha_{\ell,E}\ppsi_{\ell,E} \Big) =: \div\ppsi_m.
\end{align*}
Using that $\div\colon L^2(\Omega)^d\to H^{-1}(\Omega)$ is a bounded operator we infer that
\begin{align*}
  \widehat h_m^{-2+2s} \norm{\widehat\phi_m}{-1}^2 \leq \widehat h_m^{-2+2s} \norm{\ppsi_m}{}^2 = 
  \sum_{T\in\widehat \TT_m} \widehat h_m^{-2+2s} \norm{\ppsi_m}{T}^2 
\end{align*}
From Lemma~\ref{lem:meshpropUp} we obtain that $\widehat h_m\simeq h_T \simeq h_E$ for all $(\ell,E)$ with $\mfunUp(\ell,E)=m$ and $\supp\phi_{\ell,E} \cap T\neq \emptyset$ as well as
that the number of functions $\phi_{\ell,E}$ with $\mfunUp(\ell,E)=m$ that do not vanish on $T\in\widehat\TT_m$ is uniformly bounded by a constant leading to
\begin{align*}
  \sum_{T\in\widehat \TT_m} \widehat h_m^{-2+2s} \norm{\ppsi_m}{T}^2 \lesssim \sum_{T\in\widehat\TT_m} \sum_{\ell=0}^L \sum_{\substack{E\in\widetilde\EE_\ell \\ \mfunUp(\ell,E)=m}} h_E^{-2+2s} \norm{\alpha_{\ell,E}\ppsi_{\ell,E}}{T}^2 = \sum_{\ell=0}^L \sum_{\substack{E\in\widetilde\EE_\ell \\ \mfunUp(\ell,E)=m}} h_E^{-2+2s} \norm{\alpha_{\ell,E}\ppsi_{\ell,E}}{}^2.
\end{align*}
Recall the scaling $h_E^{-1+s}\norm{\alpha_{\ell,E}\ppsi_{\ell,E}}{} \simeq \norm{\phi_{\ell,E}}{-s}$.
Combining all estimates above we conclude that
\begin{align*}
  \norm{\phi}{-s}^2 &\lesssim \sum_{m=0}^M \widehat h_m^{-2+2s} \norm{\widehat\phi_m}{-1}^2 
  \lesssim \sum_{m=0}^M \sum_{\ell=0}^L \sum_{\substack{E\in\widetilde\EE_\ell \\ \mfunUp(\ell,E)=m}} h_E^{-2+2s} \norm{\alpha_{\ell,E}\ppsi_{\ell,E}}{}^2 
  \\
  &\simeq \sum_{m=0}^M \sum_{\ell=0}^L \sum_{\substack{E\in\widetilde\EE_\ell \\ \mfunUp(\ell,E)=m}} \norm{\phi_{\ell,E}}{-s}^2
  = \sum_{\ell=0}^L \sum_{E\in\widetilde\EE_\ell} \norm{\phi_{\ell,E}}{-s}^2,
\end{align*}
which finishes the proof. \qed

\subsection{Proof of Theorem~\ref{thm:main:tilde} (adaptive meshes)}\label{sec:proof:main:tilde:adaptive}
We only give a sketch of the proof since most arguments are the same as in Section~\ref{sec:proof:main:lower:adaptive} and Section~\ref{sec:proof:main:upper:adaptive}.

For the proof of the upper bound let $\phi_0\in\XX_\Omega$, $\phi_{\ell,E}\in\XX_{\ell,E}$, $E\in\widetilde\EE_\ell^\Omega$, $\ell=0,\dots,L$ be given and 
\begin{align*}
  \phi:= \phi_0+\phi_* := \phi_0 + \sum_{\ell=0}^L \sum_{E\in\widetilde\EE_\ell^\Omega} \phi_{\ell,E}.
\end{align*}
Therefore, 
%\begin{align*}
  $\norm{\phi}{-s,\sim}^2 \lesssim \norm{\phi_0}{-s,\sim}^2 + \norm{\phi_*}{-s,\sim}^2$.
%\end{align*}
The same arguments as in Section~\ref{sec:proof:main:upper:adaptive} show that
\begin{align*}
  \norm{\phi_*}{-s,\sim}^2 \lesssim \sum_{\ell=0}^L \sum_{E\in\widetilde\EE_\ell^\Omega} \norm{\phi_{\ell,E}}{-s,\sim}^2.
\end{align*}
Putting all estimates together this proves the upper bound in Theorem~\ref{thm:main:tilde}.

To see the lower bound let $\phi\in\PP^0(\TT_L)$ be given and set $\phi_0 := \projLtwo_\Omega\phi \in \XX_0$, $\phi_*:=\phi-\phi_0$. Then,
\begin{align*}
  \norm{\phi_0}{-s,\sim}^2 + \norm{\phi_*}{-s,\sim}^2 \lesssim \norm{\phi}{-s,\sim}^2.
\end{align*}
Following the arguments from Lemma~\ref{lem:stability:splittingPhiEll} we deduce that there exist $\phi_{\ell,E}\in\XX_{\ell,E}$  with
\begin{align*}
  \phi_* = \sum_{\ell=0}^L \sum_{E\in\widetilde\EE_\ell^\Omega} \phi_{\ell,E}
\end{align*}
and that
\begin{align}\label{eq:main:tilde:adap}
  \sum_{\ell=0}^L \sum_{E\in\widetilde\EE_\ell^\Omega} \norm{\phi_{\ell,E}}{-s,\sim}^2 \lesssim \norm{\phi_*}{-s,\sim}^2
  + \sum_{\ell=1}^L \Big(\norm{h_\ell^s (\projLtwo_\ell-\projLtwo_{\ell-1})\overline\projHone_\ell'\phi_*}{}^2 
  + \norm{h_\ell^s(\overline\projHone_\ell'-\overline\projHone_{\ell-1}')\phi_*}{}^2\Big).
\end{align}
The major differences are that instead of $\projHone_\ell'$, $\projHmOneSZ_\ell$ we use $\overline\projHone_\ell'$, $\widetilde\projHmOneSZ_\ell$ and instead of the local problem~\eqref{eq:localProblemPhi2} we consider
\begin{alignat*}{2}
  \Delta u_z &= \projLtwo_{\ell-1}\eta_{\ell-1,z}(\overline\projHone_\ell'-\overline\projHone_{\ell-1}')\phi &\quad& \text{in } \Patch_{\ell-1}(z), \\
  \partial_{\normal} u_z &= 0 &\quad& \text{on } \partial \Patch_{\ell-1}(z), 
\end{alignat*}
i.e., pure Neumann boundary conditions. 
Moreover, we replace the definition of $\HdivsetZero{\widetilde\Patch}$ in Lemma~\ref{lem:Hdivproj} with
\begin{align*}
    \HdivsetZero{\widetilde\Patch} := \set{\ttau\in \Hdivset{\widetilde\Patch}}{\ttau\cdot\normal = 0 \text{ on }\partial\widetilde\Patch}.
\end{align*}
It remains to show that
\begin{align*}
  \sum_{\ell=0}^L \sum_{E\in\widetilde\EE_\ell^\Omega} \norm{\phi_{\ell,E}}{-s,\sim}^2 \lesssim \norm{\phi_*}{-s,\sim}^2. 
\end{align*}
Again, following the arguments in Section~\ref{sec:proof:main:lower:adaptive} we obtain the estimate
\begin{align}\label{eq:main:tilde:adap:2}
  \sum_{\ell=1}^L \norm{h_\ell^s (\projLtwo_\ell-\projLtwo_{\ell-1})\overline\projHone_\ell'\phi_*}{}^2 \lesssim  \sum_{m=0}^\infty \widehat h_m^{-2+2s} \norm{(1-\projTildeHmOneHat_\ell)\phi_*}{-1,\sim}^2. 
\end{align}
At this point it is important to note that to get the latter bound we use the following result instead of Lemma~\ref{lem:sumHmOne} combined with the local boundedness of $\overline\projHone_\ell'$ with respect to $\norm\cdot{-1,T,\Gamma}$ (see Lemma~\ref{lem:projTildeHoneDual}).
\begin{lemma}
  Let $\omega_1,\dots,\omega_N \subsetneq \Omega$ denote pairwise disjoint simply connected Lipschitz domains with positive measure and $|\partial\omega_j\setminus\Gamma|>0$.  Then,
  \begin{align*}
    \sum_{j=1}^N \norm{\psi|_{\omega_j}}{-1,\omega_j,\Gamma}^2 \lesssim \norm{\psi}{-1,\sim}^2
    \quad\text{for all } \psi\in L^2(\Omega),
  \end{align*}
  where the involved constant only depends on $\Omega$. 
\end{lemma}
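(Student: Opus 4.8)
The plan is to bound $\norm{\psi}{-1,\sim}$ from below by testing it against a \emph{single} function assembled from the Riesz representatives of the local functionals $\dual{\psi}{\cdot}_{\omega_j}$, exploiting that these representatives extend by zero to $\Omega$ and have pairwise disjoint supports.

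If $|\omega_j|=|\Omega|$ for some $j$, then pairwise disjointness and positivity of measure force $N=1$, and the claim is the trivial identity $\norm{\psi}{-1,\sim}^2\le\norm{\psi}{-1,\sim}^2$. So from now on I assume $|\omega_j|<|\Omega|$ for all $j$, so that $\norm{\psi|_{\omega_j}}{-1,\omega_j,\Gamma}=\sup_{0\neq w\in H_{\Gamma,c}^1(\omega_j)}\frac{\dual{\psi}{w}_{\omega_j}}{\norm{\nabla w}{\omega_j}}$. The hypothesis $|\partial\omega_j\setminus\Gamma|>0$ makes $\norm{\nabla\cdot}{\omega_j}$ a norm on $H_{\Gamma,c}^1(\omega_j)$ (Poincar\'e--Friedrichs), so Lax--Milgram provides a unique $v_j\in H_{\Gamma,c}^1(\omega_j)$ with $\ip{\nabla v_j}{\nabla w}_{\omega_j}=\dual{\psi}{w}_{\omega_j}$ for all admissible $w$; by construction $\norm{\nabla v_j}{\omega_j}=\norm{\psi|_{\omega_j}}{-1,\omega_j,\Gamma}$ and $\dual{\psi}{v_j}_{\omega_j}=\norm{\nabla v_j}{\omega_j}^2$.

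Next I would extend each $v_j$ by zero to $\widetilde v_j$; this lies in $H^1(\Omega)$ because $\omega_j\subset\Omega$ forces the interior boundary portion $\partial\omega_j\cap\Omega$ to coincide with $\partial\omega_j\setminus\Gamma$, where $v_j$ vanishes, and the extension preserves the gradient and $L^2$ norms while keeping $\supp\widetilde v_j\subset\overline\omega_j$. For $v:=\sum_{j=1}^N\widetilde v_j\in H^1(\Omega)$ the disjointness of the $\omega_j$ yields $\norm{\nabla v}{\Omega}^2=\sum_j\norm{\nabla v_j}{\omega_j}^2$, $\norm{v}{\Omega}^2=\sum_j\norm{v_j}{\omega_j}^2$ and $\dual{\psi}{v}_{\Omega}=\sum_j\dual{\psi}{v_j}_{\omega_j}=\norm{\nabla v}{\Omega}^2$. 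Testing the definition of $\norm{\psi}{-1,\sim}$ with $v$ then gives $\norm{\nabla v}{\Omega}^4\le\norm{\psi}{-1,\sim}^2\,(\norm{\nabla v}{\Omega}^2+\norm{v}{\Omega}^2)$.

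The remaining, and in my view decisive, step is to absorb the lower-order term: a Poincar\'e--Friedrichs inequality $\norm{v_j}{\omega_j}\lesssim\norm{\nabla v_j}{\omega_j}$ on each $\omega_j$, \emph{with a constant uniform in $j$}, gives $\norm{v}{\Omega}^2\lesssim\norm{\nabla v}{\Omega}^2$, hence $\norm{\nabla v}{\Omega}^2\lesssim\norm{\psi}{-1,\sim}^2$ (the case $\norm{\nabla v}{\Omega}=0$ being trivial), which is exactly $\sum_j\norm{\psi|_{\omega_j}}{-1,\omega_j,\Gamma}^2\lesssim\norm{\psi}{-1,\sim}^2$. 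The subtlety is the uniformity of this Friedrichs constant: for genuinely arbitrary subdomains it can degenerate (narrow or dumbbell-shaped $\omega_j$ have arbitrarily large constants, so a bound by $\diam(\omega_j)\le\diam(\Omega)$ is not automatic), but in the situation where the lemma is invoked the $\omega_j$ are unions of a uniformly bounded number of shape-regular simplices, for which $\norm{v_j}{\omega_j}\lesssim\diam(\omega_j)\norm{\nabla v_j}{\omega_j}\le\diam(\Omega)\norm{\nabla v_j}{\omega_j}$ holds with a constant depending only on $\Omega$ and the shape-regularity of the mesh. Everything else is bookkeeping of disjoint supports together with a single application of Lax--Milgram.
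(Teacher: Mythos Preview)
Your argument is correct and follows essentially the same route as the paper: take the Riesz representatives $v_j\in H_{\Gamma,c}^1(\omega_j)$, extend by zero, sum, and test $\norm{\psi}{-1,\sim}$ against $v=\sum_j v_j$, using disjointness and a Friedrichs inequality to control $\norm{v}{H^1(\Omega)}$ by $\norm{\nabla v}{}$. You are in fact more careful than the paper on the one genuinely delicate point: the paper simply writes $\norm{v_j}{\omega_j}\le\diam(\omega_j)\norm{\nabla v_j}{}$ without comment, while you correctly flag that such a diameter-scaled Friedrichs bound is not automatic for arbitrary subdomains and is justified here only because, in the intended application, the $\omega_j$ are patches of shape-regular simplices.
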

\begin{proof}
  Let $v_j\in H_{\Gamma,c}^1(\omega_j)$ such that $\norm{\psi|_{\omega_j}}{-1,\omega_j,\Gamma}^2 = \ip{\psi}{v_j}_{\omega_j} = \norm{\nabla v_j}{\omega_j}^2$. Extend $v_j$ on $\Omega\setminus\omega_j$ by $0$ and note that $v_j\in H^1(\Omega)$ and $\norm{v_j}{H^1(\omega_j)}^2 = \norm{\nabla v_j}{\omega_j}^2 + \norm{v_j}{\omega_j}^2 \leq  \norm{\nabla v_j}{}^2 + \diam(\omega_j)^2 \norm{\nabla v_j}{}^2 \leq \norm{\nabla v_j}{}^2 + \diam(\Omega)^2 \norm{\nabla v_j}{}^2 \simeq \norm{\nabla v_j}{}^2$.
  Set $v=\sum_{j=1}^N v_j$ and observe that
\begin{align*}
  \sum_{j=1}^N \norm{\psi|_{\omega_j}}{-1,\omega_j,\Gamma}^2 = \ip{\psi}{v} \leq \norm{\psi}{-1,\sim}\norm{v}{H^1(\Omega)}.
\end{align*}
We have that $\norm{v}{H^1(\Omega)}^2 = \sum_{j=1}^N \norm{v_j}{H^1(\omega_j)}^2 \lesssim \sum_{j=1}^N \norm{\nabla v_j}{\omega_j}^2 = \sum_{j=1}^N \norm{\psi|_{\omega_j}}{-1,\omega_j,\Gamma}^2$ which concludes the proof.
\end{proof}

We continue to estimate~\eqref{eq:main:tilde:adap:2} by applying Theorem~\ref{thm:normequivInterpolation} which leads us to
\begin{align*}
  \sum_{\ell=1}^L \norm{h_\ell^s (\projLtwo_\ell-\projLtwo_{\ell-1})\overline\projHone_\ell'\phi_*}{}^2 \lesssim 
  \sum_{m=0}^\infty \widehat h_m^{-2+2s} \norm{(1-\projTildeHmOneHat_m)\phi_*}{-1,\sim}^2
  \lesssim \norm{\phi_*}{-1,\sim}^2.
\end{align*}
Similar arguments are used to estimate the last term in~\eqref{eq:main:tilde:adap} which concludes the proof.
\qed

\subsection{Proof of Theorem~\ref{thm:multilevelnorm} and Theorem~\ref{thm:multilevelnorm:tilde}}\label{sec:proof:main:norm}
We only give the sketch of the proof of Theorem~\ref{thm:multilevelnorm} as most arguments have already been given in Section~\ref{sec:proof:main:lower:adaptive} and Section~\ref{sec:proof:main:upper:adaptive}.
Moreover, the proof of Theorem~\ref{thm:multilevelnorm:tilde} follows a similar argumentation and is thus omitted. 
Note that in Section~\ref{sec:proof:main:lower:adaptive} we have shown that
\begin{align*}
  \sum_{\ell=1}^L \norm{h_\ell^s(\projHone_\ell'-\projHone_{\ell-1}')\phi}{}^2 \lesssim \norm{\phi}{-s}^2.
\end{align*}
Also note that $\norm{h_0^s\projHone_0'\phi}{} \lesssim \norm{\projHone_0'\phi}{-s} \lesssim \norm{\phi}{-s}$ which proves 
$\sum_{\ell=0}^L \norm{h_\ell^s(\projHone_\ell'-\projHone_{\ell-1}')\phi}{}^2 \lesssim \norm{\phi}{-s}^2$.

To see the other direction we consider
\begin{align}\label{eq:multilevelnorm:splitting}
  \phi = \projHone_L' \phi =  \sum_{\ell=0}^L (\projHone_\ell'-\projHone_{\ell-1}')\phi = 
  \projHone_0'\phi + \sum_{\ell=1}^L (\projHone_\ell'-\projHone_{\ell-1}')\phi =: \phi_0+\phi_1.
\end{align}
Clearly, $\norm{\phi}{-s}^2 \lesssim \norm{\phi_0}{-s}^2 + \norm{\phi_1}{-s}^2$ and $\norm{\phi_0}{-s} \lesssim \norm{\phi_0}{} \simeq \norm{h_0^{s}\phi_0}{}$.
As in Section~\ref{sec:proof:main:lower:adaptive} and Section~\ref{sec:proof:main:upper:adaptive} we use the sequence of uniform meshes $(\widehat\TT_m)_{m\in\N_0}$ with $\widehat\TT_0 =\TT_0$ and note that there exists a function $m'$ with $m'(\ell-1,z) = m$ such that $\diam(\Patch_{\ell-1}(z)) \simeq \widehat h_m$ and $\eta_{\ell-1,z} (\projHone_\ell'-\projHone_{\ell-1}')\phi \in \PP^2(\widehat\TT_m)$.% for all $z\in\NN_{\ell-1}$ and $\ell\geq 1$. 
With the partition of unity, $1=\sum_{z\in\NN_{\ell-1}} \eta_{\ell-1,z}$, we consider
\begin{align*}
  \phi_1 = \sum_{\ell=1}^L (\projHone_\ell'-\projHone_{\ell-1}')\phi &= \sum_{\ell=1}^L \sum_{z\in\NN_{\ell-1}} \eta_{\ell-1,z}(\projHone_\ell'-\projHone_{\ell-1}')\phi \\
  &= \sum_{m=0}^M \sum_{\ell=1}^L \sum_{\substack{z\in\NN_{\ell-1}\\ m'(\ell-1,z)=m}} \eta_{\ell-1,z}(\projHone_\ell'-\projHone_{\ell-1}')\phi
  =: \sum_{m=0}^M \widehat\phi_m
\end{align*}
and note that $\widehat\phi_m\in\PP^2(\widehat\TT_m)$. Thus, we can apply Lemma~\ref{lem:decompositionUniform} which yields that
\begin{align}\label{eq:multilevelnorm:est2}
  \norm{\phi_1}{-s}^2 \lesssim \sum_{m=0}^M \widehat h_m^{-2+2s} \norm{\widehat \phi_m}{-1}^2.
\end{align}
We use the same observations as in Step~3 of the proof of Lemma~\ref{lem:stability:splittingPhiEll}:
For an interior node $z\in\NN_{\ell-1}^\Omega$ we have $\ip{\eta_{\ell-1,z}(\projHone_\ell'-\projHone_{\ell-1}')\phi}{1}=0$ and if $z\in\NN_{\ell-1}^\Gamma$ then $v \in H_0^1(\Omega)$ implies that $\norm{v}{\Patch_{\ell-1}(z)}\lesssim \diam(\Patch_{\ell-1}(z)) \norm{\nabla v}{\Patch_{\ell-1}(z)}$ since at least one facet of an element from $\patch_{\ell-1}(z)$ is a boundary facet. 

Let $v\in H_0^1(\Omega)$. Using the latter observations and similar arguments as in Sections~\ref{sec:proof:main:lower:adaptive}--\ref{sec:proof:main:tilde:adaptive}
we conclude that
\begin{align*}
  \ip{\widehat \phi_m}{v} &= \sum_{\ell=1}^L \sum_{\substack{z\in\NN_{\ell-1}\\ m'(\ell-1,z)=m}} 
  \ip{\eta_{\ell-1,z}(\projHone_\ell'-\projHone_{\ell-1}')\phi}{v}_{\Patch_{\ell-1}(z)} \\
  &\lesssim  \sum_{\ell=1}^L \sum_{\substack{z\in\NN_{\ell-1}, \eta_{\ell-1,z}(\projHone_\ell'-\projHone_{\ell-1}')\phi\neq 0\\ m'(\ell-1,z)=m}} \diam(\Patch_{\ell-1}(z)) 
  \norm{(\projHone_\ell'-\projHone_{\ell-1}')\phi}{\Patch_{\ell-1}(z)} \norm{\nabla v}{\Patch_{\ell-1}(z)} \\
  & \lesssim \Big(\sum_{\ell=1}^L \sum_{\substack{z\in\NN_{\ell-1}\\ m'(\ell-1,z)=m}} \widehat h_m^2
  \norm{(\projHone_\ell'-\projHone_{\ell-1}')\phi}{\Patch_{\ell-1}(z)}^2\Big)^{1/2}\norm{\nabla v}{}.
\end{align*}
Dividing by $\norm{\nabla v}{}$, taking the supremum over $0\neq v\in H_0^1(\Omega)$ together with~\eqref{eq:multilevelnorm:est2} gives
\begin{align*}
  \norm{\phi_1}{-s}^2 \lesssim \sum_{m=0}^M \sum_{\ell=1}^L \sum_{\substack{z\in\NN_{\ell-1}\\ m'(\ell-1,z)=m}} \widehat h_m^{2s} 
  \norm{(\projHone_\ell'-\projHone_{\ell-1}')\phi}{\Patch_{\ell-1}(z)}^2 \lesssim \sum_{\ell=1}^L \norm{h_\ell^s(\projHone_\ell'-\projHone_{\ell-1}')\phi}{}^2
\end{align*}
which finishes the proof.
\qed

\section{Numerical experiments}\label{sec:experiments}
In this section we present some numerical examples to support the results from Theorem~\ref{thm:main} and~\ref{thm:multilevelnorm}. 
All experiments have been realized using \textsc{Matlab} version 2017b on a Linux machine with an Intel i5-2520M processor and $8$ GB RAM. 

Let $\mathrm{conv}(\cdot,\cdot,\cdot)$ denote the convex hull. We consider the initial mesh
\begin{align*}
  \TT_0:= \left\lbrace \mathrm{conv}\big( z_1,z_2,z_5 \big), \,\mathrm{conv}\big( z_2,z_3,z_5 \big), \,\mathrm{conv}\big( z_3,z_4,z_5 \big), \,\mathrm{conv}\big( z_4,z_1,z_5 \big)  \right\rbrace,
\end{align*}
where $z_1,\dots,z_4$ denote the corners of the domain $\Omega = (0,1)^2$ and $z_5 = (\tfrac12,\tfrac12)$.
We consider two sequences of meshes, namely, uniform meshes $(\TT_\ell)_{\ell=0}^L=(\TT_\ell^\mathrm{unif})_{\ell=0}^L$ and locally refined meshes $(\TT_\ell)_{\ell=0}^L=(\TT_\ell^\mathrm{adap})_{\ell=0}^L$. 
The uniform mesh $\TT_{\ell+1}^\mathrm{unif}$ is created by dividing each element of $\TT_\ell^{\mathrm{unif}}$ into four son elements with equal area by applying (iteratively) the newest-vertex bisection rule. 
The adaptive mesh $\TT_{\ell+1}^\mathrm{adap}$ is created by marking all elements in the mesh $\TT_\ell^\mathrm{adap}$ which share the vertex $z_5$ for refinement and then applying the routine given in~\cite[Listing 5.2]{P1AFEM} (which also employs newest-vertex bisection).
%For the finest locally refined mesh considered in this section the ratio of maximum and minimum element diameter is of order $10^6$. 

In Section~\ref{sec:experiments:precond} we present results for the preconditioner induced by the decomposition from Theorem~\ref{thm:main} and in Section~\ref{sec:experiments:norm} we show results corresponding to Theorem~\ref{thm:multilevelnorm}.
In both cases we use a discrete $H^{-s}(\Omega)$ norm which is defined by following the ideas from~\cite{ArioliLoghin09}:
First, let $\Hmat_0$ denote the $L^2(\Omega)$ Riesz matrix with respect to the canonical basis $\chi_{T_1},\dots,\chi_{T_{\#\TT_L}}$ of $\PP^0(\TT_L)$.
Second, let $\Hmat_{-1}$ denote the Riesz matrix of the discrete $H^{-1}(\Omega)$ inner product from~\cite[Section~7.1]{FuehrerHeuerQuasiDiagonal19}, given by
\begin{align*}
  \Hmat_{-1} = \Mmat^\top \Hmat_1^{-1} \Mmat + \beta \widetilde\Hmat_0.
\end{align*}
Here, the entries of the above matrices are given by
\begin{align*}
  \Mmat[j,\ell] := \ip{\chi_{T_\ell}}{\eta_{L,z_j}}, \quad 
  \Hmat_1[j,k] := \ip{\nabla \eta_{L,z_j}}{\nabla \eta_{L,z_k}}, \quad
  \widetilde\Hmat_0[m,\ell]:=\ip{\widetilde h_L^2 \chi_{T_\ell}}{\chi_{T_m}}
\end{align*}
for $j,k=1,\dots,\dim(\cS_0^1(\TT_L))$, $\ell,m=1,\dots,\#\TT_L$ and $\eta_{L,z_j}$ denotes the nodal basis of $\cS_0^1(\TT_L)$. 
Moreover, $\widetilde h_L|_T := |T|^{1/2}$ is an equivalent mesh-size function and $\beta\simeq 1$ can be freely chosen. For the following experiments we choose $\beta=\tfrac15$.
%We refer to~\cite[Section~7.1]{FuehrerHeuerQuasiDiagonal19} for further details and results on the latter matrices. 
Then, by~\cite[Proposition~3.2]{ArioliLoghin09} the matrix
\begin{align*}
  \Hmat_{-s} := \Qmat^\top \Rmat^{1-s} \Qmat
\end{align*}
defines a discrete $H^{-s}(\Omega)$ inner product which is equivalent to $\norm{\cdot}{-s}$, i.e., 
\begin{align*}
  \norm{\phi}{-s,h}^2:=\xx^\top\Hmat_{-s} \xx \simeq \norm{\phi}{-s}^2  \quad\text{for all } \phi \in\PP^0(\TT_L) \text{ with coefficient vector } \xx.
\end{align*}
Here, $\Rmat$ is a diagonal matrix with positive entries and $\Qmat$ is invertible with $\Hmat_{-1}^{-1}\Hmat_0 = \Qmat^{-1}\Rmat\Qmat$, see~\cite[Section~3.1]{ArioliLoghin09} for a detailed description.
We note that the latter equivalence result requires the existence of a projection operator onto $\PP^0(\TT_L)$ which is bounded in $H^{-1}(\Omega)$ and $L^2(\Omega)$, see~\cite[Lemma~2.3]{ArioliLoghin09}. In our situation such an operator is given by $\projHmOneSZ_L$ (Theorem~\ref{thm:projHmOneSZ}).

%%%%%%%%%%%%%%%%%%%%%%%%%%%%%%
% MULTILEVEL PRECONDITIONER
\subsection{Multilevel preconditioner in $H^{-s}(\Omega)$}\label{sec:experiments:precond}
\begin{figure}
  \begin{center}
    \includegraphics[width=0.42\textwidth]{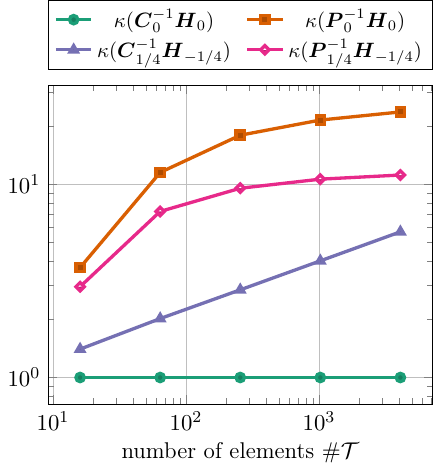}
    \includegraphics[width=0.42\textwidth]{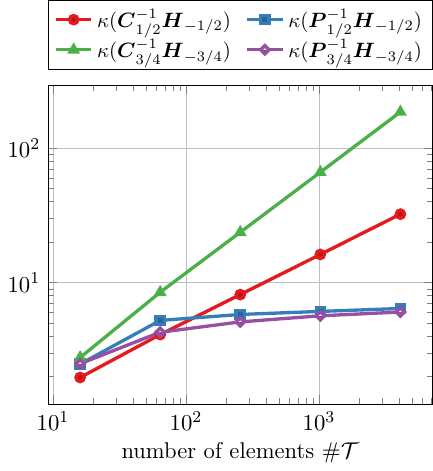}
  \end{center}
  \caption{Condition numbers in the case of uniform meshes for $s=0,\tfrac14$ (left) and $s=\tfrac12,\tfrac34$ (right).}
  \label{fig:unif}
\end{figure}
\begin{figure}
  \begin{center}
    \includegraphics[width=0.42\textwidth]{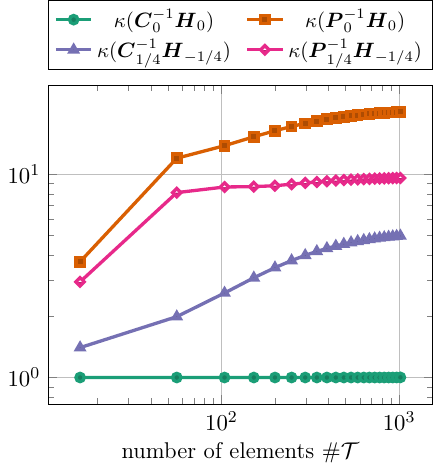}
    \includegraphics[width=0.42\textwidth]{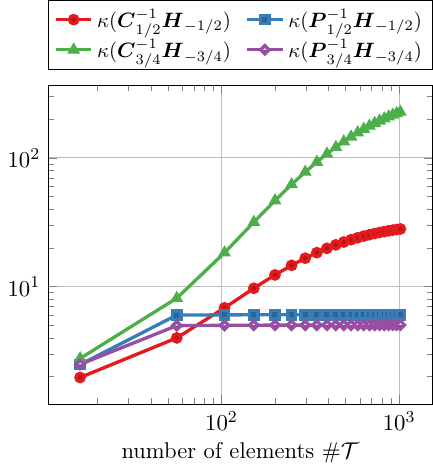}
  \end{center}
  \caption{Condition numbers in the case of adaptive meshes for $s=0,\tfrac14$ (left) and $s=\tfrac12,\tfrac34$ (right).}
  \label{fig:adap}
\end{figure}
We start with a description of the matrix representation of the preconditioner associated to the multilevel decomposition from Theorem~\ref{thm:main}. 
We use similar notations and definitions as in~\cite[Section~3.1]{ABEMsolve}.
Let $\Imat_\ell\in \R^{\#\TT_L\times \#\TT_\ell}$ denote the matrix representation of the embedding $\PP^0(\TT_\ell)\hookrightarrow \PP^0(\TT_L)$ and let $\Tmat_\ell \in\R^{\#\TT_\ell\times\#\EE_\ell}$ denote the representation of the Haar-type functions $\psi_{\ell,E}$, i.e., 
\begin{align*}
  \psi_{\ell,E_j} = \sum_{i=1}^{\#\TT_\ell}\Tmat_\ell[i,j]\chi_{\ell,T_i} \quad\text{for all } E_j\in\EE_\ell.
\end{align*}
Note that $\Tmat_\ell$ is sparse since $\psi_{\ell,E_j}$ is supported on at most two elements. Furthermore, let $\Dmat_{\ell,s} \in \R^{\#\EE_\ell\times \#\EE_\ell}$ denote the diagonal matrix with entries
\begin{align*}
  \Dmat_{\ell,s}[j,k] = \begin{cases}
    \norm{\psi_{\ell,E_j}}{-s,h}^{-2} & \text{if } j=k \text{ and } E_j\in \widetilde\EE_\ell, \\
    0 & \text{else}.
  \end{cases}
\end{align*}
Recall that $\norm{\cdot}{-s,h}$ is the discrete $H^{-s}(\Omega)$ norm induced by the matrix $\Hmat_{-s}$. 
The matrix representation of the multilevel preconditioner then reads
\begin{align*}
  \Pmat_s^{-1} = \sum_{\ell=0}^L \Imat_\ell\Tmat_\ell \Dmat_{\ell,s} \Tmat_\ell^\top \Imat_\ell^\top. 
\end{align*}
It follows from Theorem~\ref{thm:main} and the additive Schwarz theory, see Section~\ref{sec:abstract}, that the spectral condition number of the preconditioned matrix $\Pmat_s^{-1}\Hmat_{-s}$ is uniformly bounded, i.e, 
\begin{align*}
  \kappa(\Pmat_s^{-1}\Hmat_{-s})\lesssim 1.
\end{align*}
In the experiments we also consider the diagonal preconditioner $\Cmat_s^{-1} := \mathrm{diag}(\Hmat_{-s})^{-1}$.
Figure~\ref{fig:unif} and Figure~\ref{fig:adap} show the results for different values of $s$ for uniform and adaptive meshes, respectively. 
Note that the case $s=0$ is not covered in Theorem~\ref{thm:main}, cf. Remark~\ref{rem:extension}.
Although the condition numbers of $\Pmat_s^{-1}\Hmat_{-s}$ for $s=0$, $s=1/4$ are higher than in the case of the diagonally preconditioned matrices it seems that they reach an asymptotic uniform bound supporting the result from Theorem~\ref{thm:main}. 
We stress that in the case of the diagonal preconditioner the condition numbers are of order $\OO( (\#\TT_L)^s)$, see~\cite{amt99} for details on diagonally preconditioned systems for problems in fractional order Sobolev spaces. 
For $s=\tfrac12$ and $s=\tfrac34$ we observe that our proposed preconditioner outperforms the simple diagonal scaling and yields uniformly bounded condition numbers on uniform as well as locally refined meshes.

%%%%%%%%%%%%%%%%%%%%%%%%%%%%%
% MULTILEVEL NORM
\subsection{Multilevel norm in $H^{-s}(\Omega)$}\label{sec:experiments:norm}
\begin{figure}
  \begin{center}
    \includegraphics[width=0.42\textwidth]{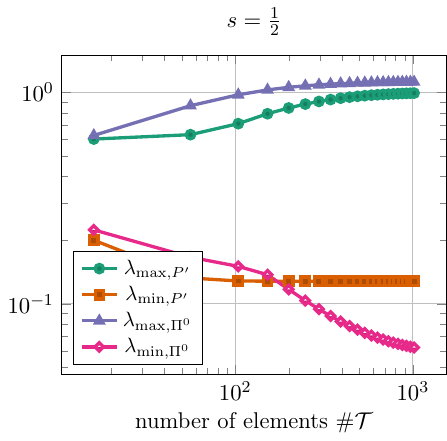}
    \includegraphics[width=0.42\textwidth]{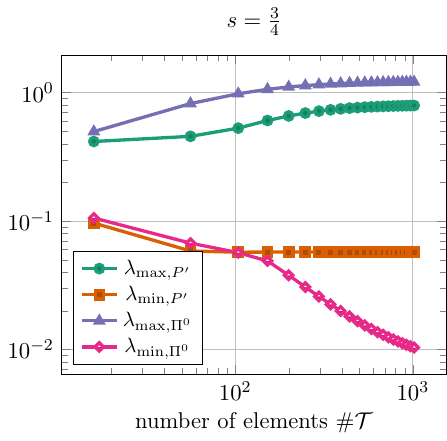}
  \end{center}
  \caption{Squared norm equivalence constants in the case of adaptive meshes for $s=\tfrac12,\tfrac34$ for the multilevel norms from Section~\ref{sec:experiments:norm}.}
  \label{fig:mln}
\end{figure}
Define the matrices $\Bmat_{s,\projLtwo}, \Bmat_{s,\projHone'}\in\R^{\#\TT_L\times\#\TT_L}$ by
\begin{align*}
  \Bmat_{s,\projLtwo}[j,k] &:= \sum_{\ell=0}^L \ip{\widetilde h_\ell^{2s}(\projLtwo_\ell-\projLtwo_{\ell-1})\chi_{T_k}}{(\projLtwo_\ell-\projLtwo_{\ell-1})\chi_{T_j}}, \\
  \Bmat_{s,\projHone'}[j,k] &:= \sum_{\ell=0}^L \ip{\widetilde h_\ell^{2s}(\projHone'_\ell-\projHone'_{\ell-1})\chi_{T_k}}{(\projHone'_\ell-\projHone'_{\ell-1})\chi_{T_j}}
\end{align*}
for all $j,k=1,\dots,\#\TT_L$. %Here, $\widetilde h_\ell|_T := |T|^{1/2}$ for $T\in\TT_\ell$ denotes an equivalent mesh-size function. 
Note that $\Bmat_{s,\projLtwo}$ corresponds to the multilevel norm from~\eqref{eq:mlnL2} and $\Bmat_{s,\projHone'}$ to the one from Theorem~\ref{thm:multilevelnorm}. (The only difference is the use of the equivalent mesh-size function $\widetilde h_\ell$.)
The main idea of the numerical experiments in this section is to compute the optimal constants $\lambda_{\mathrm{min},\star}$, $\lambda_{\mathrm{max},\star}$ for $\star\in\{\projLtwo,\projHone'\}$ that satisfy
\begin{align*}
  \lambda_{\mathrm{min},\star} \xx^\top\Bmat_{s,\star}\xx \leq \xx^\top \Hmat_{-s}\xx \leq \lambda_{\mathrm{max},\star}\xx^\top\Bmat_{s,\star}\xx \quad\text{for all }\xx\in \R^{\#\TT_L}.
\end{align*}
% We stress that these constants are the minimum and maximum eigenvalue of the generalized eigenvalue problem: Find $(\lambda,\xx)\in\R\times \R^{\#\TT_L}$ with
% \begin{align*}
%   \Hmat_{-s}\xx = \lambda \Bmat_{s,\star} \xx.
% \end{align*}
Figure~\ref{fig:mln} shows the results for adaptive meshes and $s=\tfrac12,\tfrac34$. 
Note that we expect that $\lambda_{\mathrm{min},\projLtwo}$ deteriorates, see~\cite[Theorem~2]{Oswald98} and Section~\ref{sec:knownresults:multilevel} for the case of uniform meshes and $s\geq \tfrac12$.
This can be seen in our results also for the adaptive meshes under consideration.
Contrary, Theorem~\ref{thm:multilevelnorm} predicts uniformly bounded $\lambda_{\mathrm{min},\projHone'}$, $\lambda_{\mathrm{max},\projHone'}$ which is also observed in Figure~\ref{fig:mln}.

\section{Concluding remarks}\label{sec:conclusion}
First, all given theorems in Section~\ref{sec:main} are valid if we replace the Lipschitz domain $\Omega$ with a regular manifold $\widetilde\Gamma \subseteq\Gamma = \partial\Omega$.
The proofs are almost identical with some minor modifications. The most notable are the use of Raviart--Thomas surface elements, the definition of the corresponding operator from Lemma~\ref{lem:Hdivproj}, and the use of local Laplace--Beltrami problems in~\eqref{eq:localProblemPhi1} resp.~\eqref{eq:localProblemPhi2}.
In our work~\cite{ABEMsolve} we considered the case of a closed manifold $\widetilde\Gamma$ and proved Theorem~\ref{thm:main:tilde} for $s=1/2$ using different techniques (we constructed extension operators into spaces associated to the volume $\Omega$ similar as in~\cite{HiptmairJerezMao15}). 
Numerical examples in~\cite[Section~4]{ABEMsolve} provide the numerical evidence of the optimality of the preconditioners associated to the multilevel decompositions for the case $s=1/2$. We stress that in~\cite{ABEMsolve} we did not prove optimality for open manifolds but only claimed it~\cite[Remark~4]{ABEMsolve}. Thus, Theorem~\ref{thm:main:tilde} provides the mathematical proof of this claim which is supported by numerical experiments~\cite[Section~4.4]{ABEMsolve}.

Second, concerning implementation of the preconditioners corresponding to Theorem~\ref{thm:main} resp. Theorem~\ref{thm:main:tilde} it is well-known that multilevel decompositions based on one-dimensional subspaces lead to (local) multilevel diagonal scaling preconditioners which are utmost simple to implement. 
Moreover, the preconditioners can be evaluated in $\OO(\#\TT_L)$ operations and the storage requirements are $\OO(\#\TT_L)$ units. 
%Thus, they have optimal computational complexity. 
We refer to~\cite[Section~3]{ABEMsolve} for a short discussion. 

Third, concerning implementation of the multilevel norms from Theorem~\ref{thm:multilevelnorm} resp. Theorem~\ref{thm:multilevelnorm:tilde} we note that the local definition of the involved operators imply that $(\projHone_\ell'-\projHone_{\ell-1}')\phi$ is supported only in a neighborhood of $\TT_\ell\setminus\TT_{\ell-1}$ and therefore the multilevel norms can be evaluated in $\OO(\#\TT_L)$ operations. Contrary to~\cite{ArioliLoghin09} our proposed multilevel norms do not rely on the evaluation of powers of a matrix.

Fourth, the decompositions $\XX_L$ resp.  $\widetilde\XX_L$ in Theorem~\ref{thm:main} resp. Theorem~\ref{thm:main:tilde} can be replaced by
\begin{align*}
  \XX_L &:= \{\PP^0(\TT_0)\} \cup \set{\XX_{\ell,E}}{E\in\widetilde\EE_\ell,\,\ell=1,\dots,L} \quad\text{resp.}\\
  \widetilde\XX_L &:= \{\PP^0(\TT_0)\} \cup \set{\XX_{\ell,E}}{E\in\widetilde\EE_\ell^\Omega,\,\ell=1,\dots,L}.
\end{align*}
Note that the additional space $\PP^0(\TT_0)$ necessitates the inversion of the Riesz matrix corresponding to the inner product $\ip\cdot\cdot_{-s}$ resp. $\ip\cdot\cdot_{-s,\sim}$ or a discrete one on the coarsest level when implementing the preconditioners. However, tighter equivalence constants are expected.

Fifth, decompositions resp. multilevel norms for polynomial discretization spaces of higher order can be handled using the following observations: 
For some fixed $p\geq 1$ consider
\begin{align*}
  \PP^p(\TT_L) = \PP^0(\TT_L) \oplus \PP_*^p(\TT_L)
\end{align*}
where $\PP_*^p(\TT_L)$ is $L^2(\Omega)$ orthogonal to $\PP^0(\TT_L)$. Let $\{\chi_{\widehat T,1},\dots,\chi_{\widehat T,d_p-1}\}$ denote a basis of $\PP_*^p(\widehat T)$ with $\widehat T$ being a reference element and $d_p =\dim(\PP^p(T))$. Using affine transformations this basis defines a basis $\{\chi_{T,1},\dots,\chi_{T,d_p-1}\}$ of $\PP_*^p(T)$ for all $T\in\TT_L$. 
Then, 
\begin{align*}
  \norm{\phi}{-s}^2 \simeq \norm{\phi_0}{-s}^2 + \sum_{T\in\TT_L} \sum_{j=1}^{d_p-1} \norm{\phi_{T,j}}{-s}^2 .
\end{align*}
for $\phi = \phi_0 + \sum_{T\in\TT_L} \sum_{j=1}^{d_p-1} \phi_{T,j} \in\PP^p(\TT_L)$ with $\phi_0\in\PP^0(\TT_L)$ and $\phi_{T,j}\in\linhull\{\chi_{T,j}\}$. 
The case $s=0$ is trivial, the case $s=1$ can be seen from boundedness of $\projLtwo_L$ restricted to polynomials and inverse estimates see, e.g.~\cite[Lemma~9]{FuehrerHeuerQuasiDiagonal19}. The general case $s\in(0,1)$ is derived from the latter two. 
The latter equivalence holds replacing $\norm\cdot{-s}$ with $\norm\cdot{-s,\sim}$, thus, we conclude:
\begin{corollary}
  Let $p\in\N_0$. 
  Theorem~\ref{thm:main} and Theorem~\ref{thm:main:tilde} remain valid if $\PP^0(\TT_L)$ is replaced with $\PP^p(\TT_L)$ and the decompositions $\XX_L$ resp. $\widetilde\XX_L$ are replaced with
  \begin{align*}
    \XX_L^p &:=  \XX_L \cup \set{\linhull\{\chi_{T,j}\}}{j=1,\dots,d_p-1, \, T\in\TT_L}
    \quad\text{resp.}
    \\
    \widetilde\XX_L^p &:= \widetilde\XX_L \cup \set{\linhull\{\chi_{T,j}\}}{j=1,\dots,d_p-1, \, T\in\TT_L}.
  \end{align*}
  The involved constants additionally depend on $p\in\N_0$ and the basis of $\PP_*^p(\widehat T)$.
\end{corollary}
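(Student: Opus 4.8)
The plan is to reduce the claim to Theorem~\ref{thm:main} (resp. Theorem~\ref{thm:main:tilde}) combined with the norm equivalence displayed just above the corollary, which decouples the $H^{-s}(\Omega)$ (resp. $\widetilde H^{-s}(\Omega)$) norm of $\phi\in\PP^p(\TT_L)$ into the norm of its $L^2(\Omega)$-orthogonal projection $\phi_0:=\projLtwo_L\phi\in\PP^0(\TT_L)$ plus the norms of the higher-order pieces $\phi_{T,j}\in\linhull\{\chi_{T,j}\}$.

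First I would check that $\XX_L^p$ is in fact a decomposition of $\PP^p(\TT_L)$: the subspaces $\XX_{\ell,E}$ already span $\PP^0(\TT_L)$ by Theorem~\ref{thm:main}, the spaces $\linhull\{\chi_{T,j}\}$ span $\PP_*^p(\TT_L)$ by construction, and $\PP^p(\TT_L)=\PP^0(\TT_L)\oplus\PP_*^p(\TT_L)$. Next, since this last sum is direct and $L^2(\Omega)$-orthogonal, applying $\projLtwo_L$ to any representation $\phi=\sum_{\ell,E}\phi_{\ell,E}+\sum_{T,j}\phi_{T,j}$ with $\phi_{\ell,E}\in\XX_{\ell,E}\subset\PP^0(\TT_L)$ and $\phi_{T,j}\in\linhull\{\chi_{T,j}\}\subset\PP_*^p(\TT_L)$ shows $\sum_{\ell,E}\phi_{\ell,E}=\phi_0$ and hence $\sum_{T,j}\phi_{T,j}=\phi-\phi_0$; moreover the latter decomposition is unique because $\{\chi_{T,j}\}_{T,j}$ is a basis of $\PP_*^p(\TT_L)$. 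Consequently the additive Schwarz infimum decouples,
\[
  \enorm{\phi}_{H^{-s}(\Omega),\XX_L^p}^2 = \enorm{\phi_0}_{-s}^2 + \sum_{T\in\TT_L}\sum_{j=1}^{d_p-1}\norm{\phi_{T,j}}{-s}^2 ,
\]
with the $\phi_{T,j}$ the (uniquely determined) higher-order components of $\phi$.

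Then I would invoke Theorem~\ref{thm:main} to replace $\enorm{\phi_0}_{-s}^2$ by $\norm{\phi_0}{-s}^2$ up to constants, and the cited norm equivalence to get $\norm{\phi_0}{-s}^2+\sum_{T,j}\norm{\phi_{T,j}}{-s}^2\simeq\norm{\phi}{-s}^2$; chaining the two yields $\enorm{\phi}_{H^{-s}(\Omega),\XX_L^p}\simeq\norm{\phi}{-s}$. The statement for $\widetilde\XX_L^p$ is obtained verbatim, using Theorem~\ref{thm:main:tilde}, the $\norm{\cdot}{-s,\sim}$ version of the splitting, and $\widetilde H^{-s}(\Omega)$ in place of $H^{-s}(\Omega)$ (the extra global space $\XX_\Omega$ appearing in $\widetilde\XX_L^p$ requires no new argument, being already absorbed into Theorem~\ref{thm:main:tilde}). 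The only genuinely non-routine ingredient is the norm equivalence $\norm{\phi}{-s}^2\simeq\norm{\phi_0}{-s}^2+\sum_{T,j}\norm{\phi_{T,j}}{-s}^2$, whose proof was sketched above (trivial for $s=0$; for $s=1$ from $L^2(\Omega)$-boundedness of $\projLtwo_L$ restricted to $\PP^p(\TT_L)$ together with local inverse estimates as in~\cite[Lemma~9]{FuehrerHeuerQuasiDiagonal19}; and for $s\in(0,1)$ by interpolation between these endpoints); everything else is bookkeeping of the additive Schwarz infimum, so no serious obstacle is expected.
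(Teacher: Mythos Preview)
Your proposal is correct and follows exactly the approach the paper intends: the corollary is presented there as an immediate consequence of the norm equivalence $\norm{\phi}{-s}^2 \simeq \norm{\phi_0}{-s}^2 + \sum_{T,j}\norm{\phi_{T,j}}{-s}^2$ combined with Theorem~\ref{thm:main} (resp.~\ref{thm:main:tilde}). Your observation that the additive Schwarz infimum decouples \emph{exactly} (because the $\XX_{\ell,E}$ lie in $\PP^0(\TT_L)$ while the $\linhull\{\chi_{T,j}\}$ lie in the $L^2$-orthogonal complement $\PP_*^p(\TT_L)$, forcing $\sum_{\ell,E}\phi_{\ell,E}=\phi_0$ and uniquely determining the $\phi_{T,j}$) is the key bookkeeping step the paper leaves implicit, and you have it right.
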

Similarly, we adapt the multilevel norms from Theorem~\ref{thm:multilevelnorm} resp.~\ref{thm:multilevelnorm:tilde}:
\begin{corollary}
  Let $p\in\N_0$, $s\in(0,1)$. Then,
  \begin{align*}
    \norm{\phi}{-s}^2 &\simeq \sum_{\ell=0}^L \norm{h_\ell^s(\projHone_\ell'-\projHone_{\ell-1}')\phi_0}{}^2 
    + \sum_{T\in\TT_L} \sum_{j=1}^{d_p-1} h_T^{2s} \norm{\phi_{T,j}}{T}^2, \\
    \norm{\phi}{-s,\sim}^2 &\simeq \sum_{\ell=0}^L \norm{h_\ell^s(\overline\projHone_\ell'-\overline\projHone_{\ell-1}')\phi_0}{}^2 
    + \sum_{T\in\TT_L} \sum_{j=1}^{d_p-1} h_T^{2s} \norm{\phi_{T,j}}{T}^2
  \end{align*}
  for all $\phi := \phi_0 + \phi_1 := \projLtwo_L\phi + (1-\projLtwo_L)\phi\in\PP^p(\TT_L)$ with $(1-\projLtwo_L)\phi = \sum_{T\in\TT_L}\sum_{j=1}^{d_p-1} \phi_{T,j}$.
  The involved constants depend only on $\Omega$, $s$, $d$, $p$, the constants from~\ref{ass:mesh:reg}--\ref{ass:mesh:gen}, $\TT_0$, and the basis of $\PP_*^p(\widehat T)$.
\end{corollary}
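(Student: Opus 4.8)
The plan is to combine three ingredients: the $L^2(\Omega)$-orthogonal splitting $\PP^p(\TT_L)=\PP^0(\TT_L)\oplus\PP_*^p(\TT_L)$ introduced above, the multilevel norms of Theorem~\ref{thm:multilevelnorm} and Theorem~\ref{thm:multilevelnorm:tilde}, and an elementwise scaling argument for the fine-scale functions. Write $\phi=\phi_0+\phi_1$ with $\phi_0:=\projLtwo_L\phi\in\PP^0(\TT_L)$ and $\phi_1:=(1-\projLtwo_L)\phi=\sum_{T\in\TT_L}\sum_{j=1}^{d_p-1}\phi_{T,j}$, where $\phi_{T,j}\in\linhull\{\chi_{T,j}\}$. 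The norm equivalence established above, i.e.\ $\norm{\phi}{-s}^2\simeq\norm{\phi_0}{-s}^2+\sum_{T\in\TT_L}\sum_{j=1}^{d_p-1}\norm{\phi_{T,j}}{-s}^2$ together with its counterpart for $\norm\cdot{-s,\sim}$, reduces the claim to showing (i) $\norm{\phi_0}{-s}^2\simeq\sum_{\ell=0}^L\norm{h_\ell^s(\projHone_\ell'-\projHone_{\ell-1}')\phi_0}{}^2$ and its tilde analogue, and (ii) $\norm{\phi_{T,j}}{-s}^2\simeq h_T^{2s}\norm{\phi_{T,j}}{T}^2\simeq\norm{\phi_{T,j}}{-s,\sim}^2$ for each $T\in\TT_L$ and each $j$.

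Step (i) is immediate: since $\phi_0\in\PP^0(\TT_L)$, Theorem~\ref{thm:multilevelnorm} applies verbatim and yields the required equivalence, and Theorem~\ref{thm:multilevelnorm:tilde} does the same with $\projHone_\ell'$ replaced by $\overline\projHone_\ell'$. No additional work is needed here.

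For step (ii) I would use a local scaling argument. Each $\phi_{T,j}$ is supported on the single element $T$, belongs to $\PP^p(\TT_L)$, and has vanishing integral over $T$ because $\chi_{T,j}$ spans $\PP_*^p(T)$. The lower bound $h_T^s\norm{\phi_{T,j}}{T}\lesssim\norm{\phi_{T,j}}{-s}\le\norm{\phi_{T,j}}{-s,\sim}$ follows directly from the inverse estimate $\norm{h_{\TT_L}^s\psi}{}\lesssim\norm{\psi}{-s}$ for $\psi\in\PP^p(\TT_L)$ recalled in Section~\ref{sec:discretespaces}, together with $\norm{h_{\TT_L}^s\phi_{T,j}}{}=h_T^s\norm{\phi_{T,j}}{T}$. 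For the upper bound, a Poincar\'e--Wirtinger inequality on $T$ and the vanishing mean give, for every admissible test function $v$,
\begin{align*}
  \dual{\phi_{T,j}}{v} = \dual{\phi_{T,j}}{v-\Pi^0_T v}_T \le \norm{\phi_{T,j}}{T}\norm{v-\Pi^0_T v}{T}\lesssim h_T\norm{\phi_{T,j}}{T}\norm{\nabla v}{T},
\end{align*}
whence $\norm{\phi_{T,j}}{-1}\lesssim h_T\norm{\phi_{T,j}}{T}$, and the same computation with $v\in H^1(\Omega)$ yields $\norm{\phi_{T,j}}{-1,\sim}\lesssim h_T\norm{\phi_{T,j}}{T}$. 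Interpolating between $H^{-1}(\Omega)$ (resp.\ $\widetilde H^{-1}(\Omega)$) and $L^2(\Omega)$ via $\norm{\phi_{T,j}}{-s}\lesssim\norm{\phi_{T,j}}{-1}^{s}\norm{\phi_{T,j}}{}^{1-s}$ (resp.\ its tilde version) then gives $\norm{\phi_{T,j}}{-s}\lesssim h_T^s\norm{\phi_{T,j}}{T}$ and $\norm{\phi_{T,j}}{-s,\sim}\lesssim h_T^s\norm{\phi_{T,j}}{T}$. Summing the equivalences from (i) and (ii) over $\ell$, $T$ and $j$ and inserting them into the norm equivalence above completes the proof for $\norm\cdot{-s}$; the same chain of arguments, now invoking Theorem~\ref{thm:multilevelnorm:tilde}, yields the statement for $\norm\cdot{-s,\sim}$. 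The only point that requires genuine care is the two-sided elementwise scaling in (ii), and since all of its ingredients (inverse estimate, Poincar\'e--Wirtinger, Hilbert-space interpolation) are already available, I do not expect a serious obstacle.
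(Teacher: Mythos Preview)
Your proposal is correct and follows essentially the same route as the paper. The paper does not give an explicit proof of this corollary; it simply states it as an immediate consequence of the preceding discussion (the splitting $\PP^p(\TT_L)=\PP^0(\TT_L)\oplus\PP_*^p(\TT_L)$, the equivalence $\norm{\phi}{-s}^2\simeq\norm{\phi_0}{-s}^2+\sum_{T,j}\norm{\phi_{T,j}}{-s}^2$, and Theorems~\ref{thm:multilevelnorm}--\ref{thm:multilevelnorm:tilde}), and your argument spells out precisely these steps, including the elementwise scaling $\norm{\phi_{T,j}}{-s}\simeq h_T^s\norm{\phi_{T,j}}{T}$ via Poincar\'e--Wirtinger plus interpolation.
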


Finally, besides the already mentioned applications in preconditioning, the presented multilevel norms can be used in minimization problems involving negative order Sobolev spaces, which will be reported in future works.
%As an example we mention the inclusion of Neumann-type boundary conditions by adding the $H^{-1/2}$ norm to the least-squares functional of a first-order reformulation of the Poisson problem. This and other applications together with extensive numerical studies will be reported in a future work.

%===================================================================================================
\bibliographystyle{abbrv}
\bibliography{literature}
%===================================================================================================

\end{document}